\newenvironment{enumerate-(a)}{\begin{enumerate}[label={\upshape (\alph*)}, leftmargin=2pc]}{\end{enumerate}}
\newenvironment{enumerate-(A)}{\begin{enumerate}[label={\upshape (\Alph*)}, leftmargin=2pc]}{\end{enumerate}}
\newenvironment{enumerate-(i)}{\begin{enumerate}[label={\upshape (\roman*)}, leftmargin=2pc]}{\end{enumerate}}
\newenvironment{enumerate-(1)}{\begin{enumerate}[label={\upshape (\arabic*)}, leftmargin=2pc]}{\end{enumerate}}
\theoremstyle{plain}
\newtheorem{theorem}{Theorem}[section]
\newtheorem{question}[theorem]{Question}
\newtheorem{proposition}[theorem]{Proposition}
\newtheorem{lemma}[theorem]{Lemma}
\newtheorem{corollary}[theorem]{Corollary}
\newtheorem{claim}{Claim}[theorem]
\theoremstyle{definition}
\theoremstyle{remark}
\newtheorem*{remark*}{Remark}
\newtheorem*{remarks*}{Remarks}
\newtheorem{remarks}[theorem]{Remarks}
\newcommand{\AND}{\mathbin{\, \wedge \,}}
\renewcommand{\implies}{\Rightarrow}	%
\newcommand{\IMPLIES}{\mathbin{\, \Rightarrow \,}}
\renewcommand{\iff}{\mathbin{\Leftrightarrow }}
\newcommand{\IFF}{\mathbin{\, \Leftrightarrow \,}}
\newcommand{\FORALL}[1]{\forall {#1} \, }
\newcommand{\FORALLS}[2]{\forall\sp {#1} {#2} \, }
\newcommand{\EXISTS}[1]{\exists {#1} \, }
\newcommand{\EXISTSONE}[1]{\exists ! {#1} \, }
\newcommand{\EXISTSS}[2]{\exists\sp {#1} {#2} \, }
\newcommand{\R}{\mathbb{R}}	
\newcommand{\Q}{\mathbb{Q}}	
\newcommand{\Mid}{\boldsymbol\mid}		
\newcommand{\equalsdef}{\stackrel{\text{\tiny\rm def}}{=}}	
\newcommand{\setof}[2]{\mathopen \{{#1}\Mid{#2} \mathclose\}} 
\newcommand{\setofLR}[2]{\left \{{#1} \Mid {#2} \right\}} 
\newcommand{\set}[1]{\mathopen \{ {#1} \mathclose \}} 
\newcommand{\setLR}[1]{\left \{ {#1} \right \}} 
\newcommand{\Pow}{\mathscr{P}}		
\newcommand{\seqof}[2]{\mathopen \langle #1 \Mid #2 \mathclose \rangle} 
\newcommand{\seqofLR}[2]{\left \langle #1 \Mid #2 \right \rangle} 
\newcommand{\seq}[1]{\mathopen \langle #1 \mathclose \rangle}	
\newcommand{\seqLR}[1]{\left \langle #1 \right \rangle}	
\newcommand{\setl}[3]{\left\{{#1} \Mid {#2} \vphantom{#3} \right .}
\newcommand{\setr}[3]{\left . \vphantom{#1} \vphantom{#2}{#3}\right\}} 
\newcommand{\eq}[1]{{\boldsymbol [}{#1} {\boldsymbol ]}}	
\newcommand{\symdif}{\mathop{\triangle}}	
\newcommand{\LOC}[2]{{#1}\sb{\lfloor {#2}\rfloor}} 
\DeclareMathOperator{\PrTr}{PrTr} 
\DeclareMathOperator{\Tr}{Tr} 
\DeclareMathOperator{\IF}{IF} 
\DeclareMathOperator{\WF}{WF} 
\newcommand{\Br}[2]{{\text{\rm Br}\sp{#1}\sb{#2}}} 
\newcommand{\pre}[2]{\prescript{#1}{}{#2}}				
\DeclareMathOperator{\dom}{dom} 		
\DeclareMathOperator{\ran}{ran} 		
\newcommand{\conc}{{}\sp \smallfrown}		
\DeclareMathOperator{\lh}{lh}				
\DeclareMathOperator{\zt}{tail}				
\DeclareMathOperator{\hd}{head}				
\newcommand{\LENGTH}{\ell}
\newcommand{\flag}{{\ast}}
\newcommand{\FLAG}{\text{\sc Fl}}
\DeclareMathOperator{\Lev}{Lev}
\newcommand{\ZFC}{\mathsf{ZFC}}
\DeclareMathOperator{\Int}{Int} 		
\DeclareMathOperator{\Cl}{Cl} 		
\DeclareMathOperator{\Fr}{Fr} 		
\newcommand{\Nbhd}{{\boldsymbol N}\!} 
\newcommand{\ocinterval}[2]{\left ( {#1} ; {#2} \right ]}%
\newcommand{\cointerval}[2]{\left [ {#1} ; {#2} \right)}
\newcommand{\bSigma}{\boldsymbol{\Sigma}}
\newcommand{\bPi}{\boldsymbol{\Pi}}
\newcommand{\bGamma}{\boldsymbol{\Gamma}}
\newcommand{\bDelta}{\boldsymbol{\Delta}}
\newcommand{\dual}{\breve}	
\newcommand{\KK}{\mathbf{K}}	
\newcommand{\Gdelta}{\mathbf{G}\sb{\delta}}	
\newcommand{\MGR}{\textrm{\scshape Mgr}} 	
\newcommand{\NULL}{\textrm{\scshape Null}}	
\newcommand{\Bor}{\textrm{\scshape Bor}}	
\newcommand{\MEAS}{\text{\rm\scshape Meas}}	
\newcommand{\MALG}{\text{\rm\scshape Malg}}	
\newcommand{\Baireclass}{\mathscr{B}}
\newcommand{\Solid}{\textrm{\scshape Sld}}
\newcommand{\Dual}{\textrm{\scshape Dl}}
\newcommand{\qDual}{\textrm{\scshape qDl}}
\newcommand{\Spongy}{\textrm{\scshape Spng}}
\newcommand{\card}[1]{\mathopen | #1 \mathclose |}		
\newcommand{\leqW}{\leq\sb{\mathrm{W}}}
\newcommand{\body}[1]{\left [ {#1} \right ]} 
\newcommand{\markdef}[1]{\textbf{#1}}
\newcommand{\cchar}[1]{\boldsymbol{\chi}\sb{#1}}
\newcommand{\decode}[2]{{\boldsymbol (} {#1} {\boldsymbol )}\sb{#2}}
\newcommand{\vsection}[2]{{#1}\sb{( {#2} )}}	
\newcommand{\hsection}[2]{{#1}\sp{( {#2} )}}	
\newcommand{\density}{\mathscr{D}} 
\newcommand{\upperdensity}{\overline{\density}} 
\newcommand{\lowerdensity}{\underline{\density}} 
\newcommand{\oscillation}{\mathscr{O}} 
\DeclareMathOperator{\diam}{diam}	
\DeclareMathOperator{\Diff}{Diff}	
\DeclareMathOperator{\Blur}{Blr}
\DeclareMathOperator{\Sharp}{Shrp}
\DeclareMathOperator{\Exc}{Exc}
\newcommand{\Range}{\textsc{Rng}}
\newcommand{\SHARP}{\textsc{Shrp}}
\newcommand{\BLR}{\textsc{Blr}}
\newcommand{\appl}[2]{{#1}``{#2}}
\renewcommand{\restriction}{\mathpunct{\upharpoonright}}
\DeclareMathOperator{\proj}{p}
\newcommand{\firstreduction}{\mathcal{F}}
\newcommand{\secondreduction}{\mathcal{G}}
\newcommand{\thirdreduction}{\mathcal{H}}
\date{\today} 
\begin{document}
\title[Analytic sets and the density function]{Analytic sets of reals and the density function in the Cantor space}
\author{Alessandro Andretta}
\address{Dipartimento di Matematica, Università di Torino, via Carlo Alberto 10, 10123 Torino---Italy}
\email{alessandro.andretta@unito.it}
\author{Riccardo Camerlo}
\address{Dipartimento di Matematica, Politecnico di Torino, Corso Duca degli Abruzzi 24, 10129 Torino---Italy}
\email{riccardo.camerlo@polito.it}
\subjclass[2010]{03E15, 28A05}
\thanks{The authors would like to thank Vassilios Gregoriades, Alain Louveau, and John Steel for illuminating discussions.}
\begin{abstract}
We study the density function of measurable subsets of the Cantor space.
Among other things, we identify a universal set \( \mathcal{U} \) for \( \bSigma\sp{1}\sb{1} \) subsets of \( ( 0 ; 1 ) \) in terms of the density function; specifically \( \mathcal{U} \) is the set of all pairs \( ( K , r ) \) with \( K \) compact and \( r \in ( 0 ; 1 ) \) being the density of some point with respect to \( K \).
This result yields that the set of all \( K \) such that the range of their density function is \( S \cup \set{ 0 , 1 } \), for some fixed uncountable analytic set \( S \subseteq ( 0 ; 1 ) \), is \( \bPi\sp{1}\sb{2} \)-complete.
\end{abstract}
\maketitle

\section{Statement of the main results}
The density of a measurable set \( A \subseteq \pre{ \omega }{2} \) at a point \( z \in \pre{ \omega }{2} \) is \( \density\sb{A} ( z ) = \lim\sb{n \to \infty} \mu ( A \cap \Nbhd\sb{ z \restriction n } ) / \mu ( \Nbhd\sb{ z \restriction n } ) \), where \( \Nbhd\sb{s} = \setof{ x \in \pre{ \omega }{2} }{ s \subset x } \) is the basic open neighborhood determined by \( s \in \pre{ < \omega }{2} \), and \( \mu \) is the standard coin-tossing measure.
The Lebesgue density theorem says that for almost all \( z \), the value \( \density\sb A ( z ) \) is defined and it is equal to the value \( \cchar{A} ( z ) \), where \( \cchar{A} \) is the characteristic function of \( A \).
Note that \( \density\sb {A\sb 1} = \density\sb {A\sb 2} \iff A\sb 1 =\sb \mu A\sb 2 \), where \( =\sb \mu \) is the equivalence relation defined by \( A\sb 1 =\sb \mu A\sb 2 \iff \mu ( A\sb 1 \symdif A\sb 2 ) = 0 \), and hence \( \density\sb A \) depends only on the equivalence class of \( A \) in the measure algebra \( \MALG \), the collection of all Borel sets modulo \( =\sb \mu \).
The (possibly partial) function \( z \mapsto \density\sb A ( z ) \) is Borel, and \( \ran \density\sb A \) is a \( \bSigma \sp 1\sb 1 \) subset of \( [ 0 ; 1 ] \).

The main result of this paper is that the converse holds: for each analytic set \( S \subseteq ( 0 ; 1 ) \), there is a set \( A \) (which can be taken to be either closed or open) such that \( \ran \density\sb A = S \cup \set{ 0 , 1 } \) (Theorem~\ref{th:Sigma11solid}); moreover, if \( S \) is Borel we can ensure that every value in \( S \) is attained exactly once by the function \( \density\sb A \) (Theorem~\ref{th:Borelsolid}). 
Since \( \KK \), the collection of all compact subsets of \( \pre{\omega}{2} \), is a Polish space, one can try to pin-down the complexity of the families of all \( K \in \KK \) satisfying some specific property.
We show that the set of all compact sets \( K \) such that \( \ran \density\sb K = S \cup \set{ 0 , 1 } \) for some fixed analytic set \( S \), is \( \bPi\sp {1}\sb {2} \)-complete if \( S \) is uncountable, \( 2 \)-\( \bSigma\sp {1}\sb {1} \)-complete if \( S \neq \emptyset \) is countable, and \( \bPi\sp {1}\sb {1} \)-complete if \( S = \emptyset \) (Theorem~\ref{th:range}).
(Here and below \( 2 \)-\( \bGamma = \setof{ A \setminus B }{ A , B \in \bGamma } \), when \( \bGamma \) is a pointclass; alternative notations for this pointclass are \( \Diff\sb 2 \bGamma \) and \( D\sb 2 \bGamma \).)

A point \( z \in \pre{ \omega }{2} \) is blurry for \( A \) if \( \density\sb A ( z ) \) does not exist, and it is sharp if \( \density\sb A ( z ) \) exists and it is an intermediate value between \( 0 \) and \( 1 \).
A set \( A \) is solid if no point is blurry for \( A \), that is if \( \density\sb A ( z ) \) is defined for all \( z \).
(The sets \( A \) in Theorems~\ref{th:Sigma11solid} and~\ref{th:Borelsolid} can be taken to be solid.)
Suppose the value \( \density\sb A ( z ) \) is always \( 0 \) or \( 1 \) (whenever defined): if \( A \) is solid then we say that it is dualistic, otherwise we say that \( A \) is spongy.

We also prove a few results on the complexity of the families of all \( K \in \KK \) that are solid, dualistic, spongy, have a given number of sharp/blurry points, etc.
Theorem~\ref{th:hardness} shows that the set of all \( K \) that are solid (or dualistic, or that have \( n \) points that are sharp or blurry) is \( \bPi\sp {1}\sb {1} \)-complete, the set of all \( K \) whose density function attains exactly \( 1 \leq N \leq \omega \) intermediate values is \( 2 \)-\( \bSigma\sp {1}\sb {1} \)-complete, and the set of all \( K \) that are spongy is \( 2 \)-\( \bSigma\sp {1}\sb {1} \), and it is \( \bSigma\sp {1}\sb {1} \)-hard and \( \bPi\sp {1}\sb {1} \)-hard.
These results are obtained applying certain constructions from trees to compact sets, developed in Section~\ref{sec:constructions}.
These constructions are quite versatile, and could be useful elsewhere.

All these notions (being solid/spongy, having blurry/sharp points, \dots) are invariant under \( =\sb \mu \) so are well-defined in \( \MALG \).
Since \( \MALG \) is a Polish space, one can classify the corresponding sets in the context of the measure algebra.
In fact the above results for \( \KK \) imply analogous results for \( \MALG \): the set of all \( \eq{A} \) which are solid (or dualistic, or has \( n \) points that are sharp or blurry) is \( \bPi\sp {1}\sb {1} \)-complete; the set of all \( \eq{A} \) such that \( ( 0 ; 1 ) \cap \ran \density\sb A \) is a given nonempty countable set, or has size \( n \geq 1 \) is Borel \( 2 \)-\( \bSigma\sp {1}\sb {1} \)-complete; the set of all \( \eq{A} \) such that \( \ran \density\sb A = S \cup \set{ 0 , 1 } \) is Borel \( \bPi\sp {1}\sb {2} \)-complete, whenever \( S \subseteq ( 0 ; 1 ) \) is an uncountable analytic set.

Since the generic element of \( \KK \) is null, all the families of compact sets considered in this paper are meager. 
This should be contrasted with the situation in \( \MALG \), where the generic element is spongy~\cite[]{Andretta:2015kq}.

\section{Notation and preliminaries}
The notation in this paper is standard and follows closely that of~\cite[][]{Kechris:1995kc,Andretta:2013uq,Andretta:2015kq}.
We use \( \appl{f}{A} \) for the point-wise image of \( A \subseteq X \) via \( f \colon X \to Y \), that is \( \setof{ y \in Y }{ \EXISTS{x \in A} ( f ( x ) = y ) } \)---other common notations for this set such as \( f [ A ] \) or \( f ( A ) \) are not suited here, since square brackets are used for the body of a tree, and round brackets could be ambiguous. 
For the effective aspects of descriptive set theory the standard reference is~\cite[][]{Moschovakis:2009fk}.
We also introduce a few technical tools that will come handy.

\subsection{Sequences and trees}\label{subsec:sequences&trees}
\subsubsection{Sequences}
Fix a nonempty set \( I \).
The \markdef{length} of \( x \in \pre{ \leq \omega }{I} \) is the ordinal \( \lh ( x ) = \dom ( x ) \).
The \markdef{concatenation of \( s \in \pre{ < \omega }{I} \) with \( x \in \pre{ \leq \omega }{I} \)} is denoted by \( s \conc x \) and belongs to \( \pre{ \leq \omega }{ I } \).
We will often blur the difference between the sequence \( \seqLR{ i } \) of length \( 1 \) with its unique element \( i \) and write \( t \conc i \) instead of \( t \conc \seq{ i } \).
The sequence of length \( N \leq \omega \) that attains only the value \( i \) is denoted by \( i\sp { ( N ) } \).
If \( A \subseteq \pre{ \omega }{I} \) and \( s \in \pre{ < \omega }{I} \) let \( s \conc A = \setof{ s \conc x }{ x \in A} \subseteq \pre{\omega}{I} \). 

For \( x , y \in \pre{ \omega }{I} \), the element \( x \oplus y \in \pre{ \omega }{I} \) is defined by
\begin{equation}\label{eq:oplus}
( x \oplus y ) ( n ) = 
	\begin{cases}
	x ( k ) & \text{if } n = 2k ,
	\\
	y ( k ) & \text{if } n = 2k+ 1 .
	\end{cases}
\end{equation}
Equation~\eqref{eq:oplus} above defines an operation even when \( x , y \) are finite sequences of the same length \( N < \omega \), so that \( x \oplus y = \seq{ x ( 0 ) , y ( 0 ) , \dots , x ( N - 1 ) , y ( N - 1 ) } \).

Fix a recursive bijection \( J \colon \omega \times \omega \to \omega \). 
Then any element of \( x \in \pre{ \omega }{I} \) encodes an \( \omega \)-sequence \( \seqof{ \decode{ x }{n} }{ n \in \omega } \) of elements of \( \pre{ \omega }{I} \), 
\begin{equation}\label{eq:decode}
 \decode{ x }{n} \colon \omega \to I , \qquad \decode{ x }{n} ( m ) = x ( J ( n , m ) ) .
\end{equation}

For \( s \in \pre{ < \omega }{ 2} \), the longest initial segment of \( s \) ending with a \( 1 \) is
\begin{equation}\label{eq:head}
\hd ( s ) = \begin{cases}
s \restriction n + 1 & n \text{ is the largest \( k \) such that \( s ( k ) = 1 \), if it exists,}
\\
\emptyset & \text{otherwise,}
\end{cases} 
\end{equation}
and the length of the final segment of \( s \) ending with \( 0 \)s is 
\begin{equation}\label{eq:tail}
 \zt ( s ) = \text{the unique \( k \) such that } \bigl ( s = \hd ( s ) \conc 0 \sp { ( k )} \bigr ). 
\end{equation}
The map \( \pre{ < \omega }{ \omega } \to \pre{ < \omega }{2} \), \( t \mapsto \check{t} \) defined by
\begin{equation}\label{eq:check(t)}
 t = \seqLR{ t ( 0 ) , \dots , t ( n ) } \mapsto \check{t} = 0\sp {( t ( 0 ) )} \conc 1 \conc 0\sp {( t ( 1 ) )} \conc 1 \conc \dots \conc 0\sp {( t ( n ) )} \conc 1 
 \end{equation}
is injective and admits a left inverse, 
\begin{equation}\label{eq:hat(t)}
\pre{ < \omega }{ 2} \to \pre{ < \omega }{ \omega } , \quad s \mapsto \hat{ s}
\end{equation}
defined by 
\[
 \hat{ s} = \widehat{ \hd ( s ) } = \text{the unique \( t \) such that } \bigl ( \check{t} = \hd ( s ) \bigr ) .
\]
Note that the range of \( t \mapsto \check{ t} \) is \( \setofLR{ s \in \pre{ < \omega }{ 2} }{ \zt ( s ) = 0 } = \setofLR{ s \in \pre{ < \omega }{ 2} }{ \hd ( s ) = s } \).

For any \( s \in \pre{ < \omega }{2} \), the number of \( 1 \)s in \( s \) is 
\begin{equation}\label{eq:virtuallength}
\LENGTH ( s ) = \card{\setofLR{i < \lh s }{ s ( i ) = 1 } } = \lh ( \hat{s} ) .
\end{equation}
Moreover \( s \) is said to be \markdef{even} or \markdef{odd} depending on the parity of \( \LENGTH ( s ) \).

\subsubsection{Trees}
Let \( I \) be a nonempty set.
The \markdef{downward closure} of \( X \subseteq \pre{ < \omega }{I} \) is 
\[ 
{\downarrow} X \equalsdef \setof{ s \restriction n }{ s \in X \wedge n \in \omega } .
\]
A \markdef{tree} on \( I \) is \( {\downarrow}X \) of some set \( \emptyset \neq X \subseteq \pre{ < \omega }{I} \), and \( \Tr\sb I \) is the set of all trees on \( I \). 
In other words, an element of \( \Tr\sb I \) is a nonempty collection of finite sequences from \( I \), closed under initial segments.
A tree on \( I \times J \) is construed as a set of pairs \( ( u , v ) \) with \( u \in \pre{ < \omega }{I} \), \( v \in \pre{ < \omega }{J} \), and \( \lh u = \lh v \).
If \( T \in \Tr\sb { I \times J } \) and \( y \in \pre{ \omega }{J} \), then
\[
T ( y ) = \setof{ t \in \pre{ < \omega }{I} }{ ( t , y \restriction \lh t ) \in T }
\]
is a tree on \( I \).
A tree \( T \) is \markdef{pruned} if \( \FORALL{t \in T} \EXISTS{s \in T} ( t \subset s ) \) and \( \PrTr \sb I \) is the set of all pruned trees on \( I \).
If \( 0 < \card{ I } \leq \omega \) then \( \card{ \pre{ < \omega }{I} } = \omega \), and therefore \( \Tr\sb I\) and \( \PrTr\sb I\) can be coded as subsets of \( \Pow ( \omega ) \), so with a minor abuse of notation we construe them as subsets of the Cantor space; in fact as subsets of \( \pre{\omega}{2} \), \( \Tr\sb I \) is closed and \( \PrTr\sb I \) is \( \Gdelta \).
For the sake of simplicity, when \( I = \omega \) we write \( \Tr \) and \( \PrTr \) instead of \( \Tr\sb \omega \) and \( \PrTr\sb \omega \).
A tree \( T \) on \( \omega \) is \markdef{gapless} if \( \FORALL{t \in T} \FORALL{n , m }( t \conc \seq{ n } \in T \wedge m \leq n \implies t \conc \seq{ m } \in T ) \).

The \markdef{body} of \( T \in \Tr\sb I\) is \( \body{T} = \setofLR{ x \in \pre{ \omega }{I} }{ \FORALL{n \in \omega } ( x \restriction n \in T ) } \), and its elements are called \markdef{branches} of \( T \).
Thus \( \pre{ \omega }{I} = \body{ \pre{ < \omega }{ I } } \).
The set \( \body{T} \) is a topological space with the topology generated by the sets 
\[ 
\Nbhd\sb t \sp {\body{T}}= \Nbhd\sb t = \setofLR{x \in \body{T} }{ x \supseteq t }
\] 
with \( t \in T \).
This topology is induced by the complete metric 
\[ 
d\sb T ( x , y ) = 
	\begin{cases}
	2\sp {-n} & \text{if \( n \) is least such that } x ( n ) \neq y ( n ) , 
	\\ 0 & \text{if } x = y .
	\end{cases}
\]
Every nonempty closed subset of \( \pre{ \omega }{I} \) is of the form \( \body{T} \) for some unique \( T \in \PrTr\sb I \).

A function \( \varphi \colon T \to S \) between pruned trees is
\begin{itemize}
\item
\markdef{monotone} if \( t\sb 1 \subseteq t\sb 2 \implies \varphi ( t\sb 1 ) \subseteq \varphi ( t\sb 2 ) \),
\item
\markdef{continuous} if it is monotone and \( \lim\sb n \lh \varphi ( x \restriction n ) = \infty \) for all \( x \in \body{T} \). 
\end{itemize}
A continuous \( \varphi \) induces a continuous function
\[ 
f \sb \varphi \colon \body{T} \to \body{S} , \quad f ( x ) = \textstyle\bigcup\sb {n} \varphi ( x \restriction n ) ,
\] 
and every continuous function \( \body{T} \to \body{S} \) arises this way.
(In Section~\ref{subsubsec:continuousfunctionsfromBaire} this construction will be extended to continuous functions from \( \body{T} \) to \( [ 0 ; 1 ] \).)

The \markdef{localization} of \( X \subseteq \pre{ \leq \omega }{I} \) at \( s \in \pre{ < \omega }{I} \) is
\[
\LOC{X}{s} = \setofLR{ t \in \pre{ \leq \omega }{I } }{ s \conc t \in X } . 
\]
Thus if \( A \subseteq \pre{\omega}{I} \) then \( s \conc \LOC{A}{s} = A \cap \Nbhd\sp {\mathcal{X}}\sb s \), where \( \mathcal{X} = \body{ \pre{ < \omega }{ I } } = \pre{ \omega }{ I } \).
Note that if \( T \) is a tree on \( I \) and \( t \in T \), then \( \body{\LOC{T}{t}} = \LOC{ \body{T}}{t} \).

\subsection{Polish spaces}
A topological space is \markdef{Polish} if it is separable and completely metrizable.
If \( X \) is Polish, then so is \( \KK ( X ) \) the hyperspace of all compact subsets of \( X \) with the Vietoris topology.
In this paper \( \KK ( \pre{\omega}{2} ) \) will simply be denoted by \( \KK \).
If \( T \) is a tree on a countable set \( I \), then \( \body{T} \) is a Polish space, as witnessed by the metric \( d\sb T \) defined above. 
A function \( f \colon X \to Y \) between metrizable spaces is of \markdef{Baire-class} \( \alpha \) if the preimage of an open set is in \( \bSigma\sp {0}\sb { \alpha + 1} \).
The set of all functions of Baire-class \( \alpha \) is denoted by \( \Baireclass\sb \alpha ( X , Y ) \) or simply \( \Baireclass\sb \alpha \) when \( X \) and \( Y \) are clear from the context.

\subsubsection{The Cantor and Baire spaces}
The \markdef{Cantor space} \( \pre{ \omega }{2} \) is the body of \( \pre{ < \omega }{2} \). 
It is a compact space, and it is homeomorphic to \( \body{T} \) for any perfect pruned tree \( T \) which is finitely branching, that is \( \setof{u \in T}{ t \subset u \wedge \lh ( u ) = \lh ( t ) + 1 } \) is finite, for all \( t \in T \).

The \markdef{Baire space} \( \pre{ \omega }{ \omega } \) is the body of \( \pre{ < \omega }{ \omega } \).
It is homeomorphic to \( [ 0 ; 1 ] \setminus \Q \), and all of its subsets that are countable union of compact sets have empty interior, and hence it is not homeomorphic to \( \pre{\omega}{2} \).

The map \( t \mapsto \check{t} \) induces a continuous function 
\begin{equation}\label{eq:BaireinCantor0}
 \boldsymbol{h} \colon \pre{ \omega }{ \omega } \to \pre{ \omega }{2} , \quad x \mapsto 0\sp {( x ( 0 ) )} \conc 1 \conc 0\sp {( x ( 1 ) )} \conc 1 \conc \dots .
\end{equation}
The range of \( \boldsymbol{h} \) is
\[
\mathcal{N} = \setofLR{y \in \pre{ \omega }{2} }{ \EXISTSS{\infty}{n} y ( n ) = 1 } ,
\]
and \( \boldsymbol{h} \colon \pre{ \omega }{ \omega } \to \mathcal{N} \) is a homeomorphism.
Note that \( \mathcal{N} \) is a \( \Gdelta \) subset of the Cantor space, and it is the set of all \( x \in \pre{ \omega }{2} \) such that \( x \restriction n \) changes from even to odd infinitely often.
The function \( \hd \) can be defined on \( \pre{ \omega }{2} \setminus \mathcal{N} \) by letting 
\[ 
\hd ( x ) = \text{the shortest \( s \in \pre{ < \omega }{2} \) such that } \bigl ( s \conc 0 \sp {( \omega )} = x \bigr ) .
\]
An \( x \in \pre{ \omega }{2} \setminus \mathcal{N} \) is said to be \markdef{even} or \markdef{odd} iff \( \hd ( x ) \) is even or odd, that is if the number of \( 1 \)s in the sequence \( x \) is even or odd. 

\subsubsection{Approximation of real valued continuous functions}\label{subsubsec:continuousfunctionsfromBaire}
Given \( T \in \PrTr \), a function \( f \colon \body{T} \to [ 0 ; 1 ] \) is \markdef{Lipschitz} iff \( \FORALL{ t \in T } ( \diam ( \appl{f}{ \Nbhd\sb t } ) \leq 2\sp { - \lh t } ) \); equivalently, iff \( \card{ f ( x ) - f (y ) } \leq d\sb T ( x , y ) \) for all \( x , y \in \body{T} \).
Our definition of ``Lipschitz function'' agrees with the usual definition of ``Lipschitz function with constant \( \leq 1 \)'' in the sense of metric spaces.

\begin{lemma}\label{lem:tight}
If \( f \colon \body{T} \to [ 0 ; 1 ] \) is continuous, then there is a gapless pruned tree \( U \) on \( \omega \) and a homeomorphism \( h \colon \body{U} \to \body{T} \) such that \( f \circ h \colon \body{U} \to [ 0 ; 1 ] \) is Lipschitz.
\end{lemma}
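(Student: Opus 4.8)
The plan is to obtain \( U \) by \emph{telescoping} \( T \): each symbol of \( U \) will encode an entire block of symbols of \( T \), the block being chosen just long enough to force the diameter of the corresponding \( f \)-image to halve. Concretely, I would first record the continuity of \( f \) numerically by setting \( \delta ( t ) = \diam ( \appl{f}{ \Nbhd\sb t } ) \) for \( t \in T \). Since \( \Nbhd\sb{ t' } \subseteq \Nbhd\sb t \) whenever \( t \subseteq t' \), the quantity \( \delta \) is nonincreasing along branches, and the continuity of \( f \) together with the fact that \( T \) is pruned yields \( \delta ( x \restriction n ) \to 0 \) for every \( x \in \body{T} \).

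Next I would fix, along each branch, the levels at which \( \delta \) has dropped below successive powers of \( 2 \). Put \( a\sb 0 ( x ) = 0 \) and let \( a\sb{k+1} ( x ) \) be the least \( n > a\sb k ( x ) \) with \( \delta ( x \restriction n ) \le 2\sp{ - ( k + 1 ) } \); such an \( n \) exists because \( \delta ( x \restriction n ) \to 0 \), and it is a proper extension because \( T \) is pruned, so even when \( \delta \) is already \( 0 \) the index still advances and the construction does not stall. A routine induction shows that \( a\sb k ( x ) \) depends only on \( x \restriction a\sb k ( x ) \), so the \emph{blocks} \( b\sb k ( x ) = x \restriction \cointerval{ a\sb k ( x ) }{ a\sb{k+1} ( x ) } \) are finite sequences from \( \omega \) determined by \( x \restriction a\sb{k+1} ( x ) \).

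Then I would let \( U \) be the tree whose branch through \( x \) reads off \( b\sb 0 ( x ) , b\sb 1 ( x ) , \dots \), one block per level: a node of length \( k \) is a tuple \( ( b\sb 0 , \dots , b\sb{k-1} ) \) occurring as the initial blocks of some branch. For a fixed node the admissible next blocks form a countable set, so I relabel the children by an initial segment of \( \omega \) (via the canonical well-order of \( \pre{ < \omega }{ \omega } \)), recording the bijections; this makes \( U \) a gapless pruned tree on \( \omega \). Decoding the labels and concatenating gives a monotone map \( \varphi \) sending a node \( u \) of length \( k \) to the \( T \)-node \( b\sb 0 \conc \dots \conc b\sb{k-1} \), whose length is at least \( k \); as this length grows without bound along each branch, \( \varphi \) is continuous and induces a continuous \( h = f\sb\varphi \colon \body{U} \to \body{T} \).

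Finally I would verify the two requirements. By construction \( \appl{h}{ \Nbhd\sb u } = \Nbhd\sb{ \varphi ( u ) } \) \emph{exactly}, since a branch of \( \body{T} \) extends \( \varphi ( u ) \) iff its block decomposition begins with \( u \); this makes \( h \) a bijection whose inverse (recovering \( u \) of length \( k \) from \( x \restriction a\sb k ( x ) \)) is continuous, so \( h \) is a homeomorphism. For the Lipschitz bound, a node \( u \) of length \( k \) gives \( \diam ( \appl{ ( f \circ h ) }{ \Nbhd\sb u } ) = \diam ( \appl{f}{ \Nbhd\sb{ \varphi ( u ) } } ) = \delta ( \varphi ( u ) ) \le 2\sp{ - k } = 2\sp{ - \lh u } \), directly from the choice of the stopping levels. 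I expect the only delicate points to be checking that the stopping levels, and hence \( U \) and \( \varphi \), are genuinely determined by finite initial data (so that \( \varphi \) is a legitimate tree map), and arranging gaplessness without destroying the homeomorphism—both handled by the finite-dependence induction and the per-node relabeling above.
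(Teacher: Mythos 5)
Your proof is correct and is essentially the paper's own argument in more explicit form: your stopping levels \( a\sb k ( x ) \) pick out precisely the paper's sets \( A\sb k \) of minimal nodes \( t \) with \( \diam ( \appl{f}{\Nbhd\sb t} ) \leq 2\sp{-k} \), and your tree of block tuples is, via concatenation, isomorphic to the paper's tree \( S = \bigcup\sb n A\sb n \) ordered by inclusion, which both proofs then relabel as a gapless pruned tree on \( \omega \). The points you make explicit---the finite dependence of the stopping times, the relabeling of children, and the fact that \( \appl{h}{\Nbhd\sb u} = \Nbhd\sb{\varphi ( u )} \) so that \( h \) is open---are exactly the steps the paper leaves implicit.
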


\begin{proof}
Let \( A\sb 0 = \set{ \emptyset} \) and \( A\sb {n + 1} \) the set of all minimal \( t \in T \) properly extending some node in \( A\sb n \) such that \( \diam ( \appl{ f }{\Nbhd\sb t} ) \leq 2\sp {- n - 1 } \).
By continuity of \( f \), for every \( x \in \body{T} \) there is a unique \( m \) such that \( x \restriction m \in A\sb n \).
The set \( S = \bigcup\sb {n} A\sb n \) ordered under inclusion is a countable rooted combinatorial tree without terminal nodes, and hence it is isomorphic to a pruned gapless tree \( U \) on \( \omega \).
The isomorphism \( U \to S \) induces a homeomorphism \( h \colon \body{U} \to \body{T} \) and by construction \( f \circ h \) is Lipschitz.
\end{proof}

Given \( T \in \PrTr \), a continuous \( f \colon \body{T} \to [ 0 ; 1 ] \), a countable \( Q \subseteq ( 0 ; 1 ) \) such that \( \ran f \subseteq \Cl Q \), a \markdef{\( Q \)-approximation} of \( f \) is a map \( \varphi \colon T \to Q \) such that \( f ( x ) = \lim\sb n \varphi ( x \restriction n ) \) for all \( x \in \body{T} \). 
A continuous \( f \colon \mathcal{N} \to [ 0 ; 1 ] \) can be identified with a continuous \( \bar{f} \colon \pre{\omega}{\omega} \to [ 0 ; 1 ] \), and a \( Q \)-approximation of \( f \) is a map
\[
 \varphi \colon \setof{ s \in \pre{ < \omega}{2} }{ \zt ( s ) = 0 } \to Q 
\]
such that \( \bar{ \varphi } \colon \pre{ < \omega }{ \omega } \to Q \), \( \bar{ \varphi } ( s ) = \varphi ( \check{ s } ) \) with \( \check{ s} \) as in~\eqref{eq:check(t)}, is a \( Q \)-approximation of \( \bar{f} \).
The set 
\[ 
\mathbb{D} = \setofLR{ k \cdot 2\sp {- n}}{ 0 < k < 2\sp n \wedge n \in \omega } 
\] 
of all \markdef{dyadic rationals} of \( ( 0 ; 1 ) \) is countable and dense in \( [ 0 ; 1 ] \), so for any continuous \( f \colon \body{T} \to [ 0 ; 1 ] \) we can compute a \( \mathbb{D} \)-approximation, often called a \markdef{dyadic approximation}.
If we fix a well-ordering \( \lhd \) of \( \mathbb{D} \) of order type \( \omega \) we can choose a \markdef{canonical dyadic approximation} \( \varphi \) of \( f \colon \body{T} \to [ 0 ; 1 ] \) by requiring that
\[
 \varphi ( t ) = \text{the \( \lhd \)-minimum of } \mathbb{D} \cap I\sb t
\]
where
\[
I\sb t = 
	\begin{cases}
	\left ( \inf ( \appl{f}{ \Nbhd\sb t } ) ; \sup ( \appl{f}{ \Nbhd\sb t } ) \right ) & \text{if } \inf ( \appl{f}{ \Nbhd\sb t } ) < \sup ( \appl{f}{ \Nbhd\sb t } )
	\\
	\left ( \inf ( \appl{f}{ \Nbhd\sb t } ) - 2\sp {- \lh ( t ) - 1 } ; \sup ( \appl{f}{ \Nbhd\sb t } ) + 2\sp {- \lh ( t ) - 1 }\right ) & \text{if } \inf ( \appl{f}{ \Nbhd\sb t } ) = \sup ( \appl{f}{ \Nbhd\sb t } ) .
	\end{cases}
\]
Thus if \( \varphi \colon T \to \mathbb{D} \) is the canonical dyadic approximation of a Lipschitz \( c \colon \body{T} \to [ 0 ; 1 ] \), then
\begin{equation}\label{eq:errorindyadicapproximation}
t \conc \seq{ h } , t \conc \seq{ k } \in T \IMPLIES \card{ \varphi ( t \conc \seq{ h } ) - \varphi ( t \conc \seq{ k } ) } < 2\sp { - \lh t } .
\end{equation}
Note that \( \mathbb{D} \) is the set of all values \( \mu ( D ) \) with \( \emptyset \subset D \subset \pre{ \omega }{2}\) clopen.
For each \( r \in \mathbb{D} \) we pick a clopen set \( D \) such that \( \mu ( D ) = r \), so whenever we say ``choose \( D \) such that \( \mu ( D ) \) has value \( r \in \mathbb{D} \)'' it is understood that such choice is canonical.

\subsection{Borel and projective sets}
\subsubsection{Pointclasses}
Let \( X \) be a Polish space.
As customary in descriptive set theory, we use \( \bSigma\sp {0}\sb { \alpha } ( X ) \) and \( \bPi\sp {0}\sb { \alpha } ( X ) \) to denote the Baire additive and multiplicative pointclasses, and set \( \bDelta\sp {0}\sb { \alpha } ( X ) = \bSigma\sp {0}\sb { \alpha } ( X ) \cap \bPi\sp {0}\sb { \alpha } ( X ) \).
In particular \( \bSigma\sp {0}\sb {1} ( X ) \), \( \bPi\sp {0}\sb {1} ( X ) \), and \( \bDelta\sp {0}\sb {1} ( X ) \) are the collections of all open, closed, and clopen subsets of \( X \).
We write \( \Bor ( X ) \) for the collection of all Borel subsets of \( X \).

If \( A \subseteq X \times \pre{\omega}{\omega} \), then 
\[
\proj A = \setofLR{x \in X}{ \EXISTS{y \in \pre{\omega}{\omega} } ( x , y ) \in A } 
\] 
is the \markdef{projection} of \( A \).
The projective hierarchy on \( X \) is defined as follows: \( \bSigma\sp {1}\sb {1} ( X ) = \setof{ \proj C}{ C \in \bPi\sp {0}\sb {1} ( X \times \pre{\omega}{\omega} ) } \), 
 \( \bPi\sp {1}\sb {n} ( X ) = \setofLR{ X \setminus A}{ A \in \bSigma\sp {1}\sb {n} ( X ) } \), and \( \bSigma\sp {1}\sb {n + 1} ( X ) = \setofLR{ \proj C }{ C \in \bPi\sp {1}\sb {n} ( X \times \pre{\omega}{\omega} ) } \).
We also set \( \bDelta\sp {1}\sb {n} ( X ) = \bPi\sp {1}\sb {n} ( X ) \cap \bSigma\sp {1}\sb {n} ( X ) \).
The sets in \( \bSigma\sp {1}\sb {1} ( X ) \) are called \markdef{analytic} subsets of \( X \), and their complements, i.e.~the elements of \( \bPi\sp {1}\sb {1} ( X ) \), are called \markdef{coanalytic} subsets of \( X \), and by Lusin's theorem \( \bDelta\sp {1}\sb {1} ( X ) = \Bor ( X ) \).

A \markdef{boldface pointclass} for \( X \) is a family \( \bGamma ( X ) \subseteq \Pow ( X ) \) closed under continuous pre-images; its \markdef{dual} is the boldface pointclass \( \dual{ \bGamma } ( X ) = \setof{ X \setminus A }{ A \in \bGamma ( X ) } \).
If the space \( X \) is clear from the context we write 
\[ 
A\sp \complement = X \setminus A 
\] 
for the \markdef{complement of \( A \) in \( X \)}. 
We say that \( \bGamma \) is \markdef{self-dual} if it coincides with its dual, otherwise it is said to be non-self-dual.
The families \( \bSigma\sp {0}\sb { \alpha } ( X ) \), \( \bPi\sp {0}\sb { \alpha } ( X ) \), \( \bSigma\sp {1}\sb {n} ( X ) \), and \( \bPi\sp {1}\sb {n} ( X ) \) are non-self-dual boldface pointclasses when \( X \) is uncountable, while \( \bDelta\sp {0}\sb { \alpha } ( X ) \) and \( \bDelta\sp {1}\sb {n} ( X ) \) are self-dual.
The pointclass \( 2 \)-\( \bGamma ( X ) \) is \( \setof{ A \cap B }{A \in \bGamma ( X ) \wedge B \in \dual{ \bGamma} ( X ) } \), while its dual \( ( 2 \text{-} \bGamma ( X ) ) \dual{}\) is \( \setof{ A \cup B }{A \in \bGamma ( X ) \wedge B \in \dual{ \bGamma} ( X ) } \).
In particular, \( 2 \text{-} \bGamma ( X ) = 2 \text{-} \dual{\bGamma} ( X ) \).
(This is the first level of the Hausdorff's difference hierarchy~\cite[][Section 22.E]{Kechris:1995kc}.)
For the sake of brevity we will say that a subset \( A \) of \( X \) is in \( \bGamma \) to mean that \( A \in \bGamma ( X ) \).

If \( A \subseteq X \times Y \) and \( ( \bar{x} , \bar{y} ) \in X \times Y \) then the \markdef{vertical section} of \( A \) at \( \bar{x} \) and the \markdef{horizontal section} of \( A \) at \( \bar{y} \) are the sets
\[
 \vsection{A}{\bar{x}} = \setofLR{ y \in Y }{ ( \bar{x} , y ) \in A } \quad\text{and}\quad \hsection{A}{\bar{y}} = \setofLR{ x \in X }{ ( x , \bar{y} ) \in A } .
\]

\subsubsection{Effective methods}
All the Polish spaces considered in this paper (\( \omega \), the unit interval, \( \pre{\omega}{2} \), \( \KK \), \( \MALG \), \dots) are recursively presented, and so are their products.
Following~\cite[][Chapter 3]{Moschovakis:2009fk}, if \( X \) is a recursively presented Polish space, we can define the \markdef{lightface pointclasses} 
\[ 
\varSigma\sp 0\sb 1 \cap \Pow ( X ) , \quad \varPi\sp 0\sb 1 \cap \Pow ( X ) , \quad \varDelta\sp 0\sb 1 \cap \Pow ( X ) 
\] 
of the effectively open, closed, and clopen subsets of \( X \).
The effective analogue of the families of analytic, coanalytic, and Borel sets are 
\begin{align*}
\varSigma\sp 1\sb 1 \cap \Pow ( X ) & = \setof{ \proj C }{ C \in \varPi\sp 0\sb 1 \cap \Pow ( X \times \pre{\omega}{\omega} ) }
\\
 \varPi\sp 1\sb 1 \cap \Pow ( X ) & = \setof{ X \setminus A }{ A \in \varSigma\sp {1}\sb {1} \cap \Pow ( X ) } 
 \\ 
 \varDelta\sp 1\sb 1 \cap \Pow ( X ) & = \varSigma\sp 1\sb 1 \cap \varPi\sp 1\sb 1 \cap \Pow ( X ) .
\end{align*}
These lightface pointclasses can be relativized to any parameter \( p \in \pre{\omega}{\omega} \): if \( \varGamma \) is either one of \( \varSigma\sp i\sb n \), \( \varPi\sp i\sb n \) with \( i = 0 , 1 \) and \( n \geq 1 \), then set
\[ 
\varGamma ( p ) \cap \Pow ( X ) = \setof{ \vsection{A}{p} }{ A \in \varGamma \cap \Pow ( \pre{\omega}{\omega} \times X )} .
\] 
Therefore \( \bGamma ( X ) = \bigcup\sb {p \in \pre{\omega}{\omega} } \varGamma ( p ) \cap \Pow ( X ) \), where \( ( \bGamma , \varGamma ) \) is either one of the pairs \( ( \bSigma\sp {i}\sb {n} , \varSigma\sp i\sb n ) \) or \( ( \bPi\sp {i}\sb {n} , \varPi\sp i\sb n ) \) with \( i \leq 1 \leq n \).
It follows that \( \Bor ( X ) = \bDelta\sp {1}\sb {1} ( X ) = \bigcup\sb {p \in \pre{\omega}{\omega} } \varDelta\sp 1\sb 1 ( p ) \cap \Pow ( X ) \), where \( \varDelta\sp 1\sb 1 ( p ) = \varSigma\sp 1\sb 1 ( p ) \cap \varPi\sp 1\sb 1 ( p ) \).

Note that the lightface pointclasses can be used to classify points of the recursively presented Polish spaces.
In particular we can consider \( \varGamma ( p ) \cap \pre{\omega}{\omega} \) the collections of points of \( \pre{\omega}{\omega} \) which are in the relativization of \( \varGamma \) to \( p \in \pre{\omega}{\omega} \). 

\subsubsection{Complete sets}
A set \( A \subseteq X \) is \markdef{\( \bGamma \)-hard} if for each zero-dimensional Polish space \( Z \) and each \( B \in \bGamma ( Z ) \) there is a continuous \( f \colon Z \to X \) such that \( B = f\sp {-1} ( A ) \)---such a function is called a \markdef{reduction of \( A \) to \( B \)}.
If moreover \( A \in \bGamma ( X ) \), then \( A \) is said to be \markdef{\( \bGamma \)-complete}.
A set \( \mathcal{U} \in \bGamma ( X \times \pre{\omega}{ I } ) \) is \markdef{\( \bGamma \)-universal} (with respect to \( \pre{\omega}{ I } \)) if \( \bGamma ( X ) = \setof{ \hsection{\mathcal{U}}{y}}{ y \in \pre{\omega}{I} } \).

The boldface pointclasses \( \bSigma\sp {0}\sb { \alpha } ( X ) \), \( \bPi\sp {0}\sb { \alpha } ( X ) \), \( \bSigma\sp {1}\sb {n} ( X ) \), \( \bPi\sp {1}\sb {n} ( X ) \) have universal sets with respect to \( \pre{\omega}{ 2 } \) and to \( \pre{\omega}{\omega} \), and hence have complete sets.
Moreover, if \( X \) is recursively presented and if \( i \in \set{ 0 , 1 } \) and \( n \geq 1 \), then there is \( \mathcal{U} \in \varSigma\sp i\sb n ( X \times \pre{\omega}{I} ) \) which is universal for \( \bSigma\sp {i}\sb {n} ( X ) \), and similarly for \( \varPi\sp i\sb n \) and \( \bPi\sp {i}\sb {n} \).

If in the definition of \( \bGamma \)-completeness the function \( f \) witnessing \( \bGamma \)-hardness is only assumed to be Borel we have the weaker notion of \markdef{Borel-\( \bGamma \)-completeness}.
Assuming projective determinacy, for every \( n \geq 1 \) the notions of Borel-\( \bPi\sp {1}\sb {n} \)-completeness and Borel-\( \bSigma\sp {1}\sb {n} \)-completeness are equivalent to ordinary \( \bPi\sp {1}\sb {n} \)-completeness and \( \bSigma\sp {1}\sb {n} \)-completeness, and by a theorem of Kechris~\cite{Kechris:1997qy} the result holds in \( \ZFC \) when \( n = 1 \).

The set
\[ 
\IF = \setof{ T \in \Tr }{ \body{ T } \neq \emptyset } 
\] 
of all trees on \( \omega \) that have a branch is \( \varSigma\sp 1\sb 1 \) and \( \bSigma\sp { 1}\sb { 1} \)-complete, so its complement 
\[ 
\WF = \Tr \setminus \IF 
\] 
is \( \varPi\sp 1\sb 1 \) and \( \bPi\sp { 1 }\sb { 1 } \)-complete.
If \( U \) is \( \bGamma \)-complete and \( V \) is \( \dual{\bGamma} \)-complete, then \( U \times V \) is \( 2 \)-\( \bGamma \)-complete.
In particular: \( \WF \times \IF \) is \( 2 \)-\( \varSigma\sp {1}\sb {1} \) and \( 2 \)-\( \bSigma\sp {1}\sb {1} \)-complete.

Great many proofs of the fact that a given subset \( A \) of a Polish space \( X \) is \( \bSigma\sp {1}\sb {1} \)-hard rely on the construction of a continuous function \( \Tr \to X \) mapping ill-founded trees into \( A \) and well-founded trees outside of \( A \).
However, it is enough to continuously map trees with uncountably many branches inside \( A \) and well-founded trees outside of \( A \).
This can be seen by composing a reduction with the continuous, injective function \( \boldsymbol{E} \colon \Tr \to \Tr \) defined by
\[
\boldsymbol{E} ( T ) = {\downarrow} \setof{ \seq{ t\sb 0 , k\sb 0 , \dots , t\sb n , k\sb n } }{ \seq{ t\sb 0 , \dots , t\sb n } \in T \wedge k\sb 0 , \dots , k\sb n \in \omega }.
\]
The map \( \boldsymbol{E} \) enlarges the number of branches, in the sense that if \( T \) is well-founded then so is \( \boldsymbol{E} ( T ) \), and if \( T \in \IF \) then \( \body{ \boldsymbol{E} ( T ) } \) contains a perfect set.

If \( \kappa \leq \omega \) and \( \bowtie \) is one of \( {=} , {<} , { \leq } , {>} , {\geq} \), the set of all trees on \( \omega \) that have \( \bowtie \kappa \) branches is
\[ 
\Br{ \bowtie \kappa }{} = \setof{ T \in \Tr }{ \card{ \body{ T } } \bowtie \kappa } .
\]
For notational ease we will write \( \Br{ \kappa }{} \) rather than \( \Br{ = \kappa }{} \).
Note that \( \WF = \Br{ 0 }{} \) and that \( \IF = \Br{ \geq 1}{} \).
For every \( 1 \leq \kappa \leq \omega \) the set \( \Br{ \kappa }{} \) is the set of sections of the Borel set \( \setof{ ( T , x ) }{ x \in \body{T}} \) with exactly \( \kappa \) elements, so it is \( \bPi\sp {1}\sb {1} \) by Theorem~\ref{th:sections} below.
In fact it is \( \bPi\sp {1}\sb {1} \)-complete: if \( S \in \Br{\kappa}{} \) then the map \( \Tr \to \Tr \), \( T \mapsto 0 \conc S \cup 1 \conc \boldsymbol{E} ( T ) \) witnesses that \( \WF \leqW \Br{ \kappa }{} \).
Similarly \( \Br{ \leq n}{} \), \( \Br{ < \omega }{} \), and \( \Br{ \leq \omega }{} \) are \( \bPi\sp {1}\sb {1} \)-complete.

By König's lemma every infinite \( T \in \Tr\sb 2 \) has a branch, so the definitions above need to be changed.
Then let 
\begin{align*}
\IF \sb 2 &= \setof{ T \in \PrTr \sb 2 }{ \body{ T } \cap \mathcal{N} \neq \emptyset } ,
\\
\WF\sb 2 & = \setof{ T \in \PrTr\sb 2}{ \body{ T } \cap \mathcal{N} = \emptyset } ,
\\
\Br{ \bowtie \kappa }{2} & = \setof{ T \in \PrTr\sb 2}{ \card{ \body{ T } \cap \mathcal{N} } \bowtie \kappa } .
\end{align*}
The map \( \boldsymbol{E} \) above can be turned into a continuous, injective function 
\begin{equation}\label{eq:Explode}
\boldsymbol{E}\sb 2 \colon \PrTr\sb 2 \to \PrTr\sb 2 
\end{equation}
such that if \( T \in \WF\sb 2 \) then \( \boldsymbol{E}\sb 2 ( T ) \in \WF\sb 2 \), and if \( T \in \IF\sb 2 \) then \( \body{ \boldsymbol{E}\sb 2 ( T ) } \cap \mathcal{N} \) contains a perfect set. 
Thus by the arguments above 
\begin{equation}\label{eq:Brarecomplete}
 \Br{ n }{2} ,\Br{ \leq n }{2} ,\Br{ < \omega}{2} , \Br{ \leq \omega}{2} \text{ are \( \bPi\sp {1}\sb {1} \)-complete,}
\end{equation}
and \( \WF\sb 2 \times \IF\sb 2 \) is complete for \( 2 \)-\( \bSigma\sp {1}\sb {1} \).

The pruned tree \( \boldsymbol{E}\sb 2 ( T ) \) is obtained from \( T \) by pairing each \( x \in \body{ T } \cap \mathcal{N} \) with every \( y \in \pre{\omega}{\omega} \).
To be more specific: for \( t \in \pre{ < \omega }{2} \) and \( u \in \pre{ < \omega }{ \omega } \) with \( \LENGTH ( t ) = \lh ( u ) \), where \( \LENGTH \) is as in~\eqref{eq:virtuallength}, the sequence \( t \ltimes u \in \pre{ < \omega }{2} \) is defined as follows: if \( t = 0\sp { ( n\sb 0 ) } \conc 1 \conc 0\sp { ( n\sb 1 ) } \conc 1 \conc \dots \conc 1 \conc 0\sp { ( n\sb k ) } \conc 1 \conc 0\sp { ( n\sb {k + 1} ) } \) and \( u = \seq{ m\sb 0 , m\sb 1 , \dots , m\sb k } \)
\[
t \ltimes u = 0\sp { ( n\sb 0 ) } \conc 1 \conc 0\sp { ( m\sb 0 ) } \conc 1 \conc 0\sp { ( n\sb 1 ) } \conc 1 \conc 0\sp { ( m\sb 1 ) } \conc 1 \conc \dots \conc1 \conc 0\sp { ( n\sb k ) } \conc 1 \conc 0\sp { ( m\sb k ) } \conc 1 \conc 0\sp { ( n\sb {k + 1} ) } .
\]
Then let
\[
\boldsymbol{E}\sb 2 ( T ) = {\downarrow} \setof{ t \ltimes u }{ t \in T \wedge u \in \pre{ < \omega }{ \omega } \wedge \LENGTH ( t ) = \lh ( u ) } .
\]

\subsubsection{Some results on sections}\label{subsubsec:Sections}
The next result summarizes two classical results in Descriptive Set Theory and some easy consequences.

\begin{theorem}\label{th:sections}
Suppose \( X , Y \) are Polish, \( A \subseteq X \times Y \) is analytic, and for \( \kappa \leq \omega \) let 
\[ 
P\sb { \bowtie \kappa } = \setof{ x \in X }{ \card{\vsection{A}{x} } \bowtie \kappa } ,
\]
where \( \bowtie \) is one of \( {=} , {\leq} , {<} \).
\begin{enumerate-(a)}
\item\label{th:sections-a}
The set \( P\sb { \leq \omega } \) is coanalytic (Mazurkievicz-Sierpiński), and so are \( P\sb { \leq n } \) and \( P\sb { < \omega} \).
\item\label{th:sections-b}
If moreover \( A \) is Borel, then the set of uniqueness \( P\sb {=1} \) is also coanalytic (Lusin), and so are \( P\sb {= n} \) and \( P\sb {= \omega} \).
\end{enumerate-(a)}
\end{theorem}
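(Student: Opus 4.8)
The plan is to isolate one elementary building block, derive everything except the $\aleph_0$ case as a formal consequence of the closure properties of $\bSigma^1_1$, and reserve the genuine work for $P_{=\omega}$. First I would record that for every $k\geq 1$ the set $Q_k=\setof{x\in X}{\card{\vsection{A}{x}}\geq k}$ is analytic: writing $A^{[k]}\subseteq X\times Y^k$ for the set of tuples $(x,y_1,\dots,y_k)$ with $(x,y_i)\in A$ for all $i$ and $y_i\neq y_j$ for $i<j$, the inequalities cut out an open set, so $A^{[k]}$ is analytic, and $Q_k=\proj A^{[k]}$ is analytic because analytic sets are closed under projection. From this, part~\ref{th:sections-a} for the finite indices is immediate: $P_{\leq n}=X\setminus Q_{n+1}$ is coanalytic, and since $\setof{x}{\vsection{A}{x}\text{ is infinite}}=\bigcap_{k\geq 1}Q_k$ is a countable intersection of analytic sets, hence analytic, its complement $P_{<\omega}$ is coanalytic. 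The remaining clause of~\ref{th:sections-a}, that $P_{\leq\omega}$ is coanalytic, is exactly the Mazurkiewicz–Sierpiński theorem: by the perfect set property for analytic sets $\vsection{A}{x}$ is uncountable iff it contains a nonempty perfect set, and the latter is an analytic condition on $x$, so $P_{>\omega}$ is analytic.

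For part~\ref{th:sections-b} I would take Lusin's theorem — that $P_{=1}$ is coanalytic when $A$ is Borel — as the sole external input and reduce the finite cases to it. Fix a Borel linear ordering $\prec$ of $Y$ (pull back $<_{\mathrm{lex}}$ along a Borel injection $Y\to\pre{\omega}{2}$) and, for $n\geq 1$, set
\[
A^{(n)}=\setof{(x,y_1,\dots,y_n)\in X\times Y^n}{y_1\prec\cdots\prec y_n\ \wedge\ \textstyle\bigwedge_{i}(x,y_i)\in A}.
\]
This set is Borel, and $\card{\vsection{A^{(n)}}{x}}$ equals the number of $\prec$-increasing $n$-tuples from $\vsection{A}{x}$, i.e.\ the number of its $n$-element subsets, which equals $1$ precisely when $\card{\vsection{A}{x}}=n$. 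Hence $P_{=n}=\setof{x}{\card{\vsection{A^{(n)}}{x}}=1}$ is coanalytic by Lusin's theorem applied to the Borel set $A^{(n)}\subseteq X\times Y^n$.

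The case $P_{=\omega}$ is the delicate one, and I expect it to be the main obstacle. Writing $P_{=\omega}=P_{\leq\omega}\cap\setof{x}{\vsection{A}{x}\text{ is infinite}}$ presents it as the intersection of the coanalytic set $P_{\leq\omega}$ with the merely analytic set of infinite sections, and such an intersection is a priori only $\bDelta^{1}_{2}$; this is exactly where the Borel hypothesis must enter essentially rather than formally (note that in general both $P_{\leq\omega}$ and $P_{<\omega}$ are properly coanalytic, so their difference being coanalytic is not automatic). The plan is to pass to $A^{\ast}=A\cap(P_{\leq\omega}\times Y)$, whose section $\vsection{A^{\ast}}{x}$ equals $\vsection{A}{x}$ when the latter is countable and is empty otherwise, so that $P_{=\omega}=\setof{x}{\card{\vsection{A^{\ast}}{x}}=\aleph_0}$ while all sections of $A^{\ast}$ are now countable. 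I would then invoke the Lusin–Novikov theorem to enumerate these countable sections by countably many partial uniformizing functions arranged to have decreasing domains $D_0\supseteq D_1\supseteq\cdots$ with $x\in D_k\iff\card{\vsection{A^{\ast}}{x}}>k$, whence $P_{=\omega}=\bigcap_k D_k$. The obstacle is precisely the legitimacy of this step: Lusin–Novikov is usually stated for Borel sets, whereas $A^{\ast}$ is only coanalytic, so I would need the coanalytic (equivalently, effective $\bPi^1_1$) refinement of the theorem to guarantee that the domains $D_k$ are themselves coanalytic. Granting this, $\bigcap_k D_k$ is a countable intersection of coanalytic sets and hence coanalytic, completing~\ref{th:sections-b}.
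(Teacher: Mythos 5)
Parts~(a) and the finite cases of part~(b) in your proposal are correct and essentially coincide with the paper's own argument (projections of sets of distinct tuples for~(a); a Borel linear order plus Lusin's set-of-uniqueness theorem for \( P_{=n} \)). The gap is at \( P_{=\omega} \), exactly at the step you flagged, and it cannot be filled in the way you hope: the ``coanalytic refinement of Lusin--Novikov'' you want to invoke is false. By Kond\^{o}'s uniformization theorem, every \( \bSigma^1_2 \) set --- in particular every analytic non-Borel set \( S \) --- is the projection of a coanalytic set \( C \subseteq X \times \pre{\omega}{\omega} \) all of whose sections have at most one element (for \( S = \proj C' \) with \( C' \) closed, the leftmost-branch uniformization of \( C' \) already suffices). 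Such a \( C \) is a \emph{single} coanalytic partial-function graph with countable sections whose domain \( \proj C = S \) is not coanalytic; so no theorem can guarantee coanalytic domains for uniformizing functions of coanalytic sets. Worse, for your particular \( A^{\ast} = A \cap ( P_{\leq\omega} \times Y ) \) the needed claim is circular rather than merely unproved: \( D_k = \setof{ x }{ \card{ \vsection{A^{\ast}}{x} } > k } = P_{=\omega} \cup \bigcup_{n > k} P_{=n} \), so, granting the finite cases already proved, ``every \( D_k \) is coanalytic'' is \emph{equivalent} to ``\( P_{=\omega} \) is coanalytic''. Passing to \( A^{\ast} \) has not gained anything; it has renamed the theorem.

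What actually closes this case --- and it is what the paper does --- is effective descriptive set theory applied to the Borel set \( A \) itself, pointwise in \( x \). After reducing to \( X = Y = \pre{\omega}{\omega} \) (countable \( Y \) is disposed of by the ordinary Borel Lusin--Novikov theorem), fix \( p \) with \( A \in \varDelta^1_1 ( p ) \). Harrison's effective perfect set theorem says that a countable \( \varSigma^1_1 ( z ) \) set contains only \( \varDelta^1_1 ( z ) \) points; this is where analyticity of the sections enters essentially, since the analogous statement for \( \varPi^1_1 ( z ) \) sets is false (Kleene's \( \mathcal{O} \) is a \( \varPi^1_1 \) singleton that is not \( \varDelta^1_1 \)) --- which is the underlying reason your transfer of Lusin--Novikov to coanalytic sets must break. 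Hence for \( x \in P_{\leq\omega} \) every element of \( \vsection{A}{x} \) is \( \varDelta^1_1 ( x \oplus p ) \), so
\[
x \in P_{=\omega} \IFF x \in P_{\leq \omega} \wedge \FORALL{n} \EXISTS{ y \in \varDelta^1_1 ( x \oplus p ) \cap \pre{\omega}{\omega} } [ \, y \text{ codes } n \text{ distinct elements of } \vsection{A}{x} \, ] ,
\]
and Kleene's theorem on restricted quantification converts \( \EXISTS{ y \in \varDelta^1_1 ( x \oplus p ) } \) applied to this Borel matrix into a \( \bPi^1_1 \) condition. Some such effective (local) argument is unavoidable here; the purely boldface route you propose does not exist.
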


\begin{proof}
The proof of the Mazurkievicz-Sierpiński and Lusin results can be found in~\cite[][Theorems 29.19 and 8.11]{Kechris:1995kc}.
Let \( A \) be analytic and let \( C \subseteq X \times Y \times \pre{\omega}{\omega} \) be a closed set that projects onto \( A \).
Then 
\[
x \in P\sb { \leq n } \IFF \neg \EXISTS{ ( y\sb 0 , z\sb 0 ), \dots , ( y\sb n , z\sb n )}\Bigl ( \bigwedge\sb { i \leq n } ( x , y\sb i , z\sb i ) \in C \wedge \bigwedge\sb { i < j \leq n } y\sb i \neq y\sb j \Bigr ) ,
\]
so \( P\sb { \leq n } \) is \( \bPi\sp {1}\sb {1} \).
Therefore \( P\sb {< \omega } = \bigcup\sb {n} P\sb { \leq n } \) is also \( \bPi\sp {1}\sb {1} \).

Suppose now \( A \) is Borel, and let \( < \) be a Borel linear order on \( Y \)---such order exists as \( Y \) is Borel isomorphic to either a countable discrete space, or else to \( \R \).
For \( n > 1 \) then \( P\sb {=n} \) is the set of uniqueness of the Borel set
\[
\setof{ ( x , ( y\sb 1 , \dots , y\sb n ) ) \in X \times Y\sp n }{ \set{ y\sb 1 , \dots , y\sb n } \subseteq \vsection{A}{ x } \AND y\sb 1 < \dots < y\sb n } . 
\]
We are left to prove that \( P\sb {=\omega} \in \bPi\sp {1}\sb {1} \).
The result is trivial if \( X \) is countable, and if \( Y \) were countable, then by~\cite[Lemma 18.12 and Exercise 18.15]{Kechris:1995kc} \( A \) would be the union of countably many graphs of Borel functions \( f\sb n \) uniformizing \( A \), so \( \proj A \) and each \( P\sb {=n} \) would be Borel, and so would be \( P\sb {=\omega} \).
Thus without loss of generality we may assume that \( X = Y = \pre{\omega}{\omega} \).
As \( A \) is Borel, it is \( \varDelta\sp 1\sb 1 ( p ) \) for some \( p \in \pre{\omega}{\omega} \).
By Harrison's perfect set theorem~\cite[Theorem 4F.1]{Moschovakis:2009fk}, any countable \( \varSigma\sp 1\sb 1 ( z ) \) subset of \( X \) contains only \( \varDelta\sp 1\sb 1 ( z ) \)-points, so for all \( x \in X = \pre{\omega}{\omega} \)
\begin{multline*}
 x \in P\sb {=\omega} \IFF x \in P\sb { \leq \omega } \wedge \FORALL{n} \EXISTS{ y \in \varDelta\sp 1\sb 1 ( x \oplus p ) \cap \pre{\omega}{\omega} } 
 \\
 [ \FORALL{i < j < n} ( \decode{y}{i} \neq \decode{y}{j} ) \wedge \FORALL{i < n} ( ( x , \decode{y}{i} ) \in A ) ] ,
\end{multline*}
where \( x \oplus p \) and \( \decode{y}{i} \) are as in~\eqref{eq:oplus} and~\eqref{eq:decode}.
By Kleene's theorem on restricted quantification~\cite[Theorem 4D.3]{Moschovakis:2009fk} the quantification \( \EXISTS{y \in \varDelta\sp 1\sb 1 ( x \oplus p ) \cap \pre{\omega}{\omega} } \) in the formula above is equivalent to a universal quantification, so \( P\sb {= \omega} \in \bPi\sp {1}\sb {1} \).
\end{proof}

Note that we cannot expect that either \( P\sb {=n} \) or \( P\sb {=\omega} \) be coanalytic, or even Borel, if \( A \in \bSigma\sp {1}\sb {1} \setminus \bDelta\sp {1}\sb {1} \).
For \( P\sb {=\omega} \) consider the set \( A = S \times T \) where \( S \subseteq X \) is analytic but not Borel and \( T \) is a countable subset of \( Y \); replacing \( T \) with a finite set yields a counterexample for \( P\sb {=n} \).

Part~\ref{th:sections-a} of Theorem~\ref{th:sections} says that the set of all large sections of an analytic set is analytic, where ``large'' means uncountable, that is containing a perfect set, by the perfect set property for \( \bSigma\sp {1}\sb {1} \).
The next result, whose proof can be found in~\cite[][Theorems 29.31 and 29.36]{Kechris:1995kc}, says that similar statement holds for other notions of largeness.

\begin{theorem}\label{th:sections2}
Suppose \( X , Y \) are Polish, \( A \subseteq X \times Y \) is analytic, \( U \subseteq Y \) open, and \( \nu \) a Borel probability measure on \( Y \).
\begin{enumerate-(a)}
\item\label{th:sections2-a}
The sets \( \setof{ x \in X }{ \vsection{A}{x} \text{ is not meager in } U } \), \( \setof{ x \in X }{ \vsection{A}{x} \text{ is comeager in } U } \) are analytic (Novikov).
\item\label{th:sections2-b}
For every \( a \in [ 0 ; 1 ] \) the sets \( \setof{ x \in X }{ \nu ( \vsection{A}{x} ) > a } \) and \( \setof{ x \in X }{ \nu ( \vsection{A}{x} ) \geq a } \) are analytic (Kondô-Tugué).
\end{enumerate-(a)}
\end{theorem}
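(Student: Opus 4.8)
The plan is to derive the two parts from the corresponding closure properties of \( \bSigma\sp{1}\sb{1} \) under the category quantifier (part~\ref{th:sections2-a}, Novikov) and the measure quantifier (part~\ref{th:sections2-b}, Kondô--Tugué). Throughout I fix a countable basis \( \setof{ \Nbhd\sb s }{ s } \) of nonempty basic open subsets of \( Y \) and use two standard facts: analytic sets have the Baire property, and analytic sets are universally measurable.

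For part~\ref{th:sections2-a} I would first collapse both target sets to a single quantifier. Since \( \vsection{A}{x} \) has the Baire property, it is non-meager in \( U \) precisely when it is comeager in some basic open \( \Nbhd\sb s \subseteq U \) (an open meager set is empty), so
\[
\setof{ x }{ \vsection{A}{x} \text{ not meager in } U } = \bigcup\sb{ \Nbhd\sb s \subseteq U } \setof{ x }{ \vsection{A}{x} \text{ comeager in } \Nbhd\sb s } ,
\]
a countable union, whereas the comeagerness set is the single instance for \( V = U \). Hence it suffices to show that \( C\sb V = \setof{ x }{ \vsection{A}{x} \text{ comeager in } V } \) is analytic for every open \( V \subseteq Y \). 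The heart of the matter is the unfolded Banach--Mazur game: writing \( A ( x , y ) \iff \exists z\, F ( x , y , z ) \) with \( F \) closed, the two players build a decreasing sequence of basic open subsets of \( V \) in the \( y \)-coordinate while the second player simultaneously approximates a witness \( z \); by (unfolded) Oxtoby's theorem, \( \vsection{A}{x} \) is comeager in \( V \) iff the second player has a winning strategy. I expect the main obstacle here to be exactly what makes this Novikov's theorem and not a triviality: the naive ``\( \exists \)~strategy \( \forall \)~play'' reading is only \( \bSigma\sp{1}\sb{2} \), and one must exploit that the game is \emph{closed} for the verifying player to reduce the existence of a winning strategy to an honestly \( \bSigma\sp{1}\sb{1} \) condition in \( x \), via a pruning/boundedness argument on the tree of non-losing positions. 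Granting this, each \( C\sb V \) is analytic and part~\ref{th:sections2-a} follows.

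For part~\ref{th:sections2-b} I would pass to the witness space, where mass can be approximated from inside by compact sets. Write \( A = \proj\sb{ X \times Y } F \) with \( F \subseteq X \times Y \times \pre{\omega}{\omega} \) closed, so that \( \vsection{A}{x} = \proj\sb Y \vsection{F}{x} \). The key input is the Choquet capacitability theorem applied to the set function \( I ( S ) = \nu\sp{*} ( \proj\sb Y S ) \) on \( Y \times \pre{\omega}{\omega} \), which one checks is a capacity; capacitability of the closed set \( \vsection{F}{x} \) then gives
\[
\nu ( \vsection{A}{x} ) = I ( \vsection{F}{x} ) = \sup \setofLR{ \nu ( \proj\sb Y \hat{K} ) }{ \hat{K} \in \KK ( Y \times \pre{\omega}{\omega} ) \wedge \hat{K} \subseteq \vsection{F}{x} } .
\]
Plain inner regularity of \( \nu \) does not suffice, since a compact \( K \subseteq \vsection{A}{x} \) need not lift to a compact subset of \( \vsection{F}{x} \) (the fibres in \( \pre{\omega}{\omega} \) may be non-compact); this is precisely where capacitability is indispensable, and I expect it to be the main obstacle of this part. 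Consequently
\[
\nu ( \vsection{A}{x} ) > a \iff \exists \hat{K} \in \KK ( Y \times \pre{\omega}{\omega} ) \, \bigl ( \hat{K} \subseteq \vsection{F}{x} \AND \nu ( \proj\sb Y \hat{K} ) > a \bigr ) .
\]
Now \( \setof{ ( x , \hat{K} ) }{ \hat{K} \subseteq \vsection{F}{x} } \) is closed in \( X \times \KK ( Y \times \pre{\omega}{\omega} ) \) (because \( F \) is closed and \( ( x , \hat{K} ) \mapsto \set{x} \times \hat{K} \) is continuous into the hyperspace), while \( \hat{K} \mapsto \nu ( \proj\sb Y \hat{K} ) \) is upper semicontinuous, so the displayed conjunction defines a Borel subset of \( X \times \KK ( Y \times \pre{\omega}{\omega} ) \) and its projection to \( x \) is analytic. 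Thus \( \setof{ x }{ \nu ( \vsection{A}{x} ) > a } \) is analytic for every \( a \), and finally \( \setof{ x }{ \nu ( \vsection{A}{x} ) \geq a } = \bigcap\sb{n} \setof{ x }{ \nu ( \vsection{A}{x} ) > a - 1/n } \) is a countable intersection of analytic sets, hence analytic, completing part~\ref{th:sections2-b}.
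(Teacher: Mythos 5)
The paper itself does not prove this theorem: it is quoted from the literature, with a pointer to Theorems 29.31 and 29.36 of Kechris's book, so your proposal has to be measured against those classical arguments, which it essentially reconstructs. Your part~\ref{th:sections2-b} is the standard capacity proof and is correct and essentially complete: the set function \( I ( S ) = \nu\sp{*} ( \appl{\proj\sb{Y}}{S} ) \) on \( Y \times \pre{\omega}{\omega} \) is indeed a capacity (monotonicity is clear; continuity under increasing unions follows from the same property of the outer measure of a Borel measure, via measurable hulls; right-continuity over compacta follows from outer regularity of \( \nu \) applied to the compact set \( \appl{\proj\sb{Y}}{\hat{K}} \)); Choquet's theorem then applies to the fibre \( \vsection{F}{x} \), which is closed but in general not \( \sigma \)-compact, so capacitability is genuinely needed there; and your diagnosis of why plain inner regularity of \( \nu \) is insufficient --- a compact subset of \( \vsection{A}{x} \) need not lift to a compact subset of \( \vsection{F}{x} \) --- is exactly the right point. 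The closedness of \( \setof{ ( x , \hat{K} ) }{ \hat{K} \subseteq \vsection{F}{x} } \), the upper semicontinuity of \( \hat{K} \mapsto \nu ( \appl{\proj\sb{Y}}{\hat{K}} ) \), and the concluding countable intersection for \( \geq a \) are all correct.

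Part~\ref{th:sections2-a} is another matter: the whole mathematical content of Novikov's theorem sits in the step you introduce with ``Granting this'', so as written there is a genuine gap, and your description of the missing mechanism (``pruning/boundedness'') points in a slightly wrong direction --- no boundedness principle is involved. The localization step (non-meager in \( U \) iff comeager in some basic \( \Nbhd\sb{s} \subseteq U \), via the Baire property of analytic sets) and the unfolded Banach--Mazur characterization are fine; what must be added is the \emph{quasi-strategy} characterization of winning in closed games. In the unfolded game attached to \( ( x , V ) \) the moves come from a fixed countable set and player II's payoff is closed; for such games II has a winning strategy iff II has a winning quasi-strategy, i.e.\ a set \( W \) of positions containing the initial position, closed under every legal move of I, in which II always has at least one available move, and none of whose positions is already lost for II. Since the payoff is closed, every infinite play through such a \( W \) produces a point \( y \) of \( V \) together with a witness \( z \) such that \( ( y , z ) \in \vsection{F}{x} \), so \( W \) is winning; conversely, the positions compatible with a winning strategy form such a \( W \). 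Because \( W \) ranges over subsets of a fixed countable set and ``\( W \) is a winning quasi-strategy for II in the game attached to \( x \)'' is a Borel (indeed arithmetic) condition on the pair \( ( x , W ) \), the existence of \( W \) is a \( \bSigma\sp{1}\sb{1} \) condition on \( x \). This greatest-fixed-point trick is what replaces the naive ``\( \exists \) strategy \( \forall \) play'' count of \( \bSigma\sp{1}\sb{2} \); once it is inserted, your outline for part~\ref{th:sections2-a} becomes precisely the proof the paper cites.
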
 

We will use later the following known result (see for example~\cite[Theorem 1.2]{Camerlo:2002sf}). 

\begin{theorem}\label{th:sectionKechris}
Let \( X \) be a standard Borel space, let \( Y \) be a Polish space, and let \( A\subseteq X \times Y \) be Borel.
Fix also \( B \), a Borel subset of \( \KK ( Y ) \).
Then \( \setofLR{ x\in X }{ \vsection{A}{ x } \in B } \) is a coanalytic subset of \( X \).
\end{theorem}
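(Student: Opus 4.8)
The plan is to imitate the proof that \( P\sb{=\omega} \) is coanalytic in Theorem~\ref{th:sections}: there an unbounded real quantifier was replaced, by Harrison's theorem, with one ranging over \( \varDelta\sp 1\sb 1 \)-points, and then absorbed by Kleene's theorem on restricted quantification; here the role of Harrison's theorem will be played by a theorem of Louveau. First I would reduce to the case in which \( Y \) is compact. Embedding \( Y \) as a \( \Gdelta \) subset of the Hilbert cube \( H = \pre{\omega}{[0;1]} \), the hyperspace \( \KK ( Y ) = \setof{ K \in \KK ( H ) }{ K \subseteq Y } \) is a \( \Gdelta \), hence Borel, subset of the compact metrizable space \( \KK ( H ) \), with the same Borel structure; so a Borel \( B \subseteq \KK ( Y ) \) is Borel in \( \KK ( H ) \), and as every section \( \vsection{A}{x} \) is automatically contained in \( Y \) we have \( \vsection{A}{x} \in B \) iff \( \vsection{A}{x} \) is a compact subset of \( H \) lying in \( B \). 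Thus I may assume \( Y = H \), so that for sections ``compact'' and ``closed in \( H \)'' coincide. Since coanalyticity is invariant under Borel isomorphism I may also take \( X \) to be a recursively presented Polish space, fix a recursive basis \( \setof{ N\sb i }{ i \in \omega } \) of open sets of \( H \), and fix \( a , b \in \pre{\omega}{\omega} \) with \( A \) being \( \varDelta\sp 1\sb 1 ( a ) \) and \( B \) being \( \varDelta\sp 1\sb 1 ( b ) \); it then suffices to prove that the set in question is \( \varPi\sp 1\sb 1 ( a \oplus b ) \).

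The first ingredient is that \( C = \setof{ x }{ \vsection{A}{x} \text{ is closed} } \) is coanalytic. Indeed, for a compatible metric \( d \) the condition ``\( y \in \overline{ \vsection{A}{x} } \)'' unfolds as \( \FORALL{n} \EXISTS{z} ( ( x , z ) \in A \wedge d ( y , z ) < 2\sp {-n} ) \) and is therefore \( \varSigma\sp 1\sb 1 ( x \oplus a ) \), whence \( \vsection{A}{x} \) is closed iff \( \FORALL{y} ( y \in \overline{ \vsection{A}{x} } \implies ( x , y ) \in A ) \), which is \( \varPi\sp 1\sb 1 ( a ) \). The heart of the argument is the claim that \emph{for every \( x \in C \) the compact set \( \vsection{A}{x} \) is a \( \varDelta\sp 1\sb 1 ( x \oplus a ) \)-point of \( \KK ( H ) \)}. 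Here compactness is essential: for \( x \in C \) the complement \( H \setminus \vsection{A}{x} = \setof{ y }{ ( x , y ) \notin A } \) is an \emph{open} set which is \( \varDelta\sp 1\sb 1 ( x \oplus a ) \) as a subset of \( H \), and by Louveau's theorem a \( \varDelta\sp 1\sb 1 ( z ) \) set which is open is already \( \varSigma\sp 0\sb 1 ( q ) \) for some \( \varDelta\sp 1\sb 1 ( z ) \) real \( q \); consequently \( \setof{ i }{ \vsection{A}{x} \cap N\sb i = \emptyset } = \setof{ i }{ N\sb i \subseteq H \setminus \vsection{A}{x} } \) is recursively enumerable in such a \( q \), hence \( \varDelta\sp 1\sb 1 ( x \oplus a ) \). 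Thus the ``meets'' profile \( \setof{ i }{ \vsection{A}{x} \cap N\sb i \neq \emptyset } \) is \( \varDelta\sp 1\sb 1 ( x \oplus a ) \), which says exactly that \( \vsection{A}{x} \), as a point of the recursively presented space \( \KK ( H ) \), is \( \varDelta\sp 1\sb 1 ( x \oplus a ) \).

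With the claim in hand I would finish as in the \( P\sb{=\omega} \) case. The relation ``\( K = \vsection{A}{x} \)'', i.e. \( \FORALL{y} ( y \in K \implies ( x , y ) \in A ) \wedge \FORALL{y} ( ( x , y ) \in A \implies y \in K ) \), is \( \varPi\sp 1\sb 1 ( x \oplus a ) \), both conjuncts being universal quantifications of \( \varDelta\sp 1\sb 1 \) matrices (using that \( \setof{ ( K , y ) }{ y \in K } \) is closed in \( \KK ( H ) \times H \)), and ``\( K \in B \)'' is \( \varDelta\sp 1\sb 1 ( b ) \). Since for \( x \in C \) the claim provides a \( \varDelta\sp 1\sb 1 ( x \oplus a ) \)-point \( K \) of \( \KK ( H ) \) equal to \( \vsection{A}{x} \), for such \( x \)
\[
 \vsection{A}{x} \in B \iff \EXISTS{ K \in \varDelta\sp 1\sb 1 ( x \oplus a ) \cap \KK ( H ) } \bigl ( K = \vsection{A}{x} \wedge K \in B \bigr ) ,
\]
whose matrix is \( \varPi\sp 1\sb 1 ( x \oplus a \oplus b ) \). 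By Kleene's theorem on restricted quantification~\cite[Theorem 4D.3]{Moschovakis:2009fk} the quantifier \( \EXISTS{ K \in \varDelta\sp 1\sb 1 ( x \oplus a ) } \) leaves the formula \( \varPi\sp 1\sb 1 ( x \oplus a \oplus b ) \). Intersecting with the coanalytic set \( C \) gives
\[
 \setof{ x }{ \vsection{A}{x} \in B } = C \cap \setofLR{ x }{ \EXISTS{ K \in \varDelta\sp 1\sb 1 ( x \oplus a ) \cap \KK ( H ) } ( K = \vsection{A}{x} \wedge K \in B ) } \in \bPi\sp 1\sb 1 .
\]

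The main obstacle is precisely the italicized claim. Without the promotion to an effectively open complement furnished by Louveau's theorem one only obtains that ``\( K = \vsection{A}{x} \)'' is coanalytic and that the section map is ``mixed'' (the preimages of the two kinds of Vietoris subbasic open sets being respectively analytic and coanalytic), so the naive manipulations stall at a difference of two coanalytic sets and the restricted-quantifier trick cannot be applied. It is also here that the hypothesis that \( A \) be \emph{Borel}, rather than merely analytic, is used—for properly analytic \( A \) the set \( H \setminus \vsection{A}{x} \) is only coanalytic and the claim fails, in keeping with the failure of the statement noted after Theorem~\ref{th:sections} (e.g.\ taking \( B = \setof{ K }{ \card{ K } = 1 } \) recovers Lusin's uniqueness theorem).
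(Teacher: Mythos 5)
Your proposal cannot be checked against a proof in the paper, because the paper gives none: Theorem~\ref{th:sectionKechris} is quoted as a known result, with a pointer to \cite[Theorem 1.2]{Camerlo:2002sf}. What you have written is a self-contained substitute, and it is correct in outline and in almost all details. It is also very much in the spirit of the paper: it runs the same effective scheme used in the proof of Theorem~\ref{th:sections} for \( P\sb{=\omega} \) (replace an unbounded quantifier by one restricted to \( \varDelta\sp 1\sb 1 \)-points, then absorb it via Kleene's restricted quantification), with Louveau's separation theorem producing the \( \varDelta\sp 1\sb 1 \) witnesses in place of Harrison's theorem. The preliminary reduction to the Hilbert cube \( H \) is exactly the right move: it makes ``compact'' and ``closed in \( H \)'' coincide for sections, and it realizes \( \KK ( Y ) \) as a \( \Gdelta \) (hence Borel) subspace of \( \KK ( H ) \) with the induced Borel structure, so that \( B \) may be regarded as a Borel subset of \( \KK ( H ) \). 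The coanalyticity of \( C = \setof{ x }{ \vsection{A}{x} \text{ is closed in } H } \), the equivalence, for \( x \in C \), of \( \vsection{A}{x} \in B \) with the existence of a \( \varDelta\sp 1\sb 1 ( x \oplus a ) \)-point \( K \) of \( \KK ( H ) \) with \( K = \vsection{A}{x} \) and \( K \in B \), and the final intersection with \( C \) are all sound; so is your closing explanation of why Borelness of \( A \) cannot be relaxed to analyticity.

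Two local points need repair, though neither is a structural gap. First, after Louveau's theorem gives \( H \setminus \vsection{A}{x} = \bigcup\sb{ j \in W } N\sb j \) with \( W \) recursively enumerable in some \( q \in \varDelta\sp 1\sb 1 ( x \oplus a ) \), you assert that \( \setof{ i }{ N\sb i \subseteq H \setminus \vsection{A}{x} } \) is recursively enumerable in \( q \). It is not: containment of an open set in an open set has no finite witness (consider \( ( 0 ; 1 ) \subseteq \bigcup\sb n ( 1/n ; 1 ) \)). The fix is to write \( N\sb i \) as an effective union of compact rational boxes \( C\sb{ i , m } \); then \( N\sb i \cap \vsection{A}{x} = \emptyset \) iff for every \( m \) some finite \( F \subseteq W \) covers \( C\sb{ i , m } \) (compactness of \( C\sb{ i , m } \)), a condition that is \( \varPi\sp 0\sb 2 ( q ) \), hence arithmetical in \( q \) and still \( \varDelta\sp 1\sb 1 ( x \oplus a ) \) --- which is all your argument needs. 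Second, being a \( \varDelta\sp 1\sb 1 ( x \oplus a ) \)-point of \( \KK ( H ) \) means that the neighborhood diagram with respect to the Vietoris basis is \( \varDelta\sp 1\sb 1 ( x \oplus a ) \), and that basis involves containment conditions \( K \subseteq N\sb{ i\sb 0 } \cup \dots \cup N\sb{ i\sb k } \) in addition to the ``meets'' conditions, so the meets profile is not literally the whole diagram. For compact \( K \) in the compact space \( H \) the containment conditions are arithmetical in the meets profile (for some \( n \), every box of a mesh-\( 2\sp{ - n } \) finite grid that meets \( K \) has closure formally inside the given finite union), so your claim is true, but this routine verification is being hidden by the phrase ``says exactly that''.
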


\section{Some basic results}
The measure \( \mu \) is the unique probability measure on \( \pre{ \omega }{2} \) satisfying \( \mu ( \Nbhd\sb s ) = 2\sp {-\lh s } \) for all \( s \in \pre{ < \omega }{2} \).
Let \( \MEAS \) be the collection of all \( \mu \)-measurable sets, and let \( \NULL = \setofLR{ A \in \MEAS}{ \mu ( A ) = 0 } \).

For \( A , B \in \MEAS \) write \( A =\sb \mu B \) just in case \( A \subseteq\sb \mu B \) and \( B \subseteq\sb \mu A \), where \( A \subseteq\sb \mu B \IFF A \setminus B \in \NULL \).
Clearly \( =\sb \mu \) is an equivalence relation, \( \NULL = \setofLR{A\in \MEAS}{ A =\sb \mu \emptyset } \) is an ideal, and the quotient 
\[
\MALG = \frac{ \MEAS}{ \NULL} = \frac{ \Bor }{\Bor \cap \NULL} 
\]
is a complete boolean algebra, called the \markdef{measure algebra}.
It is a Polish space with distance \( \delta ( \eq{A} , \eq{B} ) = \mu ( A \symdif B ) \).
The measure \( \mu \) induces a function on the quotient \( \hat{ \mu } \colon \MALG \to [ 0 ; 1 ] \), \( \hat{ \mu } ( \eq{A} ) = \mu ( A ) \).

\subsection{The density function}
A set \( A \subseteq \pre{ \omega }{2} \) will always be assumed to be measurable.
For \( z \in \pre{ \omega }{2} \), the \markdef{density of \( z \) at \( A \)} is
\[
 \density\sb A ( z ) = \lim\sb { n \to \infty } \frac{ \mu ( A \cap \Nbhd\sb {z \restriction n} )}{ \mu ( \Nbhd\sb { z \restriction n } ) } . 
\]
Note that
\[
\frac{ \mu ( A \cap \Nbhd\sb {z \restriction n} )}{ \mu ( \Nbhd\sb { z \restriction n } ) } = 2\sp {n} \cdot \mu ( A \cap \Nbhd\sb {z \restriction n} ) = \mu ( \LOC{A}{ z \restriction n} ) .
\]
The \markdef{upper density} and \markdef{lower density} are 
\[
\upperdensity \sb A ( z ) = \limsup\sb { n \to \infty } \frac{ \mu ( A \cap \Nbhd\sb {z \restriction n} )}{ \mu ( \Nbhd\sb { z \restriction n } ) } ,\qquad \lowerdensity \sb A ( z ) = \liminf\sb { n \to \infty } \frac{ \mu ( A \cap \Nbhd\sb {z \restriction n} )}{ \mu ( \Nbhd\sb { z \restriction n } ) } ,
\]
and the \markdef{oscillation of \( z \) at} \( A \) is
\[
\oscillation\sb A ( z ) = \upperdensity \sb A ( z ) - \lowerdensity \sb A ( z ) .
\]
The limit \( \density\sb A ( z ) \) does not exist if and only if \( \oscillation\sb A ( z ) > 0 \), and in this case we say that \( z \) is \markdef{blurry} for \( A \).
If instead \( \density\sb A ( z ) \) exists but it is not \( 0 \) or \( 1 \), then we say that \( z \) is \markdef{sharp} for \( A \).
The sets of all points that are blurry or sharp for \( A \) are denoted by \( \Blur ( A ) \) and \( \Sharp ( A ) \), and \( \Exc ( A ) = \Blur ( A ) \cup \Sharp ( A ) \) is the set of all points that are \markdef{exceptional} for \( A \).
Note that if \( A =\sb \mu B \) then 
\begin{align*}
 \density\sb { A\sp \complement } ( z ) & = 1 - \density\sb A ( z ) ,
& \density\sb A ( z ) & = \density\sb B ( z ) ,
\\
 \oscillation\sb { A\sp \complement } ( z ) & = \oscillation\sb A ( z ) ,
&
\oscillation\sb A ( z ) & = \oscillation\sb B ( z ) ,
\end{align*}
in the sense that if one of the two sides of the equations exists, then so does the other one, and their values are equal.

The Lebesgue density theorem says that for all \( A \in \MEAS \) 
\[ 
\Phi ( A ) \equalsdef \setof{z \in A}{ \density\sb A ( z ) = 1 } \in \MEAS 
\]
and that \( A \symdif \Phi ( A ) \in \NULL \).
In fact \( \Phi ( A ) \in \bPi\sp 0\sb 3 \) for all \( A \).
The function \( \Phi \) is \( =\sb \mu \) invariant, so it induces a function \( \hat{ \Phi } \) on the measure algebra.

\subsection{Complexity of sets in \( \MALG \) and in \( \KK \)}\label{subsec:complexityofsetsinMALG}
From~\cite[]{Andretta:2015kq} we have that 
\[
f\sb n \colon \pre{\omega}{2} \times \MALG \to [ 0 ; 1 ] , \quad ( x , \eq{A} ) \mapsto \mu ( A\cap \Nbhd\sb {x \restriction n } ) 
\] 
is continuous for all \( n \in \omega \), while the three functions \( \upperdensity , \lowerdensity , \oscillation \colon \pre{\omega}{2} \times \MALG \to [ 0 ; 1 ] \) defined by
\begin{align*}
 & ( x , \eq{A} ) \mapsto \upperdensity\sb A ( x ) , && ( x , \eq{A} ) \mapsto \lowerdensity\sb A  ( x ) , && ( x , \eq{A} ) \mapsto \oscillation\sb A ( x ) 
\end{align*}
are in \( \Baireclass\sb 2 \).
If \( \MALG \) is replaced by \( \KK \), the resulting functions (which are still denoted by the same letter) \( \pre{\omega}{2}\times \KK \to [ 0 ; 1 ] \) are: \( f\sb n \in \Baireclass\sb 1 \) and \( \upperdensity , \lowerdensity , \oscillation \in \Baireclass\sb 3 \), since they are obtained by composing with the function 
\begin{equation}\label{eq:embeddingKintoMALG}
 j \colon \KK \to \MALG , \quad K \mapsto \eq{K} 
\end{equation}
which is in \( \Baireclass\sb 1 \), while \( \ran j \) is a \( \bPi\sp {0}\sb {3} \)-complete subset of \( \MALG \)---see~\cite[]{Andretta:2015kq} for details.

One can ask whether these results are sharp: seen as a function with domain \( \pre{\omega}{2} \times \KK \), the function \( f\sb n \) is not continuous, so it is indeed in \( \Baireclass\sb 1\setminus \Baireclass\sb 0 \), while in Corollary~\ref{cor:D^-} it is shown that \( \lowerdensity \in \Baireclass\sb 2 \setminus \Baireclass\sb 1 \).
Going back to \( \MALG \), the set
\[
\widehat{\mathcal{A}}\sb {\bowtie} = \setof{ ( \eq{A} , z , n , r ) \in \MALG \times \pre{\omega}{2} \times \omega \times [ 0 ; 1 ] }{ \mu ( A \cap \Nbhd\sb { z \restriction n} ) \bowtie r } 
\]
is \( \bSigma\sp 0\sb 1 \), where \( \bowtie \) stands for \( < \) or \( > \). 
If instead \( \bowtie \) denotes \( \leq \) or \( \geq \) then \( \widehat{\mathcal{A}}\sb {\bowtie} \in \bPi\sp {0}\sb {1} \) and hence \( \widehat{\mathcal{A}} = \setof{ ( \eq{A} , z , n , r ) \in \MALG \times \pre{\omega}{2} \times \omega \times [ 0 ; 1 ] }{ \mu ( A \cap \Nbhd\sb { z \restriction n} ) = r } \in \bPi\sp {0}\sb {1} \). 
Similarly \( \widehat{\mathcal{B}} = \setofLR{( \eq{A} , z , r ) \in \MALG \times \pre{\omega }{2} \times [ 0 ; 1 ] }{ \density \sb A ( z ) = r } \in \bPi\sp {0}\sb {3} \).

We now look at the analogous sets in \( \KK \).
Let
\begin{align*}
\mathcal{A}\sb {\bowtie} & = \setofLR{( K , z , n , r ) \in \KK \times \pre{\omega }{2} \times \omega \times [ 0 ; 1 ] }{\mu ( K \cap \Nbhd\sb {z \restriction n } ) \bowtie r } 
\\
\mathcal{B}\sp+ \sb { \bowtie } & = \setofLR{( K , z , r ) \in \KK \times \pre{\omega }{2} \times [ 0 ; 1 ] }{ \upperdensity \sb K ( z ) \bowtie r }
\\
\mathcal{B}\sp - \sb { \bowtie } & = \setofLR{( K , z , r ) \in \KK \times \pre{\omega }{2} \times [ 0 ; 1 ] }{ \lowerdensity \sb K ( z ) \bowtie r }
\end{align*}
where \( \bowtie \) is one of the ordering relations: \( < \), \( > \), \( \leq \), and \( \geq \), and let \( \mathcal{A} = \mathcal{A}\sb { \leq} \cap \mathcal{A}\sb { \geq } \) and \( \mathcal{B} = \mathcal{B}\sp + \sb { \leq} \cap \mathcal{B} \sp - \sb { \geq } \), that is 
\begin{align*}
\mathcal{A} & = \setofLR{( K , z , n , r ) \in \KK \times \pre{\omega }{2} \times \omega \times [ 0 ; 1 ] }{\mu ( K \cap \Nbhd\sb {z \restriction n } ) = r } 
\\
 \mathcal{B} & = \setofLR{( K , z , r ) \in \KK \times \pre{\omega }{2} \times [ 0 ; 1 ] }{ \density \sb K ( z ) = r } .
\end{align*}
Note that the complement of \( \mathcal{A}\sb {<} \) is \( \mathcal{A}\sb {\geq} \) and the complement of \( \mathcal{A}\sb {>} \) is \( \mathcal{A}\sb {\leq} \) (and similarly for \( \mathcal{B}\sp \pm\sb {\bowtie} \)), so we can cut-down the verifications in half when computing the complexity of these sets.

\begin{lemma}\label{lem:measureofcompactsinCantor}
\begin{enumerate-(a)}
\item\label{lem:measureofcompactsinCantor-a}
 \( \mathcal{A}\sb {<} \in \bSigma\sp {0}\sb {1} \) and hence \( \mathcal{A}\sb {\geq} \in \bPi\sp {0}\sb {1} \); \( \mathcal{A}\sb {\leq} \in \bPi\sp {0}\sb {2} \) and hence \( \mathcal{A}\sb {>} \in \bSigma\sp {0}\sb {2} \). 
Therefore \( \mathcal{A} \) is \( \bPi\sp {0}\sb {2} \).

\item\label{lem:measureofcompactsinCantor-b}
\( \mathcal{B}\sp -\sb {\leq} \) is \( \bPi\sp {0}\sb {2} \), \( \mathcal{B}\sp -\sb {\geq} , \mathcal{B}\sp +\sb {\geq } \) are \( \bPi\sp {0}\sb {3} \), and \( \mathcal{B}\sp +\sb {\leq} \) is \( \bPi\sp {0}\sb {4} \); therefore \( \mathcal{B}\sp -\sb {>} \) is \( \bSigma\sp {0}\sb {2} \), \( \mathcal{B}\sp -\sb {<} , \mathcal{B}\sp +\sb {<} \) are \( \bSigma\sp {0}\sb {3} \), and \( \mathcal{B}\sp +\sb { > } \) is \( \bSigma\sp {0}\sb {4} \).
Therefore \( \mathcal{B} \) is \( \bPi\sp {0}\sb {4} \).
\end{enumerate-(a)}
\end{lemma}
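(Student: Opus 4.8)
The plan is to base everything on Part~\ref{lem:measureofcompactsinCantor-a}, whose only real content is a monotone clopen approximation of \( \mu ( K \cap \Nbhd\sb s ) \) from above, made available by the compactness of \( K \). For \( m \geq \lh ( s ) \) I would set
\[
K\sp {( m )} = \bigcup \setof{ \Nbhd\sb t }{ t \in \pre{ m }{ 2 } \wedge K \cap \Nbhd\sb t \neq \emptyset } ,
\]
a clopen set with \( K \subseteq K\sp {( m + 1 )} \subseteq K\sp {( m )} \) and \( \bigcap\sb m K\sp {( m )} = K \) (here closedness of \( K \) is used); by continuity of \( \mu \) from above, \( \mu ( K \cap \Nbhd\sb s ) = \inf\sb m \mu ( K\sp {( m )} \cap \Nbhd\sb s ) \) is a non-increasing limit. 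The decisive observation is that, for fixed \( m \) and \( n = \lh ( s ) \), the value \( \mu ( K\sp {( m )} \cap \Nbhd\sb {z \restriction n} ) \) depends only on \( z \restriction n \) and on which of the finitely many level-\( m \) cylinders meet \( K \); since each predicate \( K \cap \Nbhd\sb t \neq \emptyset \) is clopen in \( \KK \) (because \( \Nbhd\sb t \) is clopen), this is a locally constant, hence continuous, function of \( ( K , z ) \).

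From this I would read off Part~\ref{lem:measureofcompactsinCantor-a} directly. Since \( \mu ( K \cap \Nbhd\sb {z \restriction n} ) < r \) iff \( \mu ( K\sp {( m )} \cap \Nbhd\sb {z \restriction n} ) < r \) for some \( m \), the set \( \mathcal{A}\sb {<} \) is a countable union of open sets, so \( \mathcal{A}\sb {<} \in \bSigma\sp 0\sb 1 \) and \( \mathcal{A}\sb {\geq} \in \bPi\sp 0\sb 1 \). Writing \( \mu ( K \cap \Nbhd\sb {z \restriction n} ) \leq r \) as \( \forall k \, \exists m \, ( \mu ( K\sp {( m )} \cap \Nbhd\sb {z \restriction n} ) < r + 2\sp {-k} ) \) places \( \mathcal{A}\sb {\leq} \) in \( \bPi\sp 0\sb 2 \) and \( \mathcal{A}\sb {>} \) in \( \bSigma\sp 0\sb 2 \); hence \( \mathcal{A} = \mathcal{A}\sb {\leq} \cap \mathcal{A}\sb {\geq} \in \bPi\sp 0\sb 2 \).

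For Part~\ref{lem:measureofcompactsinCantor-b} I would first transfer these bounds to the normalized quantities \( d\sb n ( K , z ) = 2\sp n \mu ( K \cap \Nbhd\sb {z \restriction n} ) = \mu ( \LOC{K}{z \restriction n} ) \): since \( d\sb n ( K , z ) \bowtie c \) iff \( \mu ( K \cap \Nbhd\sb {z \restriction n} ) \bowtie c \cdot 2\sp {-n} \), composing with the continuous substitution \( ( K , z , r ) \mapsto ( K , z , n , ( r \pm 2\sp {-k} ) 2\sp {-n} ) \) shows that, for each fixed \( n \), the sets \( \setof{ ( K , z , r ) }{ d\sb n ( K , z ) < r } \) and \( \setof{ ( K , z , r ) }{ d\sb n ( K , z ) \geq r } \) are open and closed, while the \( \leq \) and \( > \) versions are \( \bPi\sp 0\sb 2 \) and \( \bSigma\sp 0\sb 2 \). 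Then I would feed these into the quantifier expansions of \( \liminf \) and \( \limsup \), choosing the strictness of the inner inequality so as to keep each inner predicate open or closed:
\begin{align*}
\lowerdensity\sb K ( z ) \leq r &\iff \forall k \, \forall N \, \exists n \geq N \; ( d\sb n ( K , z ) < r + 2\sp {-k} ) , \\
\lowerdensity\sb K ( z ) \geq r &\iff \forall k \, \exists N \, \forall n \geq N \; ( d\sb n ( K , z ) \geq r - 2\sp {-k} ) , \\
\upperdensity\sb K ( z ) \geq r &\iff \forall k \, \forall N \, \exists n \geq N \; ( d\sb n ( K , z ) \geq r - 2\sp {-k} ) , \\
\upperdensity\sb K ( z ) \leq r &\iff \forall k \, \exists N \, \forall n \geq N \; ( d\sb n ( K , z ) < r + 2\sp {-k} ) .
\end{align*}
Counting quantifiers over these open/closed inner sets yields \( \mathcal{B}\sp -\sb {\leq} \in \bPi\sp 0\sb 2 \), \( \mathcal{B}\sp -\sb {\geq} , \mathcal{B}\sp +\sb {\geq} \in \bPi\sp 0\sb 3 \), and \( \mathcal{B}\sp +\sb {\leq} \in \bPi\sp 0\sb 4 \), and complementation gives the dual \( \bSigma \) bounds. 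Finally, since \( \lowerdensity\sb K \leq \upperdensity\sb K \) always, \( \density\sb K ( z ) = r \) iff \( \upperdensity\sb K ( z ) \leq r \) and \( \lowerdensity\sb K ( z ) \geq r \), so \( \mathcal{B} = \mathcal{B}\sp +\sb {\leq} \cap \mathcal{B}\sp -\sb {\geq} \in \bPi\sp 0\sb 4 \).

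The main obstacle—indeed the only step that is not bookkeeping—is the clopen approximation of the first paragraph: one must verify that \( K \mapsto \mu ( K\sp {( m )} \cap \Nbhd\sb s ) \) is locally constant (this is exactly where compactness and the clopen structure of \( \pre{\omega}{2} \) are essential; for a general closed subset of a Polish space the hitting sets would be merely open and \( \mathcal{A}\sb {<} \) would fail to be open), together with the monotonicity and the correct limit of the \( K\sp {( m )} \). Once that is in place, the remainder is a disciplined count of quantifiers, where the only care required is to choose strict versus non-strict inner inequalities so that each inner predicate sits in \( \bSigma\sp 0\sb 1 \) or \( \bPi\sp 0\sb 1 \) and the advertised levels are not overshot.
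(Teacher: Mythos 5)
Your proposal is correct, and part~\ref{lem:measureofcompactsinCantor-b} is essentially the paper's own argument: the paper uses the same four quantifier expansions of \( \liminf \)/\( \limsup \) (phrased with \( \forall \varepsilon > 0 \) and \( \forall\sp{\infty} n \) instead of your \( \forall k \, \exists N \, \forall n \geq N \)), feeds in the sets from part~\ref{lem:measureofcompactsinCantor-a}, and counts quantifiers exactly as you do; also \( \mathcal{B} = \mathcal{B}\sp{+}\sb{\leq} \cap \mathcal{B}\sp{-}\sb{\geq} \) is the paper's definition of \( \mathcal{B} \), so your last step is immediate. The one place where you genuinely diverge is the key openness fact in part~\ref{lem:measureofcompactsinCantor-a}. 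The paper argues pointwise in the Vietoris topology: given \( \mu ( K \cap \Nbhd\sb{z \restriction n} ) < r \), it picks an open \( U \) with \( K \cap \Nbhd\sb{z \restriction n} \subseteq U \subseteq \Nbhd\sb{z \restriction n} \) and \( \mu ( U ) < r \), and observes that every \( K' \) in a suitable Vietoris neighborhood of \( K \) (namely \( K' \subseteq U \cup \Nbhd\sb{z \restriction n}\sp{\complement} \), which is open because \( \Nbhd\sb{z \restriction n} \) is clopen) satisfies \( \mu ( K' \cap \Nbhd\sb{z \restriction n} ) \leq \mu ( U ) < r' \) for all \( r' > \mu ( U ) \); it then gets \( \mathcal{A}\sb{\leq} \in \bPi\sp{0}\sb{2} \) by the same \( \forall \varepsilon \) device you use. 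Your route instead exhibits \( \mathcal{A}\sb{<} \) globally as a countable union of sets defined by the locally constant functions \( ( K , z ) \mapsto \mu ( K\sp{(m)} \cap \Nbhd\sb{z \restriction n} ) \). Both arguments rest on the same two ingredients --- compactness of \( K \) and the clopen basis of \( \pre{\omega}{2} \) --- but yours is more explicit and in effect proves the stronger, reusable statement that \( K \mapsto \mu ( K \cap \Nbhd\sb{s} ) \) is a decreasing infimum of continuous (locally constant) functions, hence upper semicontinuous, while the paper's pointwise argument is shorter and skips the level-by-level combinatorics. Either way the rest of the lemma is the same bookkeeping, so your proof is a valid substitute.
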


\begin{proof}
\ref{lem:measureofcompactsinCantor-a}
Let us check that \( \mathcal{A}\sb {<} \in \bSigma\sp 0\sb 1 \) and therefore \( \mathcal{A}\sb {\geq} \in \bPi\sp 0\sb 1 \).
Fix \( ( K , z , n , r ) \in \mathcal{A}\sb {<} \), that is to say: \( \mu ( K \cap \Nbhd\sb { z \restriction n } ) < r \).
We must find an open subset of \( \KK \times \pre{\omega}{2} \times \omega \times [ 0 ; 1 ] \) containing \( ( K , z , n , r ) \) and included in \( \mathcal{A}\sb {<} \).
As \( \omega \) is discrete and \( z \mapsto \mu ( K \cap \Nbhd\sb { z \restriction n } ) \) is locally constant, it is enough to show that \( ( K' , z , n , r' ) \in \mathcal{A}\sb {<} \) for all \( K' \) sufficiently close to \( K \) and all \( r' \) sufficiently close to \( r \).
Let \( U \) be open and such that \( K \cap \Nbhd\sb { z \restriction n} \subseteq U \subseteq \Nbhd\sb { z \restriction n} \) and \( \mu ( U ) < r \).
Then for all \( K' \) sufficiently close to \( K \) and all \( r' > \mu ( U ) \) it follows that \( \mu ( K' \cap \Nbhd\sb { z \restriction n} ) < r' \).

As 
\[ 
( K , z , n , r ) \in \mathcal{A}\sb { \leq } \IFF r = 1 \vee \FORALL{ \varepsilon } [ 0 < \varepsilon \leq 1 - r \implies ( K , z , n , r + \varepsilon ) \in \mathcal{A}\sb {<} ] ,
\] 
then \( \mathcal{A}\sb {\leq} \in \bPi\sp 0\sb 2 \) and \( \mathcal{A}\sb {>} \in \bSigma\sp 0\sb 2 \).

\smallskip

\ref{lem:measureofcompactsinCantor-b}
The result follows from part~\ref{lem:measureofcompactsinCantor-a} since
\begin{align*}
\lowerdensity\sb K ( z ) \geq r & \IFF \FORALL{ \varepsilon > 0 } \FORALLS{\infty}{n} [ \mu ( K \cap \Nbhd\sb { z \restriction n } ) / 2\sp {-n} \geq r - \varepsilon ] && \text{and so } \mathcal{B}\sp -\sb {\geq} \in \bPi\sp {0}\sb {3}
\\
\lowerdensity\sb K ( z ) > r & \IFF \EXISTS{ \varepsilon > 0 } \FORALLS{\infty}{n} [ \mu ( K \cap \Nbhd\sb { z \restriction n } ) / 2\sp {-n} \geq r + \varepsilon ] && \text{and so } \mathcal{B}\sp -\sb {>} \in \bSigma\sp {0}\sb {2}
\\
\upperdensity\sb K ( z ) < r & \IFF \EXISTS{ \varepsilon > 0 } \FORALLS{\infty}{n} [ \mu ( K \cap \Nbhd\sb {z \restriction n } ) / 2\sp {- n } \leq r - \varepsilon ] && \text{and so } \mathcal{B}\sp +\sb {<} \in \bSigma\sp {0}\sb {3} 
\\
\upperdensity\sb K ( z ) \leq r & \IFF \FORALL{ \varepsilon > 0} \FORALLS{\infty}{n} [ \mu ( K \cap \Nbhd\sb {z \restriction n } ) / 2\sp {- n } < r + \varepsilon ] && \text{and so } \mathcal{B}\sp +\sb { \leq } \in \bPi\sp {0}\sb {4} .
\end{align*}
(Note that in the last equivalence we could replace \( < \) with \( \leq \), but that would not help in reducing the complexity as \( \mathcal{A}\sb {\leq} \) is \( \bPi\sp {0}\sb {2} \).)
\end{proof}

\begin{theorem}\label{th:measureofcompactsinCantor}
\begin{enumerate-(a)}
\item\label{th:measureofcompactsinCantor-a}
For every \( r \in \cointerval{ 0 }{ 1 } \) and every \( z\in \pre{\omega}{2} \), the section 
\[ 
\mathcal{A}\sb {\leq} \sp * = \setofLR{ K \in \KK }{ ( K , z , 0 , r ) \in \mathcal{A}\sb { \leq} } = \setof{ K \in \KK }{ \mu ( K ) \leq r }
\]
is \( \bPi\sp {0}\sb {2} \)-complete, and therefore its complement \( \mathcal{A}\sb {>} \sp * \) is \( \bSigma\sp {0}\sb {2} \)-complete.
Therefore \( \mathcal{A}\sb {\leq} \) and \( \mathcal{A}\sb {>} \) are, respectively \( \bPi\sp {0}\sb {2} \)-complete, and \( \bSigma\sp {0}\sb {2} \)-complete. 
Moreover \( \mathcal{A} \) is \( \bPi\sp {0}\sb {2} \)-complete.

\item\label{th:measureofcompactsinCantor-b}
 \( \mathcal{B}\sp -\sb {\leq} \) is \( \bPi\sp {0}\sb {2} \)-complete, \( \mathcal{B}\sp -\sb {\geq} \) is \( \bPi\sp {0}\sb {3} \)-complete; therefore \( \mathcal{B}\sp -\sb {>} \) is \( \bSigma\sp {0}\sb {2} \)-complete, \( \mathcal{B}\sp -\sb {<} \) is \( \bSigma\sp {0}\sb {3} \)-complete.
Moreover \( \mathcal{B} \) is \( \bPi\sp {0}\sb {3} \)-hard.
\end{enumerate-(a)}
\end{theorem}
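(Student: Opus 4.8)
The plan is to prove the hardness halves of all the asserted statements by exhibiting explicit continuous reductions into suitable sections of the sets in question; the matching upper bounds are already supplied by Lemma~\ref{lem:measureofcompactsinCantor}. Since for fixed \( z \) and \( r \) the maps \( K \mapsto ( K , z , 0 , r ) \) and \( K \mapsto ( K , z , r ) \) are continuous, \( \bGamma \)-hardness of a single section of a set yields \( \bGamma \)-hardness of the whole set (compose reductions); so it is enough to make one section of each set hard. Two gadgets carry the whole argument: for part~(a) a nested sequence of clopen sets exploiting the upper semicontinuity of \( K \mapsto \mu ( K ) \), and for part~(b) a decomposition of \( \Nbhd\sb{0\sp{(n)}} \) into ``rings'' that lets us realize prescribed local densities along \( z = 0\sp{(\omega)} \).

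For part~(a) fix \( r \in \cointerval{0}{1} \) and reduce the \( \bPi\sp0\sb2 \)-complete set \( \mathcal{N} = \setof{ y }{ \exists\sp{\infty} n \, y ( n ) = 1 } \) (a standard fact). The point is that, since \( \mu \) is upper semicontinuous on \( \KK \), a decreasing intersection of clopen sets can drop its measure in the limit. Fix a closed \( B \) with \( \mu ( B ) = r \) disjoint from a clopen \( W \) whose measure \( s\sb0 \) is dyadic with \( 0 < s\sb0 \leq 1 - r \), and given \( y \) build a decreasing sequence of clopen \( E\sb n \subseteq W \) with \( \mu ( E\sb n ) = 2\sp{-j\sb n} s\sb0 \), where \( j\sb n \) is the number of \( 1 \)'s in \( y \restriction n \) (put \( E\sb{n+1} = E\sb n \) if \( y ( n ) = 0 \), and \( E\sb{n+1} \) a clopen subset of \( E\sb n \) of half the measure if \( y ( n ) = 1 \)). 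Then \( K\sb y = B \cup \bigcap\sb n E\sb n \) is compact, \( y \mapsto K\sb y \) is continuous, and \( \mu ( K\sb y ) = r + s\sb0 \lim\sb n 2\sp{-j\sb n} \), which equals \( r \) when \( y \) has infinitely many \( 1 \)'s and equals \( r + s\sb0 2\sp{-j\sb0} > r \) when \( y \) has exactly \( j\sb0 \) of them. Hence \( y \in \mathcal{N} \iff \mu ( K\sb y ) \leq r \iff \mu ( K\sb y ) = r \), which simultaneously reduces \( \mathcal{N} \) to \( \mathcal{A}\sb{\leq}\sp* \) and to the section \( \setof{ K }{ \mu ( K ) = r } \) of \( \mathcal{A} \). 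This gives the \( \bPi\sp0\sb2 \)-hardness of \( \mathcal{A}\sb{\leq}\sp* \), of \( \mathcal{A}\sb{\leq} \), of \( \mathcal{A} \), and, by complementation, the \( \bSigma\sp0\sb2 \)-hardness of \( \mathcal{A}\sb{>}\sp* \) and \( \mathcal{A}\sb{>} \).

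For part~(b) work at \( z = 0\sp{(\omega)} \) and set \( R\sb m = \Nbhd\sb{0\sp{(m)} \conc 1} \), so that \( \Nbhd\sb{0\sp{(n)}} \) is, up to the null point \( 0\sp{(\omega)} \), the disjoint union \( \bigsqcup\sb{m \geq n} R\sb m \) with \( \mu ( R\sb m ) = 2\sp{-m-1} \). Writing \( w\sb m = \mu ( K \cap R\sb m ) / \mu ( R\sb m ) \in [ 0 ; 1 ] \), the local density at level \( n \) is \( d\sb n := \mu ( \LOC{K}{0\sp{(n)}} ) = \sum\sb{k \geq 0} w\sb{n+k} 2\sp{-k-1} \), so \( \lowerdensity\sb K ( 0\sp{(\omega)} ) = \liminf\sb n d\sb n \) and \( \upperdensity\sb K ( 0\sp{(\omega)} ) = \limsup\sb n d\sb n \). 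A telescoping computation shows that any target sequence \( ( d\sb n ) \) with \( w\sb m := 2 d\sb m - d\sb{m+1} \in [ 0 ; 1 ] \) is realized by taking \( K \cap R\sb m \) clopen of relative measure \( w\sb m \); fixing the dyadic \( r = 1/2 \) keeps all quantities dyadic, so the \( w\sb m \) are realizable and \( K \) depends continuously on the input. Prescribing ``dips'' in \( ( d\sb n ) \) now encodes the complete sets. Reducing \( \mathcal{N} \) by dipping \( d\sb n \) to \( 1/4 < r \) exactly where \( y ( n ) = 1 \) and sitting at \( 1 \) otherwise gives \( \liminf\sb n d\sb n \leq r \iff y \in \mathcal{N} \), so \( \mathcal{B}\sp-\sb{\leq} \) is \( \bPi\sp0\sb2 \)-hard. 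Reducing the \( \bPi\sp0\sb3 \)-complete set \( P\sb3 = \setof{ \alpha }{ \lim\sb n \alpha ( n ) = \infty } \) by letting the \( n \)-th dip reach a dyadic floor \( g ( \alpha ( n ) ) \), where \( g \) is increasing with \( g ( v ) < r \) and \( g ( v ) \to r \), and recovering to \( 1 \) between dips, gives \( \liminf\sb n d\sb n = \liminf\sb n g ( \alpha ( n ) ) \geq r \iff \alpha \in P\sb3 \), so \( \mathcal{B}\sp-\sb{\geq} \) is \( \bPi\sp0\sb3 \)-hard. Reducing \( P\sb3 \) by letting the \( n \)-th excursion have amplitude \( 2\sp{-\alpha ( n )} \) about \( r \) with \( d = r \) at block boundaries gives \( \density\sb K ( 0\sp{(\omega)} ) = r \iff d\sb n \to r \iff \alpha \in P\sb3 \), so \( \mathcal{B} \) is \( \bPi\sp0\sb3 \)-hard. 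With Lemma~\ref{lem:measureofcompactsinCantor} this yields the completeness of \( \mathcal{B}\sp-\sb{\leq} \), \( \mathcal{B}\sp-\sb{\geq} \) and their complements, and the stated hardness of \( \mathcal{B} \).

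The routine parts are the continuity of the reductions (each \( K \cap R\sb m \), resp.\ \( E\sb n \), depends on finitely many input coordinates) and checking \( 2 d\sb m - d\sb{m+1} \in [ 0 ; 1 ] \), which holds because we move in small dyadic steps. The genuine content, and the place requiring care, is the bookkeeping that converts the logical form of each complete set into the correct \( \liminf / \limsup \) behavior of \( ( d\sb n ) \): one must verify that between the engineered dips the density returns high enough that the \( \liminf \) is governed exactly by the dip floors, and that in the ``bad'' (\( \bSigma \)) case a single dip below \( r \), resp.\ a persistent oscillation of amplitude bounded away from \( 0 \), already forces the opposite membership. A secondary technical nuisance, already flagged, is that clopen sets have dyadic measure; this is dissolved by combining a fixed closed base \( B \) of measure \( r \) with dyadic clopen increments in part~(a), and by fixing \( r = 1/2 \) in part~(b).
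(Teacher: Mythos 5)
Your strategy---explicit continuous reductions from \( \mathcal{N} \) and from \( P_3 = \setof{\alpha \in \pre{\omega}{\omega}}{\lim_n \alpha(n) = \infty} \) into fixed sections of the sets in question---is legitimate, and in part (b) it is genuinely different from the paper, which obtains the \( \bPi^0_3 \)-hardness of \( \mathcal{B}^-_{\geq} \) and \( \mathcal{B} \) not by varying \( K \) but by fixing a compact \( K \) of positive measure and empty interior, setting \( r = 1 \), and quoting from an earlier paper that \( \Phi(K) = \setof{z}{\density_K(z) = 1} \) is \( \bPi^0_3 \)-complete in the variable \( z \). However, both halves of your argument contain a step that fails as written. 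In part (a), the map \( y \mapsto K_y = B \cup \bigcap_n E_n \) is not continuous, and your justification (``each \( E_n \) depends on finitely many input coordinates'') does not address the real issue: a decreasing intersection of compacta is only upper semicontinuous as a hyperspace operation, and the lower Vietoris condition can fail in the limit. Concretely, implement the halving in the natural way, say \( W = \Nbhd_s \) and \( E_n = \Nbhd_{s \conc 0^{(j_n)}} \) (always keep the left half). Then for \( y = 0^{(\omega)} \) you get \( K_y = B \cup \Nbhd_s \), while for \( y_N = 0^{(N)} \conc 1^{(\omega)} \) you get \( K_{y_N} = B \cup \set{ s \conc 0^{(\omega)} } \) for \emph{every} \( N \); so \( y_N \to y \) but \( K_{y_N} \not\to K_y \). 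Since the paper's notion of hardness requires continuous reductions, this is a genuine gap. It is repairable, but only with an idea you never state: the halving at stage \( n \) must be performed locally at scale \( 2^{-n} \) (inside each basic clopen piece of \( E_n \) of diameter \( 2^{-n} \), keep half), so that \( E_{n+1} \) is \( 2^{-n} \)-dense in \( E_n \), whence \( \bigcap_k E_k \) is \( 2^{-n+1} \)-dense in \( E_n \) and agreement of inputs up to \( N \) bounds the Hausdorff distance of outputs by \( 2^{-N+1} \). The paper sidesteps this entirely: it reduces the already-\( \bPi^0_2 \)-complete set \( \setof{K \in \KK}{\mu(K) = 0} \) (a dense, co-dense \( \Gdelta \)) via the intersection-free map \( K \mapsto K_0 \cup s \conc K \).

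In part (b), your own realizability constraint \( w_m = 2d_m - d_{m+1} \in [0;1] \) forbids the profiles you prescribe: if \( d_m = 1 \) then \( d_{m+1} \geq 2d_m - 1 = 1 \), so once the local density reaches \( 1 \) it is pinned at \( 1 \) forever (as it must be: relative measure \( 1 \) in \( \Nbhd_{0^{(m)}} \) forces relative measure \( 1 \) in every smaller neighborhood). Hence ``sitting at \( 1 \)'' between dips in the \( \mathcal{B}^-_{\leq} \) reduction and ``recovering to \( 1 \)'' between dips in the \( \mathcal{B}^-_{\geq} \) reduction are impossible---after the first dip no further dip could ever occur---and your claim that the constraint ``holds because we move in small dyadic steps'' is false: no step, however small, leaves \( 1 \). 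The fix is routine (rest at \( 3/4 \) and dip through \( 3/4 \to 1/2 \to 1/4 \to 1/2 \to 3/4 \), all constraint-compliant, and similarly for the \( g(\alpha(n)) \)-floors), and with it the rest of your part (b)---the ring decomposition \( R_m = \Nbhd_{0^{(m)} \conc 1} \), the telescoping realization of a target sequence \( (d_n) \), and the \( \mathcal{B} \)-reduction via excursions of amplitude \( 2^{-\alpha(n)} \) about \( 1/2 \), which never touches \( 1 \) and is correct as stated---does deliver the claimed hardness, and does so without invoking the external completeness theorem for \( \Phi(K) \) that the paper relies on. So: right architecture, two concrete defects, both fixable but neither fixed in what you wrote.
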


\begin{proof}
\ref{th:measureofcompactsinCantor-a}
Fix \( r \in \cointerval{ 0 }{ 1 } \).
We first prove that \( \mathcal{A}\sb {\leq}\sp * \) is \( \bPi\sp {0}\sb {2} \)-hard.
Let \( K\sb 0 \in \KK \) with \( \mu ( K\sb 0 ) = r \), and let \( s \in \pre{ < \omega }{ 2 } \) be such that \( \Nbhd\sb s \cap K\sb 0 = \emptyset \).
The function 
\[ 
h \colon \KK \to \KK , \quad K \mapsto K\sb 0 \cup s \conc K 
\] 
reduces \( \setof{ K \in \KK }{ \mu ( K ) = 0 } \) to \( \setof{ K \in \KK }{ \mu ( K ) = r } \subseteq \mathcal{A}\sb {\leq}\sp * \) and \( \setof{ K \in \KK }{ \mu ( K ) > 0 } \) to \( \setof{ K \in \KK }{ \mu ( K ) > r } \).
By Lemma~\ref{lem:measureofcompactsinCantor}\ref{lem:measureofcompactsinCantor-a}, \( \setof{ K \in \KK }{ \mu ( K ) = 0 } \) is \( \Gdelta \), and since it is dense and co-dense, it is actually \( \bPi\sp {0}\sb {2} \)-complete.
Therefore \( \mathcal{A}\sb {\leq}\sp * \) is \( \bPi\sp {0}\sb {2} \)-hard.
The same argument shows that \( \mathcal{A} \) is \( \bPi\sp {0}\sb {2} \)-complete.

\smallskip

\ref{th:measureofcompactsinCantor-b}
Fix \( r \in \cointerval{ 0 }{ 1 } \).
We show that \( \setof{ K \in \KK }{ \lowerdensity \sb K ( 0\sp {( \omega )} ) \leq r } \) is \( \bPi\sp {0}\sb {2} \)-complete, and therefore so is \( \mathcal{B}\sp -\sb { \leq } \).
We argued above that \( \setof{ K \in \KK }{ \mu ( K ) = 0 } \) is \( \bPi\sp {0}\sb {2} \)-complete, so it is enough to construct a continuous \( f \colon \KK \to \KK \) such that \( \mu ( K ) = 0 \iff \lowerdensity\sb { f ( K ) } ( 0\sp { ( \omega ) } ) \leq r \).
Using the function \( h \) from part~\ref{th:measureofcompactsinCantor-a} of the proof, let \( f ( K ) = \set{ 0\sp {( \omega ) } } \cup \bigcup\sb {n \in \omega } 0\sp {( n ) } \conc 1 \conc h ( K ) \).

For \( \mathcal{B}\sp -\sb { \geq } \) and \( \mathcal{B} \), notice that their section when \( r = 1 \) and \( K \) is of positive measure and empty interior, that is \( \setofLR{ z}{ \lowerdensity\sb K ( z ) = 1} = \Phi ( K ) \), is \( \bPi\sp {0}\sb {3} \)-complete by~\cite[][Theorem 1.3]{Andretta:2013uq}.
\end{proof}

\begin{question}
The sets \( \mathcal{B}\sp +\sb {\leq} \) and \( \mathcal{B} \) are \( \bPi\sp {0}\sb {4} \).
Are they \( \bPi\sp {0}\sb {4} \)-complete?
\end{question}

\begin{remarks}
\begin{enumerate-(a)}
\item
The set \( \setof{ K \in \KK }{ \mu ( K ) = 0 } \) is comeager in \( \KK \), and it is contained in \( \mathcal{C}\sb 1 = \setof{ K \in \KK }{ \Int K = \emptyset } \) and disjoint from \( \mathcal{C}\sb 2 = \setof{ K \in \KK }{ \Phi ( K ) \text{ is \( \bPi\sp {0}\sb {3} \)-complete} } \).
Thus \( \mathcal{C}\sb 1 \) is comeager, and \( \mathcal{C}\sb 2 \) is meager.
\item
The function \( \mu \colon \KK \to [ 0 ; 1 ] \) is upper semicontinuous, that is: if \( K\sb n \to K \) and \( \mu ( K\sb n ) \geq r \) then \( \mu ( K ) \geq r \).

To see this, associate to each \( T \in \PrTr\sb 2 \) and \( n \in \omega \) the number \( M ( T , n ) = \card{\Lev ( T , n ) } / 2\sp {n} \), where \( \Lev ( T , n ) \) is the set of nodes of \( T \) of length \( n \).
For each \( n \) the map \( T \mapsto M ( T , n ) \) is continuous, and \( \mu ( \body{T} ) = \inf\sb n M ( T , n ) \). 
\item
The complexity of the \( z \)-sections of \( \mathcal{B}\sp +\sb {\leq} \) does not depend on \( z \in \pre{\omega}{2} \), so in order to study their position in the Borel hierarchy, it is enough to focus on the section \( \mathcal{C} = \setof{ ( K , r )}{ ( K , 0\sp {( \omega )} , r ) \in \mathcal{B}\sp +\sb {\leq} } \).
(Apply an isometry of \( \pre{\omega}{2} \) sending \( z \) to \( 0\sp {( \omega )} \)).
Moreover the section \( \vsection{\mathcal{C}}{ K } \) is closed, being \( [ \upperdensity\sb K ( 0\sp {( \omega )} ) ; 1 ] \), while for every \( r \in \cointerval{0}{1} \) the section \( \hsection{\mathcal{C}}{ r } \) is \( \bPi\sp {0}\sb {2} \)-hard.

To see this use the map \( \KK \to \KK \), \( K \mapsto \set{ 0\sp {( \omega ) } } \cup \bigcup\sb {n \in \omega } 0\sp {( n )} \conc 1 \conc K \), which reduces the \( \bPi\sp {0}\sb {2} \)-complete set \( \setof{ K \in \KK }{ \mu ( K ) \leq r } \) to \( \setof{ K \in \KK }{ \density\sb K ( 0\sp {( \omega ) } ) \leq r } \).
\end{enumerate-(a)}
\end{remarks}

\begin{corollary}\label{cor:D^-}
The function \( \lowerdensity \colon \pre{\omega}{2} \times \KK \to [ 0 ; 1 ] \) is in \( \Baireclass\sb 2 \setminus \Baireclass\sb 1 \).
\end{corollary}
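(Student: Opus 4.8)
The plan is to prove the two halves of the statement separately: that $\lowerdensity$ lies in $\Baireclass_2$, and that it does not lie in $\Baireclass_1$.

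For the upper bound, recall that a map into $[0;1]$ is of Baire class $2$ precisely when the preimage of every open set belongs to $\bSigma^0_3$. Every open subset of $[0;1]$ is a countable union of intervals of the forms $(a,1]$, $[0,b)$, $(a,b)$. Now $\lowerdensity^{-1}((a,1]) = \{(z,K) : \lowerdensity_K(z) > a\}$ is the section at $r=a$ of $\mathcal{B}^-_{>}$, which is $\bSigma^0_2$ by Lemma~\ref{lem:measureofcompactsinCantor}\ref{lem:measureofcompactsinCantor-b}, and $\lowerdensity^{-1}([0,b)) = \{(z,K) : \lowerdensity_K(z) < b\}$ is the section at $r=b$ of $\mathcal{B}^-_{<}$, which is $\bSigma^0_3$ by the same lemma (here one uses only that fixing a coordinate of a $\bSigma^0_\xi$ set yields a $\bSigma^0_\xi$ section, modulo the trivial permutation of coordinates between $\mathcal{B}^-_\bowtie \subseteq \KK \times \pre{\omega}{2} \times [0;1]$ and the domain $\pre{\omega}{2} \times \KK$). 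Intersecting the first two gives $\lowerdensity^{-1}((a,b)) \in \bSigma^0_3$, and as $\bSigma^0_3$ is closed under countable unions, the preimage of an arbitrary open set is $\bSigma^0_3$. Hence $\lowerdensity \in \Baireclass_2$.

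For the lower bound I would argue by contradiction, using that the Baire class is preserved under precomposition with a continuous map. If $\lowerdensity$ were in $\Baireclass_1$, then for each fixed $K \in \KK$ the function $g_K \colon \pre{\omega}{2} \to [0;1]$, $g_K(z) = \lowerdensity_K(z)$, obtained by composing $\lowerdensity$ with the continuous embedding $z \mapsto (z,K)$, would again be of Baire class $1$; so $g_K^{-1}([0,1))$ would be $\bSigma^0_2$ and its complement $g_K^{-1}(\{1\}) = \{z : \lowerdensity_K(z) = 1\}$ would be $\bPi^0_2$. But for closed $K$ one has $\{z : \lowerdensity_K(z) = 1\} = \Phi(K)$: if $z \notin K$ the density is $0$, while $\lowerdensity_K(z) = 1$ forces $\upperdensity_K(z) = 1$ and hence $\density_K(z) = 1$. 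By (the proof of) Theorem~\ref{th:measureofcompactsinCantor}\ref{th:measureofcompactsinCantor-b} there is a $K$ of positive measure and empty interior for which $\Phi(K)$ is $\bPi^0_3$-complete, hence not $\bPi^0_2$. This contradiction shows $\lowerdensity \notin \Baireclass_1$.

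There is no serious obstacle here, since both directions reduce to results already established: Lemma~\ref{lem:measureofcompactsinCantor} supplies the upper bound and Theorem~\ref{th:measureofcompactsinCantor} supplies the witnessing $\bPi^0_3$-complete section. The only points needing care are the bookkeeping of the coordinate permutation when transferring complexity estimates from $\mathcal{B}^-_\bowtie$ to $\lowerdensity$, and the observation that passing to a single $K$-slice cannot raise the Baire class—which is exactly what lets a genuinely $\bPi^0_3$-complete section rule out membership in $\Baireclass_1$ for the function on the product.
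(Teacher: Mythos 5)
Your proof is correct and follows essentially the same route as the paper: the upper bound comes from the complexity of \( \mathcal{B}\sp{-}\sb{>} \) and \( \mathcal{B}\sp{-}\sb{<} \) in Lemma~\ref{lem:measureofcompactsinCantor}\ref{lem:measureofcompactsinCantor-b}, and the lower bound from the \( \bPi\sp{0}\sb{3} \)-completeness of \( \Phi ( K ) \) established in the proof of Theorem~\ref{th:measureofcompactsinCantor}\ref{th:measureofcompactsinCantor-b}. The only cosmetic difference is that you pass to a fixed \( K \)-slice to derive the contradiction, whereas the paper phrases the obstruction as the full preimage of \( \set{1} \) being \( \bPi\sp{0}\sb{3} \)-complete---but its hardness there is witnessed by exactly the same section, so the arguments coincide.
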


\begin{proof}
The preimage of \( ( a ; b ) \) via \( \lowerdensity \) is \( \setof{ ( z , K ) }{ ( K , z , b ) \in \mathcal{B}\sp {-}\sb {<} \wedge ( K , z , a ) \in \mathcal{B}\sp {-}\sb {>} } \) which is a set in \( \bSigma\sp {0}\sb {3} \), so the preimage of an open set is \( \bSigma\sp {0}\sb {3} \).
The preimage of \( \set{1} \) under \( \lowerdensity \) is \( \setof{ ( z , K ) }{ z \in \Phi ( K ) } \), which by the argument in the proof of Theorem~\ref{th:measureofcompactsinCantor}\ref{th:measureofcompactsinCantor-b} is \( \bPi\sp {0}\sb {3} \)-complete.
Therefore \( \lowerdensity \) is not in \( \Baireclass\sb 1 \).
\end{proof}

\begin{question}
Are the functions \( \upperdensity , \oscillation \colon \pre{\omega}{2} \times \KK \to [ 0 ; 1 ] \) in \( \Baireclass\sb 3 \setminus \Baireclass\sb 2 \)?
\end{question}

\subsection{Sets that are solid, dualistic, or spongy}
A set \( A \subseteq \pre{ \omega }{2} \) is said to be
\begin{itemize}
\item
\markdef{solid} iff \( \Blur ( A ) = \emptyset \),
\item
\markdef{quasi-dualistic} iff \( \Sharp (A ) = \emptyset \),
\item
\markdef{dualistic} iff it is quasi-dualistic and solid iff \( \Exc ( A ) = \emptyset \),
\item
\markdef{spongy} iff \( \Blur ( A ) \neq \emptyset = \Sharp (A ) \) iff it is quasi-dualistic but not solid.
\end{itemize}
The sets \( \Blur ( A ) \) and \( \Sharp ( A ) \) are \( \bSigma\sp {0}\sb {3} \) and \( \bPi\sp {0}\sb {3} \), respectively.
If \( A \) is dualistic, then \( \Phi ( A ) \) and \( \Phi ( A\sp \complement ) \) are \( \bDelta\sp {0}\sb {2} \) by~\cite[][Section 3.3]{Andretta:2013uq}. 
In~\cite[][Section 3.4]{Andretta:2013uq} examples of dualistic, solid, spongy sets are constructed.

The collections of sets that are solid, dualistic, quasi-dualistic, or spongy are denoted by \( \Solid \), \( \Dual \), \( \qDual \), and \( \Spongy \).
Also
\[
\bDelta\sp {0}\sb {1} \subseteq \Dual = \Solid \cap \qDual .
\]
One can further refine this taxonomy of measurable sets by imposing some restriction on the number of blurry/sharp points and on the number of values that the density function can attain.
For example, for \( \kappa \leq \omega \) and \( \bowtie \) one of \( {<} , {\leq} , {>} , {\geq} \) or \( = \)  one can consider 
\begin{align*}
\BLR \sb { \bowtie \kappa } & = \setof{ A \in \MEAS }{ \card{ \Blur ( A ) } \bowtie \kappa }
\\
\SHARP\sb { \bowtie \kappa } & = \setof{ A \in \MEAS }{ \card{ \Sharp ( A ) } \bowtie \kappa }
\\
\Range\sb { \bowtie \kappa } & = \setof{ A \in \MEAS }{ \card{ \ran \density\sb A \cap ( 0 ; 1 ) } \bowtie \kappa } .
\end{align*}
For the sake of readability the \( = \) sign will be dropped from the subscript and we write \( \BLR \sb { \kappa } , \SHARP\sb \kappa , \Range\sb \kappa \). 
Thus 
\begin{equation}\label{eq:dual}
\begin{aligned}
 \Solid &= \BLR\sb {0} , & \qDual & = \SHARP\sb {0} = \Range\sb {0} , 
 \\ 
 \Spongy & = \qDual \cap \BLR\sb {\geq 1} , & \Dual &= \BLR\sb {0} \cap \SHARP\sb {0} . 
\end{aligned}
\end{equation}
One can also consider the class of all measurable sets such that the density function is injective (on the sharp points), or attains values in a given set \( S \subseteq ( 0 ; 1 ) \), or attains either meager-many or null-many values:
\begin{align*}
\Range\sb {\mathrm{inj}} &= \setof{ A \in \MEAS }{ \FORALL{z\sb 1 , z\sb 2 \in \pre{\omega}{2} }( \density\sb A ( z\sb 1 ) = \density\sb A ( z\sb 2 ) \in ( 0 ; 1 ) \implies z\sb 1 = z\sb 2) }
 \\
\Range\sb {\MGR} & = \setof{ A \in \MEAS }{ \ran ( \density\sb { A } ) \in \MGR }
\\
\Range ( S ) & = \setof{ A \in \MEAS }{ \ran ( \density\sb A ) = S \cup \setLR{ 0 , 1 } } 
\\
\Range\sb { \lambda \bowtie a } & = \setof{ A \in \MEAS }{\lambda ( \ran ( \density\sb { A } ) ) \bowtie a } 
 \end{align*}
where \( a \in [ 0 ; 1] \), the symbol \( \bowtie \) denotes one of the relations \( \leq , < , \geq , > , = \), and \( \lambda \) is the Lebesgue measure on \( \R \).
It is easy to check that \( \ran ( \density\sb A ) \cap ( 0 ;1 ) \) is \( \bSigma\sp {1}\sb {1} \), so \( \Range ( S ) = \emptyset \) whenever \( S \) is not analytic.

All these families of sets are invariant under \( =\sb \mu \), so they can be defined on the measure algebra as well, that is to say: for \( \mathcal{C} \) one of the collections above, let \( \widehat{\mathcal{C}} = \setof{ \eq{A} \in \MALG }{ A \in \mathcal{C}} \).
By~\cite[]{Andretta:2015kq} \( \widehat{\Spongy} \) is comeager in \( \MALG \).

For \( S \subseteq ( 0 ; 1 ) \), let \( \widehat{\Range} ( { \subseteq } S ) = \setof{ \eq{ A } \in \MALG }{ \ran \density\sb A \subseteq S \cup \set{ 0 , 1} } \), and define \( \widehat{\Range} ( { \supseteq } S ) \) similarly.

\begin{lemma}\label{lem:rangecontainment}
Let \( S \subseteq ( 0 ; 1 ) \) be \( \bSigma\sp {1}\sb {1} \).
\begin{enumerate-(a)}
\item\label{lem:rangecontainment-a}
\( \widehat{\Range} ( { \subseteq } S ) \) and \( \widehat{\Range} ( { \supseteq } S ) \) are \( \bPi\sp {1}\sb {2} \).
\item\label{lem:rangecontainment-b}
If \( S \) is Borel then \( \widehat{\Range} ( { \subseteq } S ) \) is \( \bPi\sp {1}\sb {1} \).
\item\label{lem:rangecontainment-c}
If \( S \) is countable then \( \widehat{\Range} ( { \supseteq } S ) \) is \( \bSigma\sp {1}\sb {1} \).
\end{enumerate-(a)} 
\end{lemma}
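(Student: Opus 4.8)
The whole lemma reduces to the Borel set
\[
\widehat{\mathcal{B}} = \setofLR{ ( \eq{A} , z , r ) \in \MALG \times \pre{\omega}{2} \times [ 0 ; 1 ] }{ \density\sb A ( z ) = r } \in \bPi\sp 0\sb 3
\]
recorded above, together with its ``projection'' obtained by quantifying away the variable \( z \),
\[
P = \setofLR{ ( \eq{A} , r ) \in \MALG \times [ 0 ; 1 ] }{ r \in \ran \density\sb A } .
\]
Since \( \widehat{\mathcal{B}} \) is Borel and \( \pre{\omega}{2} \) is Polish, \( P \) is \( \bSigma\sp 1\sb 1 \), and \( \vsection{P}{ \eq{A} } = \ran \density\sb A \). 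The plan is to write the defining condition of each of the three sets as a first-order combination of the \( \bSigma\sp 1\sb 1 \) relation ``\( ( \eq{A} , r ) \in P \)'' and of ``\( r \in S \)'', and then read off the pointclass from the standard closure properties of \( \bSigma\sp 1\sb n \) and \( \bPi\sp 1\sb n \).

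For part~\ref{lem:rangecontainment-a} I would use
\[
\eq{A} \in \widehat{\Range} ( {\subseteq} S ) \iff \FORALL{ r \in ( 0 ; 1 ) } \bigl ( ( \eq{A} , r ) \in P \implies r \in S \bigr ) .
\]
The matrix is the union of the \( \bPi\sp 1\sb 1 \) condition \( ( \eq{A} , r ) \notin P \) and the \( \bSigma\sp 1\sb 1 \) condition \( r \in S \); as both \( \bPi\sp 1\sb 1 \) and \( \bSigma\sp 1\sb 1 \) are contained in \( \bDelta\sp 1\sb 2 \), which is closed under finite unions, the matrix is \( \bDelta\sp 1\sb 2 \), and the universal real quantifier \( \FORALL{r} \) keeps us in \( \bPi\sp 1\sb 2 \). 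For the containment \( {\supseteq} \) I would use
\[
\eq{A} \in \widehat{\Range} ( {\supseteq} S ) \iff ( \eq{A} , 0 ) \in P \AND ( \eq{A} , 1 ) \in P \AND \FORALL{r} \bigl ( r \in S \implies ( \eq{A} , r ) \in P \bigr ) ,
\]
the first two conjuncts being \( \bSigma\sp 1\sb 1 \) (in fact open, as they amount to \( 0 < \hat{\mu} ( \eq{A} ) < 1 \)) and the last conjunct being again \( \FORALL{r} \) of a \( \bDelta\sp 1\sb 2 \) matrix; the conjunction is \( \bPi\sp 1\sb 2 \).

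Parts~\ref{lem:rangecontainment-b} and~\ref{lem:rangecontainment-c} are the same two formulas with the hypotheses on \( S \) plugged in. If \( S \) is Borel then \( r \in S \) is \( \bDelta\sp 1\sb 1 \subseteq \bPi\sp 1\sb 1 \), so in the \( {\subseteq} \)-formula the matrix becomes a union of two \( \bPi\sp 1\sb 1 \) conditions, hence \( \bPi\sp 1\sb 1 \), and since \( \bPi\sp 1\sb 1 \) is closed under \( \FORALL{r} \) we land in \( \bPi\sp 1\sb 1 \). If \( S = \setofLR{ s\sb n }{ n \in \omega } \) is countable, the universal quantifier over \( S \) in the \( {\supseteq} \)-formula collapses to a countable conjunction
\[
\FORALL{r} \bigl ( r \in S \implies ( \eq{A} , r ) \in P \bigr ) \iff \bigwedge\sb n \, ( \eq{A} , s\sb n ) \in P ,
\]
each conjunct being \( \bSigma\sp 1\sb 1 \); as \( \bSigma\sp 1\sb 1 \) is closed under countable intersections this is \( \bSigma\sp 1\sb 1 \), and intersecting with the two \( \bSigma\sp 1\sb 1 \) conditions for \( 0 \) and \( 1 \) preserves \( \bSigma\sp 1\sb 1 \). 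There is no real difficulty beyond keeping the bookkeeping straight: the points to watch are that a union of a \( \bPi\sp 1\sb 1 \) set with a \( \bSigma\sp 1\sb 1 \) set lands in \( \bPi\sp 1\sb 2 \) (through \( \bDelta\sp 1\sb 2 \)), and, for part~\ref{lem:rangecontainment-c}, that it is precisely the closure of \( \bSigma\sp 1\sb 1 \) under countable intersection which lets countability of \( S \) collapse the a~priori \( \bPi\sp 1\sb 2 \) bound all the way down to \( \bSigma\sp 1\sb 1 \).
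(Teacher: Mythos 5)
Your proof is correct and takes essentially the same route as the paper's: both express each set by a first-order formula over the Borel graph of the density function and then count quantifiers using the standard closure properties (the paper merely phrases the \( \subseteq \)-case as a universal quantification over points \( z \in \pre{\omega}{2} \) with the matrix ``\( \oscillation\sb A ( z ) = 0 \implies \density\sb A ( z ) \in S \cup \set{0,1} \)'', rather than over values \( r \) as you do, and for \( \supseteq \) it uses exactly your formula minus the two conjuncts for \( 0 \) and \( 1 \), which are automatic once some \( r \in S \) is attained). Your explicit inclusion of the open conditions \( 0 , 1 \in \ran \density\sb A \) is a harmless refinement that also covers the case \( S = \emptyset \), which the paper dismisses as trivial.
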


\begin{proof}
Parts \ref{lem:rangecontainment-a} and \ref{lem:rangecontainment-b} follow by 
\begin{align*}
\eq{A} \in \widehat{\Range} ( { \subseteq } S ) & \IFF \FORALL{z \in \pre{\omega}{2} }( \oscillation\sb A ( z ) = 0 \implies \density\sb A ( z ) \in S \cup \set{ 0 , 1 } )
\\
\eq{A} \in \widehat{\Range} ( { \supseteq } S ) & \IFF \FORALL{r \in S } \EXISTS{ z \in \pre{\omega}{2} } ( \density\sb A ( z ) = r ) .
\end{align*}
For \ref{lem:rangecontainment-c} we may assume that \( S \neq \emptyset \) otherwise the result is trivial.
So let \( \seqofLR{ r\sb n }{ n \in \omega } \) be an enumeration (possibly with repetitions) of \( S \).
Then \( \eq{A} \in \widehat{\Range} ( { \supseteq } S ) \) iff \( \FORALL{n} \EXISTS{ z \in \pre{\omega}{2} } ( \density\sb A ( z ) = r\sb n ) \).
\end{proof}

\begin{theorem}\label{th:upperbound}
Let \( n < \omega \).
\begin{enumerate-(a)}
\item\label{th:upperbound-a}
The following collections of sets are \( \bPi\sp {1}\sb {1} \):
\begin{enumerate}[label={\upshape (a\arabic*)}, leftmargin=2pc]
\item\label{th:upperbound-a-1}
\( \widehat{\SHARP}\sb { n } \), \( \widehat{\SHARP}\sb { \leq n} \), \( \widehat{\SHARP}\sb { < \omega } \), \( \widehat{\SHARP}\sb { \leq \omega } \), \( \widehat{\SHARP}\sb { \omega } \),
\item\label{th:upperbound-a-2}
\( \widehat{\BLR}\sb {n } \), \( \widehat{\BLR}\sb {\leq n } \), \( \widehat{\BLR}\sb { < \omega} \), \( \widehat{\BLR}\sb {\leq \omega} \), \( \widehat{\BLR}\sb { \omega} \),
\item\label{th:upperbound-a-3}
 \( \widehat{\Range}\sb { \leq n } \), \( \widehat{\Range}\sb { < \omega } \), \( \widehat{\Range}\sb { \leq \omega } \), \( \widehat{\Range}\sb {\mathrm{inj}} \), \( \widehat{\Range}\sb {\MGR} \), \( \widehat{\Range}\sb { \lambda \leq a } \) and \( \widehat{\Range}\sb { \lambda < a } \) for any \( a \in [ 0 ; 1 ] \),
\item\label{th:upperbound-a-4}
\( \widehat{\Solid} \), \( \widehat{\qDual} \), and \( \widehat{\Dual} \).
\end{enumerate}
\item\label{th:upperbound-b}
The following collections of sets are \( 2 \)-\( \bSigma\sp {1}\sb {1} \): \( \widehat{\Range} \sb { n + 1 } \), \( \widehat{\Range} \sb { \omega } \), \( \widehat{\Range}\sb { \lambda = a } \), \( \widehat{\Range} ( S ) \) for \( a \in [ 0 ; 1 ] \) and \( S \subseteq ( 0 ; 1 ) \) countable, and \( \widehat{\Spongy} \).
\item\label{th:upperbound-c}
\( \widehat{\Range} ( S ) \) is \( \bPi\sp {1}\sb {2} \), for any \( S \subseteq ( 0 ; 1 ) \). 
\end{enumerate-(a)}
 
Therefore in the space \( \KK \) the homologous sets \( \SHARP\sb { n } \), \dots{} have the same complexity.
\end{theorem}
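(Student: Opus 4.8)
The plan is to realize each family as the set of points \( \eq{A} \in \MALG \) whose associated vertical section---of sharp points, of blurry points, or of attained intermediate values---satisfies a cardinality, category, or measure constraint, and then to feed the corresponding relation into the section theorems of Section~\ref{subsubsec:Sections}.

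For part~\ref{th:upperbound-a} I would first fix three relations over \( \MALG \): \( \widehat{\mathcal{S}} = \setof{ ( \eq{A} , z ) }{ z \in \Sharp ( A ) } \), \( \widehat{\mathcal{L}} = \setof{ ( \eq{A} , z ) }{ z \in \Blur ( A ) } \), and \( \widehat{\mathcal{R}} = \setof{ ( \eq{A} , r ) \in \MALG \times ( 0 ; 1 ) }{ \EXISTS{z \in \pre{\omega}{2}} \density\sb A ( z ) = r } \). The first two are Borel, being the parametrized forms of \( \Sharp ( A ) \in \bPi\sp{0}\sb{3} \) and \( \Blur ( A ) \in \bSigma\sp{0}\sb{3} \) (their complexity coming from \( \oscillation , \lowerdensity , \upperdensity \) being \( \Baireclass\sb 2 \) on \( \MALG \times \pre{\omega}{2} \)), whereas \( \widehat{\mathcal{R}} \) is only analytic, being the projection of the \( \bPi\sp{0}\sb{3} \) set \( \widehat{\mathcal{B}} \). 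Their sections at \( \eq{A} \) are respectively \( \Sharp ( A ) \), \( \Blur ( A ) \), and \( \ran \density\sb A \cap ( 0 ; 1 ) \), so \( \widehat{\SHARP}\sb{ \bowtie \kappa} , \widehat{\BLR}\sb{ \bowtie \kappa} , \widehat{\Range}\sb{ \bowtie \kappa} \) are exactly the ``\( \bowtie \kappa \)-many sections'' sets of \( \widehat{\mathcal{S}} , \widehat{\mathcal{L}} , \widehat{\mathcal{R}} \). Theorem~\ref{th:sections} then gives lists~\ref{th:upperbound-a-1} and~\ref{th:upperbound-a-2} in full (part~\ref{th:sections-b} applies since \( \widehat{\mathcal{S}} , \widehat{\mathcal{L}} \) are Borel, covering even the exact counts \( \widehat{\SHARP}\sb n , \widehat{\SHARP}\sb \omega , \widehat{\BLR}\sb n , \widehat{\BLR}\sb \omega \)) and the entries \( \widehat{\Range}\sb{\leq n} , \widehat{\Range}\sb{<\omega} , \widehat{\Range}\sb{\leq\omega} \) of~\ref{th:upperbound-a-3} (for the merely analytic \( \widehat{\mathcal{R}} \) only part~\ref{th:sections-a} is available). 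The remaining entries of~\ref{th:upperbound-a-3} are separate: \( \widehat{\Range}\sb{\mathrm{inj}} \) is \( \bPi\sp{1}\sb{1} \) by directly negating its defining \( \EXISTS{z\sb 1 , z\sb 2} \) formula over a Borel relation, while \( \widehat{\Range}\sb{\MGR} \) and \( \widehat{\Range}\sb{ \lambda \leq a} , \widehat{\Range}\sb{ \lambda < a} \) are the complements of the ``non-meager section'' and ``\( > a \) / \( \geq a \) section'' sets, which are analytic by the Novikov and Kondô--Tugué halves of Theorem~\ref{th:sections2} applied to \( \widehat{\mathcal{R}} \) (here \( \set{0,1} \) is negligible for both category and Lebesgue measure, so it may be ignored). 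Finally~\ref{th:upperbound-a-4} is immediate from~\eqref{eq:dual}: \( \widehat{\Solid} = \widehat{\BLR}\sb 0 \), \( \widehat{\qDual} = \widehat{\SHARP}\sb 0 \), and \( \widehat{\Dual} = \widehat{\Solid} \cap \widehat{\qDual} \).

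For part~\ref{th:upperbound-b} the uniform device is to exhibit each set as an intersection of an analytic with a coanalytic set, placing it in \( 2 \)-\( \bSigma\sp{1}\sb{1} \). Using the coanalytic bounds just obtained I would write \( \widehat{\Range}\sb{n+1} = \widehat{\Range}\sb{\leq n+1} \cap ( \widehat{\Range}\sb{\leq n} )\sp \complement \), \( \widehat{\Range}\sb \omega = \widehat{\Range}\sb{\leq\omega} \cap ( \widehat{\Range}\sb{<\omega} )\sp \complement \), and \( \widehat{\Range}\sb{\lambda = a} = \widehat{\Range}\sb{\lambda \leq a} \cap ( \widehat{\Range}\sb{\lambda < a} )\sp \complement \); for countable \( S \) I would use Lemma~\ref{lem:rangecontainment} to write \( \widehat{\Range} ( S ) = \widehat{\Range} ( {\subseteq} S ) \cap \widehat{\Range} ( {\supseteq} S ) \) as \( \bPi\sp{1}\sb{1} \cap \bSigma\sp{1}\sb{1} \) (parts~\ref{lem:rangecontainment-b} and~\ref{lem:rangecontainment-c}); and \( \widehat{\Spongy} = \widehat{\qDual} \cap ( \widehat{\Solid} )\sp \complement \) by~\eqref{eq:dual}. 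Part~\ref{th:upperbound-c} is then Lemma~\ref{lem:rangecontainment}\ref{lem:rangecontainment-a}: for analytic \( S \) both \( \widehat{\Range} ( {\subseteq} S ) \) and \( \widehat{\Range} ( {\supseteq} S ) \) are \( \bPi\sp{1}\sb{2} \) and \( \widehat{\Range} ( S ) \) is their intersection, while for non-analytic \( S \) the set is empty. The closing sentence transfers everything to \( \KK \): the map \( j \colon \KK \to \MALG \) of~\eqref{eq:embeddingKintoMALG} is Borel, each pointclass in play (\( \bPi\sp{1}\sb{1} \), \( 2 \)-\( \bSigma\sp{1}\sb{1} \), \( \bPi\sp{1}\sb{2} \)) is closed under Borel preimages, and \( \mathcal{C} \cap \KK = j\sp{-1} ( \widehat{\mathcal{C}} ) \).

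The main obstacle, and the reason the \( \Range \)-cardinality families split across parts~\ref{th:upperbound-a} and~\ref{th:upperbound-b}, is that \( \widehat{\mathcal{R}} \) is genuinely analytic rather than Borel: its definition carries an irreducible \( \EXISTS{z} \), so the Lusin half of Theorem~\ref{th:sections} does not apply, and by the remark following that theorem the exact-count sections \( P\sb{=n} , P\sb{=\omega} \) of an analytic relation need not even be Borel. One must therefore resist computing \( \widehat{\Range}\sb{n+1} \) and \( \widehat{\Range}\sb \omega \) by direct section counting and instead extract them as set-differences of the ``\( \leq \)'' families from part~\ref{th:upperbound-a}; the only point needing care is to confirm that these differences really compute the intended exact cardinalities (e.g.\ that \( \card{ \ran \density\sb A \cap ( 0 ; 1 ) } = \omega \) holds precisely when the set is \( \leq \omega \) but not finite).
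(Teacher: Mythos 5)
Your proposal is correct and follows essentially the same route as the paper: the same three parametrized relations on \( \MALG \times \pre{\omega}{2} \) (resp.\ \( \MALG \times ( 0 ; 1 ) \)) fed into Theorems~\ref{th:sections} and~\ref{th:sections2}, part~\ref{th:upperbound-a-4} via~\eqref{eq:dual}, part~\ref{th:upperbound-b} as differences/intersections of the part~\ref{th:upperbound-a} families together with Lemma~\ref{lem:rangecontainment}, and the transfer to \( \KK \) via the Borel map \( j \) of~\eqref{eq:embeddingKintoMALG}. Your write-up is in fact somewhat more explicit than the paper's (notably the exact-count decompositions and the closure of the pointclasses under Borel preimages), but the underlying argument is identical.
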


\begin{proof}
 \ref{th:upperbound-a}:
Let \( G \) be one of \( \lowerdensity , \upperdensity , \oscillation \).
 By~\cite{Andretta:2015kq} the function \( \MALG \times \pre{\omega}{2} \to [ 0 ; 1 ] \), \( ( \eq{A} , x ) \mapsto G\sb A ( x ) \) is Borel (in fact in \( \Baireclass\sb 2 \)).
Therefore
\[
 \mathcal{B} = \setofLR{ ( \eq{A} , x ) \in \MALG \times \pre{\omega}{2} }{ \density\sb A ( x ) \in ( 0 ; 1 ) }
\]
 is Borel, so Theorem~\ref{th:sections} yields that \( \widehat{\SHARP}\sb { \bowtie \kappa } = \setofLR{ \eq{A} }{ \card{ \vsection{\mathcal{B}}{ \eq{A}}} \bowtie \kappa } \) is \( \bPi\sp {1}\sb {1} \), where \( \kappa \leq \omega \) and \( \bowtie \) is one of \( = , \leq , < \).
 Thus \ref{th:upperbound-a-1} holds.
 
 For \ref{th:upperbound-a-2} apply the same argument with \( \mathcal{B} = \setof{ ( \eq{A} , x ) \in \MALG \times \pre{\omega}{2} }{ \oscillation\sb A ( x ) > 0 } \).

Now for~\ref{th:upperbound-a-3}: the set 
\[
 \mathcal{A} = \setofLR{ ( \eq{A} , r ) \in \MALG \times ( 0 ; 1 ) }{ \EXISTS{x \in \pre{\omega}{2} } ( \density\sb A ( x ) = r ) }
\]
is \( \bSigma\sp {1}\sb {1} \), so Theorem~\ref{th:sections} implies that \( \widehat{\Range}\sb { \leq n } \), \( \widehat{\Range}\sb { < \omega } \), \( \widehat{\Range}\sb { \leq \omega } \) are \( \bPi\sp {1}\sb {1} \).
By inspection \( \widehat{\Range}\sb {\mathrm{inj}} \) is \( \bPi\sp {1}\sb {1} \), while the result for \( \widehat{\Range}\sb { \lambda \leq a } \), \( \widehat{\Range}\sb { \lambda < a } \), and \( \widehat{\Range}\sb {\MGR} \) follows from Theorem~\ref{th:sections2}.

Part~\ref{th:upperbound-a-4} follows by~\eqref{eq:dual}.

\smallskip

For parts \ref{th:upperbound-b} and \ref{th:upperbound-c} argue as follows.
The complexity of \( \widehat{\Range} \sb { n + 1 } \), \( \widehat{\Range} \sb { \omega } \), and \( \widehat{\Range}\sb { \lambda = a } \) follows from~\ref{th:upperbound-a-3}, while the complexity of \( \widehat{\Spongy} \) is established by inspection.
The remaining cases follow from Lemma~\ref{lem:rangecontainment}.
\end{proof}

\begin{proposition}\label{prop:dualisticofallmeasures}
For every \( 0 < r < 1 \) there is a dualistic open set \( U \) such that \( \mu ( U ) = \mu ( \Cl U ) = r \), so that \( \Cl U \) is also dualistic.
\end{proposition}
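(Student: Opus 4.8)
The plan is to build \( U \) as a disjoint union of clopen patches, one inside each clopen annulus around a single distinguished point, and to tune the measures of the patches so that the density at that point converges into \( \set{ 0 , 1 } \) while the total measure is exactly \( r \). Concretely, fix \( z\sb * = 0\sp {( \omega )} \) and set \( A\sb k = \Nbhd\sb { 0\sp {( k )} \conc 1 } \), so that \( \seqof{ A\sb k }{ k \in \omega } \) partitions \( \pre{\omega}{2} \setminus \set{ z\sb * } \) into clopen sets with \( \mu ( A\sb k ) = 2\sp {-( k + 1 )} \). Inside each \( A\sb k \) I would choose a clopen \( W\sb k \subseteq A\sb k \) with \( \mu ( W\sb k ) = a\sb k \cdot 2\sp {-( k + 1 )} \) for a dyadic \( a\sb k \in [ 0 ; 1 ] \) to be fixed below (such \( W\sb k \) exists, since clopen subsets of \( A\sb k \) realize every dyadic measure in \( [ 0 ; 2\sp {-( k + 1 )} ] \)), and put \( U = \bigcup\sb k W\sb k \). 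Then \( U \) is open and \( \mu ( U ) = \sum\sb k a\sb k 2\sp {-( k + 1 )} \).

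Next I would compute the density everywhere. For \( z \neq z\sb * \) there is a unique \( n \) with \( z \in A\sb n \); for all large \( m \) we have \( \Nbhd\sb { z \restriction m } \subseteq A\sb n \) and \( U \cap A\sb n = W\sb n \), so \( \density\sb U ( z ) = \density\sb { W\sb n } ( z ) = \cchar{ W\sb n } ( z ) \in \set{ 0 , 1 } \) because \( W\sb n \) is clopen. At \( z\sb * \), writing \( r\sb k = \mu ( \LOC{U}{ 0\sp {( k )} } ) = 2\sp k \sum\sb { i \geq k } a\sb i 2\sp {-( i + 1 )} \), one gets \( r\sb 0 = \mu ( U ) \), the recursion \( r\sb { k + 1 } = 2 r\sb k - a\sb k \), and \( \density\sb U ( z\sb * ) = \lim\sb k r\sb k \). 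Thus \( U \) is dualistic as soon as \( \lim\sb k r\sb k \) exists and equals \( 0 \) or \( 1 \).

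The heart of the matter is choosing the dyadic \( a\sb k \) so that simultaneously \( r\sb 0 = r \) and \( r\sb k \to 0 \) or \( 1 \). The requirement \( r\sb { k + 1 } \in [ 0 ; 1 ] \) forces \( a\sb k \in [ \max ( 0 , 2 r\sb k - 1 ) ; \min ( 1 , 2 r\sb k ) ] \). If \( r \leq 1/2 \) I would keep \( r\sb k \leq 1/2 \) by picking at each step a dyadic \( a\sb k \leq 2 r\sb k \) with \( 2 r\sb k - a\sb k < 2\sp {-k} \); this yields \( r\sb { k + 1 } < 2\sp {-k} \), hence \( r\sb k \to 0 \) and therefore \( a\sb k = 2 r\sb k - r\sb { k + 1 } \to 0 \). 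If \( r \geq 1/2 \) the symmetric choice driving \( 1 - r\sb k \to 0 \) gives \( r\sb k \to 1 \). Either way \( U \) is dualistic with \( \mu ( U ) = r \). Finally, since each \( A\sb n \) is clopen and \( U \cap A\sb n = W\sb n \) is clopen, no point of \( A\sb n \) lies in \( \Cl U \setminus U \); hence \( \Cl U \setminus U \subseteq \set{ z\sb * } \), so \( \mu ( \Cl U ) = \mu ( U ) = r \). Then \( U =\sb \mu \Cl U \), and as the density depends only on the \( =\sb \mu \)-class, \( \Cl U \) is dualistic as well.

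The main obstacle is precisely the behaviour at the single accumulation point \( z\sb * \): the recursion \( r\sb { k + 1 } = 2 r\sb k - a\sb k \) shows that forcing \( \density\sb U ( z\sb * ) \in \set{ 0 , 1 } \) amounts to driving \( r\sb k \) to \( 0 \) or \( 1 \), which is incompatible with keeping \( r\sb 0 = r \) unless one targets \( 0 \) for small \( r \) and \( 1 \) for large \( r \). Once this dichotomy is in place, the dyadic bookkeeping is routine.
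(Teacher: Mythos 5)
Your construction is correct in substance and takes a genuinely different route from the paper's; one step of the greedy recursion needs a small repair, noted below.

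The paper also isolates the single accumulation point \( 0^{(\omega)} \), but it hangs its clopen patches below the nodes \( t_n = 0^{(n)} \conc 1^{(n)} \), whose relative measure inside \( \Nbhd_{0^{(n)}} \) decays like \( 2^{-n} \); consequently the density at \( 0^{(\omega)} \) is \emph{automatically} \( 0 \), no matter what is placed in the patches, and the whole problem reduces to realizing the measure \( r \). The paper does that by a base-\( 4 \) expansion argument for \( r \leq 1/3 \), and for \( 1/3 < r < 1 \) by adding a clopen block of dyadic measure carved out of the complementary region (Lemma~\ref{lem:dyadicvalueofdualistic}). You instead use the full annuli \( A_k = \Nbhd_{0^{(k)} \conc 1} \), of constant relative measure \( 1/2 \), so the density at \( 0^{(\omega)} \) is no longer free: it equals \( \lim_k r_k \) for the recursion \( r_{k+1} = 2 r_k - a_k \), and must be actively steered into \( \set{ 0 , 1 } \) by the choice of the dyadic fill ratios \( a_k \). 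What your approach buys: the total measure comes out equal to \( r \) exactly and automatically (by the telescoping identity \( r_0 = \sum_{i < K} a_i 2^{-(i+1)} + 2^{-K} r_K \) with \( r_K \in [ 0 ; 1 ] \)), with no case split at \( 1/3 \), no base-\( 4 \) expansion, and no auxiliary lemma. What the paper's approach buys: the density at the distinguished point is a constant (namely \( 0 \)) independent of \( r \), and the filling sets come from a fixed five-element palette rather than being chosen adaptively. (Incidentally, your closing remark that one \emph{must} target \( 0 \) for small \( r \) and \( 1 \) for large \( r \) is not quite forced: for any \( r < 1 \) one can target \( 0 \) after finitely many forced steps with \( a_k = 1 \); the dichotomy at \( 1/2 \) is a convenience, not a necessity.)

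The step that would fail as written: you claim that choosing a dyadic \( a_k \leq 2 r_k \) with \( 2 r_k - a_k < 2^{-k} \) ``keeps \( r_k \leq 1/2 \)''. It does not at the first step: for \( k = 0 \) this only gives \( r_1 < 1 \), and a careless admissible choice can produce \( r_1 > 3/4 \) (e.g.\ \( r = 1/2 \), \( a_0 = 1/8 \), so \( r_1 = 7/8 \)); then at \( k = 1 \) the requirements \( a_1 \leq 1 \) and \( 2 r_1 - a_1 < 2^{-1} \) are incompatible, since they force \( a_1 > 2 r_1 - 1/2 = 5/4 \). The invariant \( r_k \leq 1/2 \) is precisely what makes the choice interval nonempty (it guarantees \( 2 r_k \leq 1 \)), so it must be enforced rather than inferred. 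The fix is one line: demand \( 2 r_k - a_k < 2^{-k-1} \), i.e.\ pick the dyadic \( a_k \) in \( \bigl ( \max ( 0 , 2 r_k - 2^{-k-1} ) ; 2 r_k \bigr ] \) when \( r_k > 0 \) (this interval has positive length \( \min ( 2 r_k , 2^{-k-1} ) \) and lies in \( ( 0 ; 1 ] \), so it contains an admissible fill ratio), and take \( a_k = 0 \) when \( r_k = 0 \). Then \( r_{k+1} < 2^{-k-1} \leq 1/2 \) for every \( k \), the recursion never leaves \( [ 0 ; 1/2 ] \), and \( r_k \to 0 \); the symmetric modification handles \( r \geq 1/2 \). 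With this adjustment, and with your (correct) verification that all points \( z \neq 0^{(\omega)} \) get density \( \cchar{W_n} ( z ) \in \set{ 0 , 1 } \) and that \( \Cl U \setminus U \subseteq \set{ 0^{(\omega)} } \), the proof is complete.
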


We need a preliminary result.

\begin{lemma}\label{lem:dyadicvalueofdualistic}
Let \( \emptyset \neq U \subseteq \pre{ \omega }{2} \) be open, and let \( d \) be dyadic, with \( 0 < d < \mu ( U ) \).
Then there exists a clopen set \( V \subseteq U \) such that \( \mu ( V ) = d \).
\end{lemma}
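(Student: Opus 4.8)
The plan is to realize $V$ as a finite union of basic clopen neighborhoods contained in $U$, with the very last one \emph{trimmed} so that the total measure equals $d$ on the nose. First I would exploit the clopen-basis structure of the Cantor space: since the basic neighborhoods form a basis of clopen sets, $U$ decomposes as a disjoint union $U = \bigsqcup_{n \in \omega} \Nbhd_{s_n}$, where $\{ s_n \}$ is the antichain of $\subseteq$-minimal $s \in \pre{<\omega}{2}$ with $\Nbhd_s \subseteq U$. Put $a_n = \mu(\Nbhd_{s_n}) = 2^{-\lh(s_n)}$ and $S_N = \sum_{n < N} a_n$. The partial sums $S_N$ increase to $\mu(U) > d$, so they eventually exceed $d$; let $N$ be least with $S_N \geq d$, noting $N \geq 1$ because $d > 0 = S_0$. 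If $S_N = d$ we are already done with $V = \bigcup_{n < N} \Nbhd_{s_n}$; otherwise $S_{N-1} < d < S_N = S_{N-1} + a_{N-1}$, so the \emph{residual} dyadic value $e = d - S_{N-1}$ satisfies $0 < e < a_{N-1} = \mu(\Nbhd_{s_{N-1}})$.

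The one substantive point is that this residue $e$ can be realized inside a single basic neighborhood. This is the crux, but it is elementary: a neighborhood $\Nbhd_s$ of measure $2^{-\ell}$ partitions, for each $i$, into the $2^i$ clopen sets $\Nbhd_t$ with $t \supseteq s$ and $\lh(t) = \ell + i$, each of measure $2^{-\ell - i}$. Hence every dyadic rational in $[0 ; 2^{-\ell}]$, being of the form $j \cdot 2^{-\ell - i}$ with $0 \leq j \leq 2^i$, is the measure of a clopen subset of $\Nbhd_s$, namely a union of $j$ of these pieces. Applying this to $\Nbhd_{s_{N-1}}$ and the value $e$ yields a clopen set $W \subseteq \Nbhd_{s_{N-1}}$ with $\mu(W) = e$.

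Finally I would set $V = \bigl( \bigcup_{n < N-1} \Nbhd_{s_n} \bigr) \cup W$. This is a finite union of clopen sets, hence clopen; it is contained in $U$; and since $W \subseteq \Nbhd_{s_{N-1}}$ is disjoint from the remaining neighborhoods, the measures add to give $\mu(V) = S_{N-1} + e = d$, as required. The only obstacle worth flagging is the exactness demand --- attaining measure \emph{exactly} $d$ rather than merely approximately --- which is precisely what the trimming step (the construction of $W$) handles; the rest is just the standard disjoint clopen decomposition of an open subset of $\pre{\omega}{2}$ together with the fact that clopen measures are exactly the dyadic rationals.
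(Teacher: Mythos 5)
Your proof is correct; it rests on the same two elementary facts as the paper's (the disjoint decomposition of an open subset of \( \pre{\omega}{2} \) into basic clopen neighborhoods, and the dyadic arithmetic of clopen measures), but the mechanics of achieving measure \emph{exactly} \( d \) are organized differently, and the comparison is mildly instructive. The paper writes \( d = k/2^m \) at the outset and decomposes \( U \) into disjoint basic neighborhoods all of length at least \( m \); it then accumulates enough of these to have total measure at least \( d \), refines them all to one common length \( N \), and observes that since each refined piece has measure \( 2^{-N} \) and \( d = k2^{N-m}\cdot 2^{-N} \), one may simply select exactly \( k2^{N-m} \) of them. Exactness there is achieved globally, by uniform refinement plus counting. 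You instead take the canonical maximal antichain \( \{s_n\} \), accumulate whole neighborhoods greedily until the next one would overshoot, and then trim the last neighborhood to realize the dyadic residue \( e = d - S_{N-1} \); your trimming step (splitting \( \Nbhd_{s_{N-1}} \) into \( 2^i \) equal pieces and keeping \( j \) of them) is precisely the paper's refine-and-count argument localized to a single neighborhood. So your proof isolates the exactness issue into one residue, at the small cost of having to note that \( e \) is again dyadic (it is, being a difference of dyadic rationals), whereas the paper avoids any residue by forcing all lengths to be at least \( m \) from the start. One presentational point: you write \( U = \bigsqcup_{n\in\omega}\Nbhd_{s_n} \), but the maximal antichain may be finite (e.g.\ when \( U \) is clopen); the argument survives verbatim, since the partial sums still reach \( \mu(U) > d \), but the enumeration should not be assumed infinite.
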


\begin{proof}
Let \( d = k / 2\sp m \) and let \( U = \bigcup\sb { n \geq m} \bigcup\sb {s \in I\sb n} \Nbhd\sb s \) be a disjoint union, where each \( I\sb n \) is a (possibly empty) set of binary sequences of length \( n \).
So \( \mu ( U ) = \sum\sb { n = m }\sp {\infty } \card{ I\sb n } \cdot 2\sp {- n} \).
Let \( N \) be such that \( d \leq \sum\sb {n = m}\sp N \card{I\sb n } \cdot 2\sp {- n} \), and set \( J = \setof{ t \in \pre{ N}{2} }{ \exists s \in \bigcup\sb { n = m }\sp N I\sb n ( s\subseteq t )} \).
Then \( U = \bigcup\sb { t \in J } \Nbhd\sb t \cup \bigcup\sb { n > N } \bigcup\sb {s \in I\sb n} \Nbhd\sb s \).
Since \( d \leq \mu ( \bigcup\sb {t\in J} \Nbhd\sb t ) \), and \( \mu ( \Nbhd\sb t ) = 2\sp { - N } \) for every \( t \in J \), if \( J' \) is a subset of \( J \) of cardinality \( 2\sp {N-m}k \) , then \( V = \bigcup\sb { t \in J' } \Nbhd\sb t \) is as required.
\end{proof}

\begin{proof}[Proof of Proposition~\ref{prop:dualisticofallmeasures}]
Consider the following disjoint sets of nodes (see Figure~\ref{fig:whiteblack})
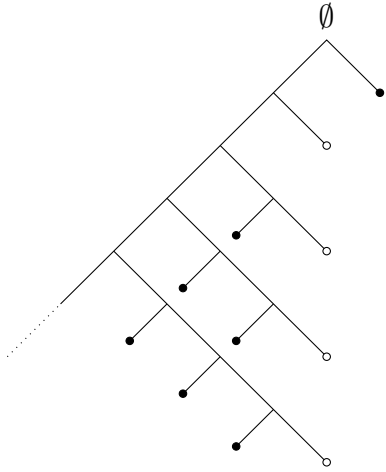
\begin{figure}
\begin{tikzpicture}[scale=0.7]
\draw (1,-5) -- (6,0)--(7,-1);
\draw[dotted] (0,-6)--(1,-5);
\draw (5,-1)--(6,-2);
\draw (4,-2)--(6,-4);
\draw (3,-3)--(6,-6);
\draw (2,-4)--(6,-8);
\draw (5,-3)--(4.3,-3.7);
\draw (4,-4)--(3.3,-4.7);
\draw (5,-5)--(4.3,-5.7);
\draw (3,-5)--(2.3,-5.7);
\draw (4,-6)--(3.3,-6.7);
\draw (5,-7)--(4.3,-7.7);
\filldraw (7, -1) circle (2pt);
\filldraw (4.3,-3.7) circle (2pt);
\filldraw (3.3,-4.7) circle (2pt);
\filldraw (4.3,-5.7) circle (2pt);
\filldraw (2.3,-5.7) circle (2pt);
\filldraw (3.3,-6.7) circle (2pt);
\filldraw (4.3,-7.7) circle (2pt);
\filldraw[fill=white] (6,-2) circle (2pt);
\filldraw[fill=white] (6,-4) circle (2pt);
\filldraw[fill=white] (6,-6) circle (2pt);
\filldraw[fill=white] (6,-8) circle (2pt);
\node at (6,0) [above] {\( \emptyset \)};
\end{tikzpicture}
 \caption{The nodes \( \circ \) are in \( A\sb 1 \), the nodes \( \bullet \) are in \( A\sb 2 \).}
 \label{fig:whiteblack}
\end{figure}
\begin{align*}
A\sb 1 & = \setofLR{ 0\sp {( n )} \conc 1\sp {( n )} }{ n > 0 }
\\
A\sb 2 & = \setofLR{ 0\sp {( n )} \conc 1\sp {( m )} \conc \seq{ 0 }}{ n > m > 0 } \cup \setLR{\seq{1} } .
\end{align*}
Then \( A\sb 1 \cup A\sb 2 \) is a maximal antichain, and \( \pre{\omega}{2} = U\sb 1 \cup U\sb 2 \cup \set{ 0\sp {( \omega ) } } \) is a partition, where \( U\sb i = \bigcup\sb { t \in A\sb i } \Nbhd\sb t \) (\( i = 1 , 2 \)) are open sets, and \( \mu ( U\sb 1 ) = 1 / 3 \) and \( \mu ( U\sb 2 ) = 2 / 3 \).
Then \( U\sb 1 = \bigcup\sb { n \geq 1} t\sb n \conc \pre{\omega}{2} \), where \( t\sb n = 0\sp {( n )} \conc 1\sp {( n )} \).
If we replace \( \pre{\omega}{2} \) with a smaller clopen set we obtain an open subset of \( U\sb 1 \).
To be more specific, fix clopen sets 
\[
D\sb 0 , \quad D\sb {1 / 4} , \quad D\sb {1 / 2} , \quad D\sb { 3 / 4} , \quad D\sb 1 
\]
of measure \( 0 \), \( 1/4 \), \( 1 / 2 \), \( 3 / 4 \), and \( 1 \) respectively.
(Thus \( D\sb 0 = \emptyset \), \( D\sb 1 = \pre{\omega}{2} \), while \( D\sb {1 / 4} \), \( D\sb { 1 / 2} \), \( D\sb { 3 / 4 } \) can be taken to be, for example, \( \Nbhd\sb {1\sp {( 2 )}} \), \( \Nbhd\sb {0} \) and \( \Nbhd\sb { 0 } \cup \Nbhd\sb {1\sp {( 2 )}} \).)
For each \( f \colon \omega \setminus \set{0} \to \set{ 0 , 1/4 , 1/2 , 3/4 , 1 } \) the set 
\[
W\sb f = \bigcup\sb { n \geq 1} t\sb n \conc D\sb {f ( n ) } \subseteq U\sb 1
\]
is open and since \( \Fr ( W\sb f ) \subseteq \set{ 0\sp {( \omega )} }\) and \( \density\sb {W\sb f } ( 0\sp { ( \omega ) } ) = 0 \), then \( W\sb f \) is dualistic.

\begin{claim}
For every \( r \leq 1 / 3 \) there is an \( f \) such that \( \mu ( W\sb f ) = r \).
\end{claim}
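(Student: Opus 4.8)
The plan is to turn the Claim into an elementary statement about series. First I would record that the sets \( t\sb n \conc D\sb { f ( n ) } \subseteq \Nbhd\sb { t\sb n } \) are pairwise disjoint (the \( t\sb n = 0\sp {( n )} \conc 1\sp {( n )} \) form the antichain \( A\sb 1 \)), and that \( \mu ( t\sb n \conc D\sb { f ( n ) } ) = 2\sp { - \lh ( t\sb n ) } \mu ( D\sb { f ( n ) } ) = 4\sp { - n } f ( n ) \), since \( \lh ( t\sb n ) = 2 n \) and \( \mu ( D\sb { f ( n ) } ) = f ( n ) \). Hence \( \mu ( W\sb f ) = \sum\sb { n \geq 1 } 4\sp { - n } f ( n ) \), and the Claim reduces to: every \( r \in [ 0 ; 1 / 3 ] \) is of the form \( \sum\sb { n \geq 1 } 4\sp { - n } f ( n ) \) for some \( f \colon \omega \setminus \set{ 0 } \to \set{ 0 , 1 / 4 , 1 / 2 , 3 / 4 , 1 } \).

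I would construct the values \( f ( n ) \) by recursion, tracking the residual \( r\sb k = r - \sum\sb { 1 \leq n < k } 4\sp { - n } f ( n ) \) and maintaining the invariant \( 0 \leq r\sb k \leq S\sb k \), where \( S\sb k = \sum\sb { n \geq k } 4\sp { - n } = \tfrac13 \cdot 4\sp { - ( k - 1 ) } \) is the largest amount still attainable from the remaining coordinates. The invariant holds at \( k = 1 \) since \( r\sb 1 = r \in [ 0 ; 1 / 3 ] = [ 0 ; S\sb 1 ] \). At each step I would choose \( f ( k ) \) so that \( r\sb { k + 1 } = r\sb k - 4\sp { - k } f ( k ) \in [ 0 ; S\sb { k + 1 } ] \); then \( r\sb k \leq S\sb k \to 0 \) forces \( \sum\sb { n \geq 1 } 4\sp { - n } f ( n ) = r - \lim\sb k r\sb k = r \), as desired.

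The crux is this single step. The attainable decrements \( 4\sp { - k } f ( k ) \) are exactly the five multiples \( j \cdot \delta \), \( j \in \set{ 0 , 1 , 2 , 3 , 4 } \), of \( \delta = 4\sp { - ( k + 1 ) } \) lying in \( [ 0 ; 4\sp { - k } ] \), and I must find one falling in \( [ r\sb k - S\sb { k + 1 } ; r\sb k ] \). Rescaling by \( \delta \) and setting \( x = r\sb k / \delta \), the invariant gives \( x \in [ 0 ; 16 / 3 ] \) (because \( S\sb k = \tfrac{16}{3} \delta \)) while \( S\sb { k + 1 } = \tfrac43 \delta \), so it suffices to locate an integer \( j \in \set{ 0 , 1 , 2 , 3 , 4 } \) with \( x - \tfrac43 \leq j \leq x \). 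That target interval has length \( \tfrac43 > 1 \), so it always contains an integer, and a short case split confirms it can be chosen in \( \set{ 0 , \dots , 4 } \): take \( j = \lfloor x \rfloor \) when \( x \leq 4 \) (then \( x - \lfloor x \rfloor < 1 \leq \tfrac43 \) and \( \lfloor x \rfloor \leq 4 \)), and \( j = 4 \) when \( 4 < x \leq 16 / 3 \) (then \( 4 \leq x \) and \( 4 \geq x - \tfrac43 \)). Setting \( f ( k ) = j / 4 \) completes the step.

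The main obstacle is nothing deeper than bookkeeping: one must pick the invariant bound \( S\sb k \) so that the five available decrements always straddle the target interval of length \( S\sb { k + 1 } = \tfrac43 \delta > \delta \), and then verify the two boundary cases where \( x \) is near \( 4 \). No measure-theoretic input beyond the additivity used at the outset is needed; everything reduces to a redundant base-\( 4 \) expansion whose extra digit \( 4 \) supplies the overlap that makes every value in \( [ 0 ; 1 / 3 ] \) representable.
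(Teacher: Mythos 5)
Your proposal is correct, and it proves the Claim by a genuinely different mechanism than the paper. Both arguments start from the same (easy) reduction \( \mu ( W_f ) = \sum_{n \geq 1} 4^{-n} f ( n ) \), but the paper then gives a \emph{closed-form, non-recursive} definition of \( f \): it treats \( r = 1/3 \) separately, and for \( r < 1/3 \) takes the standard \( 4 \)-ary expansion \( r = \sum_{n \geq 1} u_n / 4^n \) with digits \( u_n \in \set{ 0 , 1 , 2 , 3 } \), observes that such an expansion must begin with a block of \( 1 \)s followed by a \( 0 \) (say \( u_h = 0 \), \( u_n = 1 \) for \( n < h \)), and sets \( f ( n ) = 1 \) for \( n < h \) and \( f ( n ) = u_{n+1} / 4 \) for \( n \geq h \); the prefix of \( 1 \)s exactly compensates for the one-place digit shift forced by the factor \( 1/4 \) in the available values of \( f \). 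You instead run a \emph{greedy recursion} with the invariant \( 0 \leq r_k \leq S_k \), which amounts to constructing a redundant base-\( 4 \) expansion with digit set \( \set{ 0 , 1 , 2 , 3 , 4 } \); the extra digit \( 4 \) (i.e.\ \( f ( k ) = 1 \)) provides the overlap that makes the step always possible. Your route buys uniformity and robustness: the endpoint \( r = 1/3 \) needs no special case, and you never have to argue about the combinatorial shape of the \( 4 \)-ary expansion of numbers below \( 1/3 \) (the existence of the index \( h \), which is the one mildly delicate point in the paper, including the usual caveat about non-unique expansions). The paper's route buys brevity and an explicit formula for \( f \) in terms of the digits of \( r \), with the whole verification collapsing to a two-line telescoping computation.
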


\begin{proof}[Proof of the Claim]
If \( r = 1 / 3 \) then take \( f \) such that \( f ( n ) = 1 \) for all \( n \), so that \( W\sb f = U\sb 1 \).
Therefore we may assume that \( r < 1 / 3 \).
Let \( \seqof{ u\sb n }{ n \geq 1 } \) be the \( 4 \)-ary expansion of \( r \), that is \( u\sb n \in \set{ 0 , 1 , 2 , 3 } \) and \( r = \sum\sb { n = 1 }\sp \infty u\sb n / 4\sp n \).
Let \( h \geq 1 \) be least such that \( u\sb h = 0 \) and \( u\sb n = 1 \) for all \( n < h \)---such \( h \) exists as \( 1 / 3 = \sum\sb { n = 1 }\sp \infty 1/ 4\sp n \).
Letting \( f ( n ) = 1 \) if \( n < h \), and \( f ( n ) = u\sb { n + 1 } / 4 \) if \( n \geq h \) we have that 
\[ 
\mu ( W\sb f ) = \sum\sb {n = 1}\sp \infty \frac{ f ( n ) }{ 2\sp { \lh t\sb n} }= \sum\sb {n = 1}\sp { h - 1 } \frac{1 }{ 4\sp n} + \sum\sb {n = h }\sp \infty \frac{u\sb {n + 1} / 4 }{ 4\sp n} = \sum\sb {n = 1}\sp { h - 1 } \frac{1 }{ 4\sp n} + \sum\sb {n = h + 1 }\sp \infty \frac{u\sb n }{ 4\sp n} = r . \qedhere
\]
\end{proof}

We can now finish the proof: given \( 0 < r < 1 \) we construct a dualistic open set \( U \) such that \( \mu ( U ) = r \), \( \Fr ( U ) \subseteq \setLR{ 0\sp {( \omega )} } \), and \( \density\sb U ( 0\sp {( \omega )} ) = 0 \).
If \( r \leq 1 / 3 \) then by the Claim we can take \( U = W\sb f \), so we may assume that \( 1 / 3 < r < 1 \).
Choose \( d \in \mathbb{D} \) such that \( 0 < r - 1 / 3 < d < 2 / 3 \) and by Lemma~\ref{lem:dyadicvalueofdualistic} let \( V \subseteq U\sb 2 \) be clopen of measure \( d \).
By the Claim there is a \( W\sb f \subseteq U\sb 1 \) of measure \( r - d < 1 / 3 \), so \( U = V \cup W\sb f \) has measure \( r \), and it is dualistic, since if \( x \in \Cl ( U ) \) then either there is a \( k \) such that \( x \restriction k \in A\sb 1 \cup A\sb 2 \), so that \( \density\sb U ( x ) \in \set{ 0 , 1} \), or else \( x = 0\sp {( \omega )} \) and \( \density\sb U ( x ) = \density\sb {W\sb f} ( 0\sp {( \omega )} ) + \density\sb {V} ( 0\sp {( \omega )} ) = 0 \).

The set \( U \) constructed above is open and dualistic; adding the point \( 0\sp {( \omega )} \) we obtain a closed set of the same measure which is still dualistic.
\end{proof}

We can now prove the following result.

\begin{theorem}\label{th:solidcountablerange}
For every countable \( S \subseteq ( 0 ; 1 ) \) there is a solid set \( A \subseteq \pre{\omega}{2} \), which can be either open or closed, such that \( \ran ( \density\sb A ) = S \cup \set{ 0 , 1 } \) and \( \FORALL{ r \in S }\EXISTSONE{z} ( \density\sb A ( z ) = r ) \).
\end{theorem}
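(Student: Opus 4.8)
The plan is to isolate a single \emph{bump} construction that realizes a prescribed intermediate density value $r \in ( 0 ; 1 )$ at exactly one point, with every other density value equal to $0$ or $1$, and then to glue countably many such bumps along a spine converging to $0\sp{( \omega )}$, arranging matters so that the limit point $0\sp{( \omega )}$ is harmless. Throughout I would produce an open set and then pass to its closure for the closed version.

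First I would build the bump. Fix $r \in ( 0 ; 1 )$, choose $c\sb m \in \mathbb{D}$ with $c\sb m \to r$, and for each $m$ a clopen $D\sb m \subseteq \pre{\omega}{2}$ with $\mu ( D\sb m ) = c\sb m$; since $\mathbb{D} \subseteq ( 0 ; 1 )$ each $D\sb m$ is a proper nonempty clopen set. Set
\[
A\sb r = \bigcup\sb m 0\sp{( m )} \conc 1 \conc D\sb m .
\]
For $x = 0\sp{( m )} \conc 1 \conc y \neq 0\sp{( \omega )}$, inside the clopen box $\Nbhd\sb{ 0\sp{( m )} \conc 1 }$ the set $A\sb r$ agrees with the clopen set $0\sp{( m )} \conc 1 \conc D\sb m$, so $\density\sb{ A\sb r } ( x ) = \cchar{D\sb m} ( y ) \in \set{ 0 , 1 }$. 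Moreover $\mu ( A\sb r \cap \Nbhd\sb{ 0\sp{( m )} } ) / \mu ( \Nbhd\sb{ 0\sp{( m )} } ) = \sum\sb{ j } c\sb{ m + j } 2\sp{ - j - 1 } \to r$, a fixed convex average of a tail of a convergent sequence, so $\density\sb{ A\sb r } ( 0\sp{( \omega )} ) = r$. Thus $A\sb r$ is open and solid, $\ran \density\sb{ A\sb r } = \set{ 0 , r , 1 }$ (both $0$ and $1$ occur because each $D\sb m$ is proper and nonempty), and $r$ is attained only at $0\sp{( \omega )}$.

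Second I would assemble. We may assume $S \neq \emptyset$, applying Proposition~\ref{prop:dualisticofallmeasures} in the trivial case $S = \emptyset$. Fix an injective enumeration $\seqof{ r\sb n }{ n \geq 1 }$ of $S$, let $B\sb n = \Nbhd\sb{ 0\sp{( n )} \conc 1 }$, and inside a clopen sub-box $\Nbhd\sb{ s\sb n } \subseteq B\sb n$ chosen so small that $\mu ( A\sb n ) \leq 2\sp{ - n } \mu ( B\sb n )$, place a translated copy $A\sb n$ of $A\sb{ r\sb n }$; put $A = \bigcup\sb n A\sb n$. Since $\setof{ B\sb n }{ n \geq 1 }$ together with $\set{ 0\sp{( \omega )} }$ partitions $\pre{\omega}{2}$, every point lies in a unique $B\sb n$ or equals $0\sp{( \omega )}$. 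Within each $B\sb n$ the set $A$ coincides with $A\sb n$, so the density there is $0$, $1$, or $r\sb n$, the last only at the image $z\sb n$ of the bump point. Writing $a\sb m = \mu ( A\sb m ) / \mu ( B\sb m ) \leq 2\sp{ - m }$, one computes $\mu ( A \cap \Nbhd\sb{ 0\sp{( n )} } ) / \mu ( \Nbhd\sb{ 0\sp{( n )} } ) = \sum\sb{ j } a\sb{ n + j } 2\sp{ - j - 1 } \to 0$, so $\density\sb A ( 0\sp{( \omega )} ) = 0$. Hence $A$ is solid, $\ran \density\sb A = S \cup \set{ 0 , 1 }$, and each $r \in S$ is attained exactly once at the corresponding $z\sb n$. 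Finally $\Fr ( A ) \subseteq \setof{ z\sb n }{ n \geq 1 } \cup \set{ 0\sp{( \omega )} }$ is countable, so $\Cl ( A ) =\sb\mu A$ and $\Cl ( A )$ is a closed set with the same everywhere-defined density function.

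The main obstacle is precisely the accumulation point $0\sp{( \omega )}$: a priori it could be blurry, or could acquire a spurious intermediate value arising as a limit of the $r\sb n$. Burying each bump in a tiny sub-box so that the relative measures $a\sb n \to 0$ is exactly the device that forces $\density\sb A ( 0\sp{( \omega )} ) = 0$ and keeps the range equal to $S \cup \set{ 0 , 1 }$; getting this control right is the heart of the argument, whereas the per-box verification and the passage to the closure are routine once the density profile of the bump $A\sb r$ is established.
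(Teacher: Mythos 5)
Your proof is correct, modulo one harmless slip: with your indexing the boxes \( B_n = \Nbhd_{0^{(n)} \conc 1} \), \( n \geq 1 \), together with \( \set{ 0^{(\omega)} } \) do \emph{not} partition \( \pre{\omega}{2} \) (the cone \( \Nbhd_{\seq{1}} \) is left out); but points of that cone have a clopen neighborhood disjoint from \( A \), hence density \( 0 \), so nothing breaks --- just reindex from \( n = 0 \) or add one line. Structurally your argument has the same two-stage shape as the paper's proof: a set realizing a single intermediate value \( r \) at exactly one point, then countably many copies glued along a spine converging to \( 0^{(\omega)} \), with relative measures decaying so that \( 0^{(\omega)} \) gets density \( 0 \) (the paper achieves the decay by placing the \( n \)-th copy at the node \( 0^{(n)} \conc 1^{(n)} \) of length \( 2n \), you by an explicit \( 2^{-n} \) smallness requirement). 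Where you genuinely diverge is the first stage. The paper's singleton construction invokes Proposition~\ref{prop:dualisticofallmeasures} to obtain a dualistic set \( D \) of \emph{exact} measure \( r \) (open or closed, as needed) and takes \( \bigcup_n 0^{(n)} \conc 1 \conc D \), whose relative measure along \( 0^{(\omega)} \) is constantly \( r \); the closed case is then built in parallel using the closed dualistic \( D \). You instead use clopen sets \( D_m \) of dyadic measure \( c_m \to r \), so the relative measures along \( 0^{(\omega)} \) merely converge to \( r \), which suffices; and you obtain the closed case by passing to \( \Cl ( A ) \) and observing that \( \Fr ( A ) \) is countable, so \( \Cl ( A ) =_\mu A \) and the density function is unchanged. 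Your route is more elementary and self-contained, needing only clopen sets of prescribed dyadic measure rather than the nontrivial construction behind Proposition~\ref{prop:dualisticofallmeasures}; what the paper's version buys is exactness of the density approximants at every level and a uniform treatment of the open and closed variants (the proposition is in any case needed elsewhere in the paper, e.g.\ for Theorem~\ref{th:Sigma11solid}, so it costs the authors nothing to reuse it here).
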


\begin{proof}
If \( S = \emptyset \) let \( A \) be a nonempty, clopen proper subset of \( \pre{\omega}{2} \).
Suppose first \( S = \set{ r } \) is a singleton.
By Proposition~\ref{prop:dualisticofallmeasures} choose \( D\sp 1 \) open and \( D\sp 2 \) closed, both dualistic and such that \( \mu ( D\sp i ) = r \).
Then \( A\sp 1 = \bigcup\sb {n} 0\sp {( n )} \conc 1 \conc D\sp 1 \) is open, \( A\sp 2 = \set{0\sp {( \omega )}} \cup \bigcup\sb {n} 0\sp {( n )} \conc 1 \conc D\sp 2 \) is closed, \( \density\sb {A\sp i} ( 0\sp { ( \omega )} ) = r \), and they are as required.
Finally suppose \( S = \setof{ r\sb n }{ n < N } \) where \( 0 < N \leq \omega \).
By what we just proved, for each \( n < N \) let \( A\sb n\sp 1 \) be open and \( A\sb n\sp 2 \) be closed, satisfying the statement of the theorem for the set \( \set{ r\sb n } \) and such that \( \density\sb {A\sp i\sb n} ( 0\sp {( \omega ) } ) = r\sb n \).
The sets
\[
A\sp 1 = \bigcup\sb {n < N} 0\sp {( n )} \conc 1\sp {( n )} \conc A\sp 1\sb n , \qquad A\sp 2 = \set{0\sp { ( \omega ) } } \cup \bigcup\sb {n < N} 0\sp {( n )} \conc 1\sp {( n )} \conc A\sp 2\sb n
\]
are solid, since \( \density\sb {A\sp i} ( 0\sp {( \omega ) } ) = 0 \) and each \( A\sb n\sp i \) is solid, and \( \ran ( \density\sb {A\sp i} ) = S \), since \( \density\sb {A\sp i} ( 0\sp {( n )} \conc 1\sp {( n )} \conc 0\sp {( \omega ) } ) = r\sb n \) for all \( n \in \omega \).
Moreover \( A\sp 1 \) is open, and \( A\sp 2 \) is closed.
Observe that the uniqueness condition \( \FORALL{ r \in S }\EXISTSONE{z} ( \density\sb A ( z ) = r ) \) follows at once from the construction.
\end{proof}

\subsection{Stretching}\label{subsec:stretching} 
The \markdef{stretch} of \( s \in \pre{ \leq \omega }{2} \) is \( \overline{s} \in \pre{ \leq \omega }{2} \) defined by
\[
\overline{s} = s ( 0 ) \sp { ( 1 )} \conc s ( 1 ) \sp { ( 2 ) } \conc s ( 2 ) \sp { ( 3 ) } \conc \dots .
\]
Thus \( \overline{s} \in \pre{ < \omega }{2} \iff s \in \pre{ < \omega }{2} \).
Any \( X \subseteq \pre{ \omega }{2} \) and \( T \in \Tr\sb 2 \) can be stretched by letting
\[
\overline{X} = \setofLR{ \overline{x} }{ x \in X }, \qquad \overline{T} = \setof{ u \in \pre{ < \omega }{2}}{ \EXISTS{t \in T} ( u \subseteq \overline{t} ) } .
\]
Therefore \( \body{ \rule{1pt}{0pt}\overline{T}\rule{1pt}{0pt}} = \overline{ \body{T}} \) is null.
Every \( z \in \pre{\omega}{2} \setminus \overline{ \pre{\omega}{2} } \) has a largest (possibly empty) initial segment of the form \( \overline{s} \).

The main technical tool in this paper is the construction of continuous maps
\[
\PrTr\sb 2 \to \KK , \qquad T \mapsto K\sb T
\]
witnessing that the collection of all compact sets that have a specific property is \( \bGamma \)-hard for some pointclass \( \bGamma \).
The problem is that continuous reductions (a standard tool in descriptive set theory) do not preserve measure, so the space \( \pre{ \omega }{2} \) will be replaced by some homeomorphic copy \( C \) of measure zero.
The descriptive set theoretic issues are handled by \( C \), while its complement is where dualistic sets of positive measure are added in order for the construction to work.
We need special sequences flagging that we are reaching the complement of \( C \): the collection of all \markdef{flags of order \( n \)} is \( \FLAG ( n ) = \pre{ n + 1 }{2} \setminus \setLR{ 0\sp {( n + 1 )} , 1\sp {( n + 1 ) } } \), so that \( \FLAG ( 0 ) = \emptyset \).
For notational ease let us agree that 
\[
\flag \text{ denotes an element of \( \FLAG ( n ) \) for some } n .
\] 

Suppose we are given a pruned tree \( T \) on \( \set{ 0 , 1 } \) and that \( ( D\sb t )\sb {t \in T} \) is a \markdef{compliant sequence of sets}, meaning that \( D\sb t \) is dualistic, \( \emptyset \neq D\sb t \neq {}\sp {\omega } 2 \) and \( \mu ( \Int D\sb t ) = \mu ( \Cl D\sb t ) \).
(In our applications the \( D\sb t \)s will be either clopen sets, or else will be obtained via Proposition~\ref{prop:dualisticofallmeasures}, so compliance will never be an issue.)
Then 
\begin{align*}
K\sb T & = \overline{\body{T}} \cup \bigcup \setofLR{ \overline{t} \conc \flag \conc \Cl D\sb t } { t \in T \wedge \flag \in \FLAG ( \lh t )} ,
\\
O\sb T &= \bigcup \setofLR{ \overline{t} \conc \flag \conc \Int D\sb t } { t \in T \wedge \flag \in \FLAG ( \lh t )} ,
\end{align*}
are the \markdef{closed} and \markdef{open offspring of \( T \) determined by \( ( D\sb t )\sb {t \in T} \)}, respectively.
Note that \( K\sb T \) is closed, \( O\sb T \) is open, and that \( K\sb T =\sb \mu O\sb T \), so that the density function is the same for both sets.

\begin{lemma}\label{lem:averagealongabranch}
Let \( A \in \set{ O\sb T , K\sb T }\) be the open or closed offspring of \( T \) determined by a compliant \( ( D\sb t )\sb {t \in T} \). 
Let \( x \in \body{T} \), and let \( z = \overline{x} \).
Then \( \density\sb A ( z ) = \lim\sb { n \to \infty } \mu ( D\sb { x \restriction n } ) \) meaning that \( \density\sb A ( z ) \) is defined just in case \( \lim\sb { n \to \infty } \mu ( D\sb { x \restriction n } ) \) exists, and in that case they are equal.
\end{lemma}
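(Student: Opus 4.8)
The plan is to compute, for every level $m$, the relative measure $\mu(\LOC{A}{z\restriction m})$ of $A$ inside the neighborhood $\Nbhd\sb{z\restriction m}$, and to show these numbers track $\mu(D\sb{x\restriction n})$ as $m\to\infty$. Since $K\sb T =\sb\mu O\sb T$ share the same density function, I may take $A=K\sb T$. Write $\ell\sb n=\lh\overline{x\restriction n}=\binom{n+1}{2}$; then $z\restriction\ell\sb n=\overline{x\restriction n}$ (the stretch is prefix-monotone) and $\overline{x\restriction(n+1)}=\overline{x\restriction n}\conc x(n)\sp{(n+1)}$, so the coordinates of $z$ strictly between levels $\ell\sb n$ and $\ell\sb{n+1}$ form the constant block $x(n)\sp{(n+1)}$.

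The key combinatorial step is to identify which pieces $\overline{t}\conc\flag\conc\Cl D\sb t$ meet $\Nbhd\sb{\overline{x\restriction n}}$. I would first check that a flag attached at a proper initial segment $t=x\restriction k$ with $k<n$ cannot contribute: the $k+1$ coordinates of $\overline{x\restriction n}$ immediately following $\overline{t}$ are the constant block $x(k)\sp{(k+1)}\in\set{0\sp{(k+1)},1\sp{(k+1)}}$, whereas by definition every $\flag\in\FLAG(k)$ differs from both $0\sp{(k+1)}$ and $1\sp{(k+1)}$, so $\overline{t}\conc\flag$ is incompatible with $\overline{x\restriction n}$. Hence inside $\Nbhd\sb{\overline{x\restriction n}}$ the set $A$ reduces to the null piece $\overline{\body{T}}$ together with the pieces attached at nodes $t\supseteq x\restriction n$. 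Splitting $\Nbhd\sb{\overline{x\restriction n}}$ over its next $n+1$ coordinates into $2\sp{n+1}$ equal subcubes, the two monochromatic subcubes $0\sp{(n+1)},1\sp{(n+1)}$ continue the stretched tree into the children of $x\restriction n$, while the remaining $2\sp{n+1}-2$ are flags, each carrying a copy of $\Cl D\sb{x\restriction n}$.

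This yields the exact identity
\[
\mu(\LOC{A}{\overline{x\restriction n}})=(1-2\sp{-n})\,\mu(D\sb{x\restriction n})+2\sp{-(n+1)}(c\sb0+c\sb1),
\]
where $c\sb i\in[0;1]$ is the relative measure of $A$ in the child subcube (with $c\sb i=0$ when that child is not in $T$), using compliance to replace $\mu(\Cl D\sb{x\restriction n})$ by $\mu(D\sb{x\restriction n})$. Since $c\sb0+c\sb1\le2$, this gives $\card{\mu(\LOC{A}{\overline{x\restriction n}})-\mu(D\sb{x\restriction n})}\le2\sp{1-n}$. For an intermediate level $m=\ell\sb n+j$ with $1\le j\le n+1$, conditioning additionally on the block $x(n)\sp{(j)}$ discards the opposite child and every flag not beginning with $x(n)\sp{(j)}$, and the same counting gives the convex combination
\[
\mu(\LOC{A}{z\restriction m})=(1-2\sp{-(n+1-j)})\,\mu(D\sb{x\restriction n})+2\sp{-(n+1-j)}\,\mu(\LOC{A}{\overline{x\restriction(n+1)}}),
\]
which therefore lies between $\mu(D\sb{x\restriction n})$ and $\mu(\LOC{A}{\overline{x\restriction(n+1)}})$.

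Finally I would assemble the equivalence by a squeeze. If $L=\lim\sb n\mu(D\sb{x\restriction n})$ exists, then both $\mu(D\sb{x\restriction n})\to L$ and $\mu(\LOC{A}{\overline{x\restriction n}})\to L$ (by the $2\sp{1-n}$ bound), so every intermediate value, being a convex combination of two quantities converging to $L$, also converges to $L$; hence $\density\sb A(z)=L$. Conversely, if $\density\sb A(z)$ exists, its value along the subsequence $m=\ell\sb n$ equals $\lim\sb n\mu(\LOC{A}{\overline{x\restriction n}})$, and the $2\sp{1-n}$ bound then forces $\lim\sb n\mu(D\sb{x\restriction n})$ to exist and to agree with it. The only delicate point is the bookkeeping for the intermediate levels: one must rule out oscillation of the density as $m$ runs through a constant block, which is precisely what the convex-combination formula guarantees. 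The flag-incompatibility observation is what keeps all these local computations clean, since it ensures that only the single set $D\sb{x\restriction n}$—together with the deeper structure, packaged into $\mu(\LOC{A}{\overline{x\restriction(n+1)}})$—is visible inside $\Nbhd\sb{z\restriction m}$.
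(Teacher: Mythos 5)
Your proof is correct and follows essentially the same route as the paper's: both compare \( \mu ( \LOC{A}{ z \restriction \tau\sb n } ) \) with \( \mu ( D\sb { x \restriction n } ) \) at the triangular-number levels via an \( O ( 2\sp {-n} ) \) bound, observe that at intermediate levels the relative measure is a convex combination of (hence lies between) \( \mu ( D\sb { x \restriction n } ) \) and \( \mu ( \LOC{A}{ \overline{ x \restriction n + 1 } } ) \), and conclude with a two-way squeeze. The only difference is that you spell out the flag-incompatibility and the exact counting identity that the paper compresses into ``by construction,'' which is a welcome but inessential elaboration.
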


\begin{proof}
Let \( \tau\sb k = k ( k + 1 ) / 2 \) be the \( k \)-th triangular number, so that \( \lh s = k \IFF \lh \overline{s} = \tau\sb k \).
By construction \( \card{\mu ( \LOC{A}{ z \restriction \tau\sb k } ) - \mu ( D\sb { x \restriction k} ) } \leq 2\sp {- k } \), so \( \mu ( \LOC{A}{ z \restriction \tau\sb k } ) \) converges iff \( \mu ( D\sb { x \restriction k} ) \) does, and in that case their limit is the same.
So if \( \lim\sb { k \to \infty } \mu ( D\sb { x \restriction k } ) \) does not exist then \( \oscillation\sb A ( z ) > 0 \).
Therefore it is enough to show that if \( \lim\sb { k \to \infty } \mu ( D\sb { x \restriction k } ) = r \) then \( \density\sb A ( z ) = \lim\sb {n \to \infty} \mu ( \LOC{A}{z \restriction n } ) = r \).
If \( \tau \sb k < n < \tau\sb { k + 1 } \), then \( z \restriction n = \overline{ x \restriction k} \conc i\sp {( m )} \) with \( 0 < m < k + 1 \) and \( i = x ( k ) \).
But \( \mu ( \LOC{A}{ z \restriction n } ) \) belongs to the closed interval with endpoints \( \mu ( D\sb { x \restriction k } ) \) and \( \mu ( \LOC{A}{ \overline{ x \restriction k + 1 } } ) = \mu ( \LOC{A}{z \restriction \tau\sb {k + 1 } } ) \), so the result follows from our assumptions.
\end{proof}

The next result is an immediate consequence of Lemma~\ref{lem:averagealongabranch} and the definition of offspring, and it is the blueprint for the main constructions in this paper.
It says that given any labelling \( \psi \) of a pruned tree \( T \) we can construct a closed/open offspring of \( T \) such that the behavior of its density function is completely determined by the value of the limit of \( \psi \) along the branches of \( T \).

\begin{theorem}\label{th:template}
Let \( T \in \PrTr\sb 2 \), let \( \psi \colon T \to ( 0 ; 1 ) \), and let \( ( D\sb t )\sb { t \in T } \) be a compliant sequence such that \( \mu ( \Int D\sb t ) = \mu ( \Cl D\sb t ) = \psi ( t ) \).
Let \( K\sb T \) and \( O\sb T \) be the closed and open offsprings of \( T \) generated by \( ( D\sb t )\sb { t \in T } \), and write \( \density \) for \( \density\sb {K\sb T} = \density\sb {O\sb T} \).
Then for all \( z \in \pre{\omega}{2} \) 
\begin{itemize}
\item
if \( z \notin \overline{\body{T}} \) then \( \density ( z )\in \setLR{ 0 , 1 } \),
\item
if \( z = \overline{x} \) with \( x \in \body{T} \) then \( \density ( z ) = \lim\sb { n \to \infty } \psi ( x \restriction n ) \) meaning that \( \density ( z ) \) is defined iff \( \lim\sb { n \to \infty } \psi ( x \restriction n ) \) exists, and in that case they are equal.
\end{itemize}
\end{theorem}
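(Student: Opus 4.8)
The second bullet is really just Lemma~\ref{lem:averagealongabranch} in another guise, so I would dispose of it first. Compliance gives \( \mu ( \Cl D\sb t ) = \mu ( \Int D\sb t ) = \psi ( t ) \), hence whether \( A = K\sb T \) is assembled from the \( \Cl D\sb t \) or \( A = O\sb T \) from the \( \Int D\sb t \), the sets appearing along the branch have \( \mu ( D\sb { x \restriction n } ) = \psi ( x \restriction n ) \). Lemma~\ref{lem:averagealongabranch} then yields \( \density ( z ) = \lim\sb n \mu ( D\sb { x \restriction n } ) = \lim\sb n \psi ( x \restriction n ) \), with the limit existing exactly when \( \density ( z ) \) is defined. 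All the remaining content is therefore in the first bullet, which I would attack by locating precisely where \( z \) leaves \( \overline{\body{T}} \).

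For the first bullet the plan is to read \( z \) in the successive blocks cut out by the stretch. Writing \( \tau\sb k = k ( k + 1 ) / 2 \), the \( k \)-th block of \( z \) occupies the coordinates \( [ \tau\sb k ; \tau\sb { k + 1 } ) \) and has length \( k + 1 \); since \( \FLAG ( k ) = \pre{ k + 1 }{2} \setminus \setLR{ 0\sp {( k + 1 )} , 1\sp {( k + 1 )} } \), this block is either one of the two constant words \( 0\sp {( k + 1 )} , 1\sp {( k + 1 )} \) or a flag. I would define \( t\sb k \in T \) recursively by \( t\sb 0 = \emptyset \) and, given \( t\sb k \in T \), set \( t\sb { k + 1 } = t\sb k \conc i \) provided the \( k \)-th block of \( z \) equals \( i\sp {( k + 1 )} \) and \( t\sb k \conc i \in T \), stopping otherwise. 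If the recursion never halts then \( z = \overline{ x } \) for \( x = \bigcup\sb k t\sb k \in \body{T} \), that is \( z \in \overline{\body{T}} \). So for \( z \notin \overline{\body{T}} \) there is a least stage \( k \) at which it halts, with \( t = t\sb k \), and either (a) the \( k \)-th block is a flag \( \flag \in \FLAG ( k ) \), whence \( z \in \Nbhd\sb { \overline{t} \conc \flag } \), or (b) the \( k \)-th block is some \( i\sp {( k + 1 )} \) with \( t \conc i \notin T \), whence \( z \in \Nbhd\sb { \overline{t} \conc i\sp {( k + 1 )} } \).

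In case (a) I would show that \( A \cap \Nbhd\sb { \overline{t} \conc \flag } = \overline{t} \conc \flag \conc D'\sb t \), where \( D'\sb t \) is \( \Cl D\sb t \) for \( K\sb T \) and \( \Int D\sb t \) for \( O\sb T \); then for large \( n \) the localizations \( \LOC{A}{ z \restriction n } \) coincide with those of the dualistic set \( D\sb t \), so \( \density ( z ) = \density\sb { D\sb t } ( \text{the tail of } z ) \in \setLR{ 0 , 1 } \). In case (b) the same inspection gives \( A \cap \Nbhd\sb { \overline{t} \conc i\sp {( k + 1 )} } = \emptyset \), whence \( \density ( z ) = 0 \). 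The main obstacle — indeed the only step that is not bookkeeping — is establishing these two local identities, i.e.\ that nothing else in \( A \) can intrude into \( \Nbhd\sb { \overline{t} \conc \flag } \) or \( \Nbhd\sb { \overline{t} \conc i\sp {( k + 1 )} } \). This rests on the single combinatorial observation that a stretched node \( \overline{t'} \) consists entirely of constant blocks whereas a flag block is by definition non-constant: any branch \( \overline{y} \) of \( \overline{\body{T}} \) and any summand \( \overline{t'} \conc \flag \conc \Cl D\sb { t' } \) with \( ( t' , \flag ) \) different from the present pair must disagree with \( \overline{t} \conc \flag \) in the block at level \( \lh t \) (constant versus non-constant, or incompatible stretches below that level), so only the summand indexed by \( ( t , \flag ) \) survives; and in case (b) the hypothesis \( t \conc i \notin T \), together with downward closure of \( T \) and non-constancy of flags, rules out both any stretched branch and any flag-copy below \( \overline{t} \conc i\sp {( k + 1 )} \). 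Once these identities are in place, both bullets follow immediately.
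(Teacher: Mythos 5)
Your proof is correct and takes essentially the same route as the paper: the paper offers no separate proof of this theorem, stating it as an immediate consequence of Lemma~\ref{lem:averagealongabranch} (your second bullet) and the definition of offspring (your first bullet). Your block-by-block case analysis merely writes out the details the paper leaves implicit, and your key observation --- stretched nodes have constant blocks while flag blocks are non-constant, so exactly one summand of the offspring survives in the relevant neighborhood --- is precisely the combinatorial content packed into that definition.
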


\section{Three constructions}\label{sec:constructions}
This is the main technical part of the paper, where we construct three maps from pruned trees on \( \setLR{ 0 , 1 } \) to compact subsets of \( \pre{\omega}{2} \).
The results of Sections~\ref{sec:mainresult} and~\ref{sec:projectivesubsetsofKK} are obtained by combining these maps.

\subsection{The first reduction}
For each continuous function \( c \colon \mathcal{N} \to [ 0 ; 1 ] \), the first reduction takes a tree \( T \in \PrTr\sb 2 \) and produces a compact set \( K \) such that each branch \( x \in \body{T} \cap \mathcal{N} \) corresponds to the point \( z = \overline{x} \) such that \( \density\sb K ( z ) = c ( x ) \), while all other exceptional points of \( K \) are blurry. 
Thus, if \( \ran ( c ) \subseteq ( 0 ; 1 ) \), then \( K \) is spongy exactly when \( T \) has no branches in \( \mathcal{N} \).

\begin{theorem}\label{th:firstreduction}
For every continuous \( c \colon\mathcal{N} \to [ 0 ; 1 ] \) there is a continuous function \( \firstreduction\sb c \colon \PrTr\sb 2 \to \KK \) such that for all \( T \in \PrTr\sb 2 \) the compact set \( \firstreduction\sb c ( T ) \) is the closed offspring of a compliant \( ( D\sb t )\sb { t \in T } \) and
\[ 
\FORALL{ x \in\body{T} } ( x \in\mathcal{N} \IFF \oscillation\sb { \firstreduction\sb c ( T ) } ( \overline{x} ) = 0) ,
\]
and whenever \( x \in \body{T} \cap \mathcal{N} \) then \( \density\sb { \firstreduction\sb c ( T ) } ( \overline{x} ) = c ( x ) \).
\end{theorem}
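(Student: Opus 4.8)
The plan is to read off the conclusion from the template theorem (Theorem~\ref{th:template}) by engineering a labelling \( \psi \colon T \to ( 0 ; 1 ) \) whose limiting behaviour along a branch \( x \in \body{T} \) converges exactly when \( x \in \mathcal{N} \), and in that case converges to \( c ( x ) \). First I would fix a canonical dyadic approximation \( \varphi \) of \( c \): viewing \( c \) as a continuous \( \bar{c} \colon \pre{\omega}{\omega} \to [ 0 ; 1 ] \) through the homeomorphism \( \boldsymbol{h} \colon \pre{\omega}{\omega} \to \mathcal{N} \), the approximation yields a map \( \varphi \) on the nodes \( s \in \pre{ < \omega }{2} \) with \( \zt ( s ) = 0 \), taking values in \( \mathbb{D} \subseteq ( 0 ; 1 ) \), such that \( \varphi ( \hd ( x \restriction n ) ) = \bar{ \varphi } ( \boldsymbol{h}\sp {-1} ( x ) \restriction \LENGTH ( \hd ( x \restriction n ) ) ) \to c ( x ) \) whenever \( x \in \body{T} \cap \mathcal{N} \), because along such \( x \) the number \( \LENGTH ( \hd ( x \restriction n ) ) \) of \( 1 \)s tends to infinity.

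Next I would set, for \( t \in T \),
\[
\psi ( t ) = \varphi ( \hd ( t ) ) + 2\sp { - \lh \hd ( t ) - 2 } \cdot \min ( \varphi ( \hd ( t ) ) , 1 - \varphi ( \hd ( t ) ) ) \cdot \sigma ( \zt ( t ) ) ,
\]
where \( \sigma ( k ) = ( - 1 )\sp k \) is any bounded sequence that fails to converge. Since the perturbation has magnitude strictly below \( \min ( \varphi ( \hd ( t ) ) , 1 - \varphi ( \hd ( t ) ) ) \), we get \( \psi ( t ) \in ( 0 ; 1 ) \). Using Proposition~\ref{prop:dualisticofallmeasures} I would pick, canonically, for each \( t \) a dualistic \( D\sb t \) with \( \mu ( \Int D\sb t ) = \mu ( \Cl D\sb t ) = \psi ( t ) \); this gives a compliant sequence \( ( D\sb t )\sb { t \in T } \), and I would define \( \firstreduction\sb c ( T ) = K\sb T \), its closed offspring.

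The heart of the verification is the branch analysis, which splits on whether \( x \in \mathcal{N} \). If \( x \in \body{T} \cap \mathcal{N} \), then \( \lh \hd ( x \restriction n ) \to \infty \), so the perturbation in \( \psi ( x \restriction n ) \) tends to \( 0 \) while \( \varphi ( \hd ( x \restriction n ) ) \to c ( x ) \); hence \( \lim\sb n \psi ( x \restriction n ) = c ( x ) \) exists, and Theorem~\ref{th:template} yields both \( \oscillation\sb { K\sb T } ( \overline{x} ) = 0 \) and \( \density\sb { K\sb T } ( \overline{x} ) = c ( x ) \). If instead \( x \in \body{T} \setminus \mathcal{N} \), then \( \hd ( x \restriction n ) \) stabilises to the fixed node \( \hd ( x ) \) while \( \zt ( x \restriction n ) \to \infty \), so \( \psi ( x \restriction n ) \) equals a constant value of \( \varphi \) plus a perturbation of fixed positive amplitude \( 2\sp { - \lh \hd ( x ) - 2 } \min ( \varphi ( \hd ( x ) ) , 1 - \varphi ( \hd ( x ) ) ) \) times the non‑convergent \( \sigma ( \zt ( x \restriction n ) ) \); thus \( \lim\sb n \psi ( x \restriction n ) \) does not exist and Theorem~\ref{th:template} gives \( \oscillation\sb { K\sb T } ( \overline{x} ) > 0 \). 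This establishes the biconditional and the value statement simultaneously. Note the crucial design feature: the factor \( 2\sp { - \lh \hd ( t ) } \) forces the perturbation to vanish precisely when the head grows (the \( \mathcal{N} \) case) while keeping it of fixed size when the head stabilises (the complementary case).

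It remains to check that \( T \mapsto K\sb T \) is continuous, which I expect to be the main, though routine, obstacle. The point is that \( K\sb T \) is built locally: inside any basic clopen set the portion of \( K\sb T \) is determined by the nodes \( t \in T \) whose stretch \( \overline{t} \) is short, together with the canonically chosen \( D\sb t \), which depend only on the finite node \( t \) through \( \psi \). Consequently, trees agreeing up to a sufficiently large level produce offsprings that are close in the Vietoris topology, whence \( \firstreduction\sb c \) is continuous; the canonicity of the choices supplied by Proposition~\ref{prop:dualisticofallmeasures} is exactly what makes this assignment well defined and continuous.
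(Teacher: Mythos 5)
Your proof is correct and is essentially the paper's own argument: the paper likewise labels each node \( t \in T \) with a dyadic approximation of \( c \) evaluated at \( \hd ( t ) \), perturbed up or down according to the parity of \( \zt ( t ) \) with amplitude shrinking in \( \lh \hd ( t ) \) (it packages this as a pair of approximations \( \varphi\sp - < \varphi\sp + \) with \( \varphi\sp + ( t ) - \varphi\sp - ( t ) < 2\sp { - \lh t } \)), takes the closed offspring, and concludes via Theorem~\ref{th:template} with exactly your branch analysis. The only inessential difference is that your \( \psi ( t ) \) is in fact a dyadic rational, so the paper just uses the canonical clopen sets \( D\sb t \) of measure \( \psi ( t ) \) instead of appealing to Proposition~\ref{prop:dualisticofallmeasures}.
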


\begin{proof}
The idea is that while enumerating \( x \in \body{T} \subseteq \pre{\omega}{2} \), every time we reach a `\( 1 \)' we get one step closer to verifying that the density is \( c ( x ) \), while reaching a `\( 0 \)' will cause a small oscillation around the current approximation of \( c ( x ) \).

Let \( \varphi\sp - , \varphi\sp + \colon \pre{ < \omega}{2} \to \mathbb{D} \) be dyadic approximations of \( c \) (see Section~\ref{subsubsec:continuousfunctionsfromBaire}) such that \( \varphi\sp - ( t ) < \varphi\sp + ( t ) \) and \( \varphi\sp + ( t ) - \varphi\sp - ( t ) < 2\sp { - \lh t } \) for all \( t \in \pre{ < \omega }{2} \).
Let \( \psi \colon T \to \mathbb{D} \) 
\[
 \psi ( t ) = 
 	\begin{cases}
	\varphi\sp - ( \hd t ) & \text{if \( \lh ( \zt t ) \) is even,}
	\\
	\varphi\sp + ( \hd t ) & \text{otherwise.}
	\end{cases}
\]
(The functions \( \hd \) and \( \zt \) are defined in~\eqref{eq:head} and~\eqref{eq:tail}.)
For each \( t \in T \), choose the canonical clopen set \( D\sb t \) of measure \( \psi ( t ) \).
By clopennes \( ( D\sb t )\sb { t \in T} \) is compliant.
Then \( T \mapsto \firstreduction\sb c ( T ) \) is continuous, where \( \firstreduction\sb c ( T ) \) is the closed offspring of \( T \) generated by \( ( D\sb t )\sb { t \in T} \).

Fix \( x \in \body{T} \).
If \( x \in \mathcal{N} \) then \( \lim\sb {n \to \infty} \psi ( x \restriction n ) = c ( x ) \).
If \( x \notin \mathcal{N} \) let \( M \) be least such that \( x ( k ) = 0 \) for all \( k \geq M \): then \( \psi ( x \restriction M + 2 k ) = \varphi \sp - ( x \restriction M ) \) and \( \psi ( x \restriction M + 2 k + 1 ) = \varphi \sp + ( x \restriction M ) \) for all \( k \), so \( ( \psi ( x \restriction n ) )\sb n \) does not converge.
Therefore we are done by Theorem~\ref{th:template}.
\end{proof}

\subsection{The second reduction}
The second reduction takes a tree \( T \in \PrTr\sb 2 \) and produces a compact set \( K \) such that any branch \( x \in \body{T} \cap \mathcal{N} \) corresponds to the point \( z = \overline{z}\) such that \( \oscillation\sb K ( z ) = 1 \), and in all other points the density is either \( 0 \) or \( 1 \); thus \( K \) is quasi-dualistic, and it is dualistic (and hence solid) exactly when \( T \) has no branches in \( \mathcal{N} \).

\begin{theorem}\label{th:secondreduction}
There is a continuous \( \secondreduction \colon \PrTr\sb 2 \to \KK \) such that \( \secondreduction ( T ) \) is a closed offspring of \( T \), and for all \( x \in \body{T} \)
\begin{align*}
x \in \mathcal{N} & \IMPLIES \oscillation\sb { \secondreduction ( T ) } (\overline{x} ) = 1 ,
 \\
x \notin \mathcal{N} & \IMPLIES \density\sb { \secondreduction ( T ) } ( \overline{x} ) \in \set{ 0 , 1} .
\end{align*}
\end{theorem}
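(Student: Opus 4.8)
The plan is to realize $\secondreduction(T)$ as a closed offspring of $T$ through Theorem~\ref{th:template}, so that the entire argument reduces to choosing one labelling $\psi \colon \pre{<\omega}{2} \to (0;1) \cap \mathbb{D}$ whose asymptotic behaviour along a branch encodes membership in $\mathcal{N}$. Recalling that $x \in \mathcal{N}$ iff $x$ has infinitely many $1$s and that $\LENGTH(t)$ counts the $1$s in $t$, I would set
\[
\psi(t) = \begin{cases} 2\sp{-\lh(t)-1} & \text{if } t \text{ is even,} \\ 1 - 2\sp{-\lh(t)-1} & \text{if } t \text{ is odd,}\end{cases}
\]
where even/odd refers to the parity of $\LENGTH(t)$. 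These values are dyadic and lie in $(0;1)$, so for each $t$ one picks the canonical clopen $D\sb t$ with $\mu(D\sb t) = \psi(t)$; clopenness makes $(D\sb t)\sb{t \in T}$ compliant, and $\secondreduction(T)$ is defined to be the closed offspring of $T$ generated by $(D\sb t)\sb{t \in T}$. The design principle is that the \emph{closeness} of $\psi(t)$ to $\set{0,1}$ is driven by $\lh(t)$, which tends to $\infty$ along every branch, while the \emph{choice} between the two targets $0$ and $1$ is flipped by each new $1$, that is, governed by the parity of the number of $1$s seen so far.

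Next I would verify the two properties along a branch $x \in \body{T}$, transferring the behaviour of $\psi$ to the density via Lemma~\ref{lem:averagealongabranch}, whose proof gives $\upperdensity\sb{\secondreduction(T)}(\overline{x}) = \limsup\sb n \psi(x \restriction n)$ and $\lowerdensity\sb{\secondreduction(T)}(\overline{x}) = \liminf\sb n \psi(x \restriction n)$. If $x \notin \mathcal{N}$ then $\LENGTH(x \restriction n)$ is eventually constant, so its parity stabilizes; since $\lh(x \restriction n) = n \to \infty$, the values $\psi(x \restriction n)$ converge to $0$ (stable even parity) or to $1$ (stable odd parity), whence $\density\sb{\secondreduction(T)}(\overline{x}) \in \set{0,1}$. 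If instead $x \in \mathcal{N}$, then $\LENGTH(x \restriction n) \to \infty$ is even for arbitrarily large $n$ and odd for arbitrarily large $n$; evaluating at such $n$ (where $\lh = n$ is already large) gives $\psi(x \restriction n) = 2\sp{-n-1} \to 0$ along the even ones and $\psi(x \restriction n) = 1 - 2\sp{-n-1} \to 1$ along the odd ones, so $\liminf\sb n \psi(x \restriction n) = 0$ and $\limsup\sb n \psi(x \restriction n) = 1$. As $\psi$ is $(0;1)$-valued these are the extreme possible values, so $\oscillation\sb{\secondreduction(T)}(\overline{x}) = 1 - 0 = 1$.

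Continuity of $T \mapsto \secondreduction(T)$ would follow exactly as in Theorem~\ref{th:firstreduction}: the clopen set $D\sb t$ depends only on $t$ through the fixed canonical choice (and not on $T$), so the offspring is assembled locally level by level and the resulting map into $\KK$ is continuous for the Vietoris topology. The one point requiring care — and the reason the form of $\psi$ is what it is — is guaranteeing that the oscillation is \emph{exactly} $1$ for \emph{every} $x \in \mathcal{N}$, irrespective of whether its $1$s are sparse or dense. This is precisely why I let $\lh(t)$ rather than, say, the length of the trailing block of $0$s control the approach to the targets: a branch such as $101010\cdots$ keeps its blocks of $0$s bounded, so a tail-based labelling would only oscillate through a middle range, whereas with $\lh(t)$ both $0$ and $1$ are approximated arbitrarily well as long as $1$s keep appearing, forcing full oscillation.
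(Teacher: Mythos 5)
Your proof is correct and takes essentially the same approach as the paper: the paper uses the mirror-image labelling \( \varphi ( t ) = 1 - 2\sp{ - ( 1 + \lh t ) } \) for even \( t \) and \( 2\sp{ - ( 1 + \lh t ) } \) for odd \( t \), so the only difference is which stabilized parity of \( \LENGTH ( x \restriction n ) \) sends the density to \( 0 \) versus \( 1 \), which is immaterial. Your explicit appeal to the limsup/liminf content of the proof of Lemma~\ref{lem:averagealongabranch} to get oscillation exactly \( 1 \) (rather than merely positive) is precisely the step the paper leaves implicit.
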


\begin{proof}
Let \(\varphi \colon T \to \mathbb{D} \) be the map 
\[
 \varphi ( t ) = \begin{cases}
 1- 2\sp { - ( 1 + \lh t )} & \text{if \( t \) is even,}
 \\
 2\sp { - ( 1 + \lh t )} & \text{if \( t \) is odd,}
 \end{cases}
\]
and choose \( D\sb t \) clopen so that \( \mu ( D\sb t ) = \varphi ( t ) \).
Then \( ( D\sb t )\sb { t \in T } \) is compliant, and let \( \secondreduction ( T ) \) be the closed offspring of \( T \) generated by \( ( D\sb t )\sb { t \in T } \).
The map \( T \mapsto \secondreduction ( T ) = K \) is continuous, and given \( x \in \body{T} \)
\begin{align*}
x \in \mathcal{N} & \IMPLIES \oscillation\sb K ( \overline{x} ) = 1 ,
\\
x\notin \mathcal{N} \text{ is even} & \IMPLIES \density\sb K ( \overline{x} ) = 1 ,
\\
x\notin \mathcal{N} \text{ is odd} & \IMPLIES \density\sb K ( \overline{x} ) = 0 . \qedhere
\end{align*}
\end{proof}

\subsection{The third reduction}
The third reduction takes a tree \( T \in \PrTr\sb 2 \) and produces a compact set \( K \) such that if \( \body{T} \cap \mathcal{N} \neq \emptyset \) then \( ( 0 ; 1 ) \cap \ran \density\sb K = \ran c \) where \( c \) is some continuous function chosen in advance, otherwise \( K \) is spongy.

\begin{theorem}\label{th:thirdreduction}
If \( U \) is a pruned tree on \( \omega \) and \( c \colon \body{U} \to ( 0 ; 1 ) \) is continuous, then there is a continuous function \( \thirdreduction\sb c \colon \PrTr\sb 2 \to \KK \) such that 
\begin{align*}
T \in \WF\sb 2 & \IMPLIES \thirdreduction\sb c ( T ) \in \Spongy ,
\\
T \in \IF\sb 2 & \IMPLIES \ran ( \density\sb {\thirdreduction\sb c ( T )} ) = \ran ( c ) \cup \set{ 0, 1 }.
\end{align*}
\end{theorem}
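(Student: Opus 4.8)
The plan is to obtain \( \thirdreduction_c \) as the composition of the first reduction with the explosion map \( \boldsymbol{E}_2 \). Recall that \( \boldsymbol{E}_2 \) preserves \( \WF_2 \), and that when \( T \in \IF_2 \) the branches of \( \boldsymbol{E}_2(T) \) lying in \( \mathcal{N} \) are the sequences \( x \ltimes y \) obtained by pairing a branch \( x \in \body{T} \cap \mathcal{N} \) with an arbitrary \( y \in \pre{\omega}{\omega} \). The first reduction \( \firstreduction_{c^*} \) turns each branch in \( \mathcal{N} \) into a point of density \( c^* \) and every other point into a blurry point or a point of density \( 0 \) or \( 1 \); so if I arrange for \( c^* \) to read off the parameter \( y \) of an exploded branch, then letting \( y \) range over all of \( \pre{\omega}{\omega} \) will sweep out the entire range of \( c \), while if there is no branch in \( \mathcal{N} \) at all I will be left with a quasi-dualistic set that is not solid, i.e.\ spongy.

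More precisely, I would first transfer \( c \) to all of \( \pre{\omega}{\omega} \) without changing its range: since \( \body{U} \) is a nonempty closed subset of the Baire space it is a retract, so fixing a retraction \( r \colon \pre{\omega}{\omega} \to \body{U} \) the map \( \tilde c = c \circ r \colon \pre{\omega}{\omega} \to (0;1) \) is continuous with \( \ran \tilde c = \ran c \). Using the homeomorphism \( \boldsymbol{h}^{-1} \colon \mathcal{N} \to \pre{\omega}{\omega} \), which sends \( w \) to its sequence of gap-lengths, and the continuous map \( \pi \colon \pre{\omega}{\omega} \to \pre{\omega}{\omega} \) extracting the odd-indexed coordinates, set
\[
c^* \colon \mathcal{N} \to (0;1), \qquad c^*(w) = \tilde c\bigl( \pi( \boldsymbol{h}^{-1}(w) ) \bigr).
\]
This \( c^* \) is continuous with \( \ran c^* \subseteq \ran c \), and I then define \( \thirdreduction_c = \firstreduction_{c^*} \circ \boldsymbol{E}_2 \), which is continuous as a composition of continuous maps. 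The role of \( \pi \) is that, by the interleaving formula defining \( \ltimes \), the gap-sequence of \( x \ltimes y \) is \( \langle n_0, m_0, n_1, m_1, \dots \rangle \) when \( x \) and \( y \) have gap-sequences \( \langle n_i \rangle \) and \( \langle m_i \rangle = y \); hence \( \pi( \boldsymbol{h}^{-1}(x \ltimes y) ) = y \) and \( c^*(x \ltimes y) = \tilde c(y) \).

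It then remains to check the two clauses using Theorem~\ref{th:firstreduction}. If \( T \in \WF_2 \) then \( \boldsymbol{E}_2(T) \in \WF_2 \), so no branch of \( \boldsymbol{E}_2(T) \) lies in \( \mathcal{N} \); thus every point \( \overline{x'} \) with \( x' \in \body{\boldsymbol{E}_2(T)} \) is blurry while all remaining densities are \( 0 \) or \( 1 \), so \( \thirdreduction_c(T) \) is quasi-dualistic, and it is not solid because \( \body{\boldsymbol{E}_2(T)} \) is nonempty and disjoint from \( \mathcal{N} \); hence \( \thirdreduction_c(T) \in \Spongy \). If instead \( T \in \IF_2 \), then
\[
\ran \density_{\thirdreduction_c(T)} = \{0,1\} \cup \bigl\{ c^*(x') \colon x' \in \body{\boldsymbol{E}_2(T)} \cap \mathcal{N} \bigr\},
\]
where \( 0 \) and \( 1 \) are attained thanks to the nontrivial clopen blocks \( D_t \) of the offspring. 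The displayed set of \( c^* \)-values is contained in \( \ran c \); conversely, given \( v = c(p) \in \ran c \) with \( p \in \body{U} \), I fix any \( x \in \body{T} \cap \mathcal{N} \) and use the branch \( x \ltimes p \in \body{\boldsymbol{E}_2(T)} \cap \mathcal{N} \), for which \( c^*(x \ltimes p) = \tilde c(p) = c(p) = v \) since \( r \) fixes \( \body{U} \) pointwise. Therefore the set of \( c^* \)-values equals \( \ran c \) and \( \ran \density_{\thirdreduction_c(T)} = \ran c \cup \{0,1\} \).

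The main obstacle is the surjectivity half of the \( \IF_2 \) case: a single branch of \( T \) in \( \mathcal{N} \) carries only one value of \( c \), so to recover the entire range from the mere existence of one such branch one must first fan that branch out into a full copy of \( \pre{\omega}{\omega} \), which is exactly what \( \boldsymbol{E}_2 \) provides. One must then keep the two coordinates of the exploded branch straight — the \( x \)-coordinate certifies membership in \( \mathcal{N} \), while the \( y \)-coordinate, isolated by \( \pi \), feeds the value \( \tilde c(y) \) — and the retraction \( r \) is what guarantees that transferring \( c \) from \( \body{U} \) to all of \( \pre{\omega}{\omega} \) leaves its range untouched.
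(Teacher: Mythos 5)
Your proof is correct, and it reaches the theorem by a genuinely different route than the paper's. The paper constructs \( \thirdreduction_c ( T ) \) from scratch as a closed offspring of the interleaved tree \( T \oplus V \), where \( \body{V} = \Cl ( \appl{\boldsymbol{h}}{\body{U}} ) \): it hand-crafts the measures \( \mu ( D_s ) \) from dyadic approximations of \( c \), proves a separate Claim producing an approximation that diverges along every infinite fan of \( U \) (this is what forces oscillation when the second coordinate leaves \( \mathcal{N} \)), and then verifies \eqref{eq:thirdreduction-1}--\eqref{eq:thirdreduction-2} by a three-case analysis. You instead set \( \thirdreduction_c = \firstreduction_{c^*} \circ \boldsymbol{E}_2 \), pushing all the measure-theoretic work into Theorem~\ref{th:firstreduction}, used as a black box; your only new ingredients are soft ones, namely the retraction \( r \) of \( \pre{\omega}{\omega} \) onto the closed set \( \body{U} \) and the gap-coordinate projection giving \( c^* ( x \ltimes y ) = c ( r ( y ) ) \). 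The real point of divergence is that \( \boldsymbol{E}_2 \) of~\eqref{eq:Explode} interleaves gap-sequences rather than bits: a branch of \( \boldsymbol{E}_2 ( T ) \) lies in \( \mathcal{N} \) if and only if it is \( x \ltimes y \) with \( x \in \body{T} \cap \mathcal{N} \) and \( y \) a full element of \( \pre{\omega}{\omega} \), so the paper's two bad cases (\( x \notin \mathcal{N} \), or \( y \notin \mathcal{N} \)) collapse into the single case ``branch outside \( \mathcal{N} \)'', which Theorem~\ref{th:firstreduction} already declares blurry, and no new oscillation mechanism is needed. What each approach buys: yours is shorter and more modular; the paper's direct construction establishes strictly more than the statement, namely \eqref{eq:thirdreduction-1}--\eqref{eq:thirdreduction-2} --- the density at \( \overline{x \oplus y} \) equals \( c ( \boldsymbol{h}^{-1} ( y ) ) \), tagged by the second coordinate, and the offspring construction is local in the tree --- and it is these finer facts, not the bare statement, that the proof of Theorem~\ref{th:uniformity} later consumes. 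Your construction enjoys the analogous tagging property (\( \density_{\thirdreduction_c ( T )} ( \overline{x \ltimes y} ) = c ( r ( y ) ) \) depends only on \( y \)), so it could serve there too, but that property would have to be recorded explicitly rather than inferred from the statement. Finally, two facts you assert without proof are routine but deserve a line each: that a nonempty closed subset of \( \pre{\omega}{\omega} \) such as \( \body{U} \) is a retract of \( \pre{\omega}{\omega} \) (follow \( U \) and take leftmost extensions when the input leaves it), and that every element of \( \body{\boldsymbol{E}_2 ( T )} \cap \mathcal{N} \) decomposes as \( x \ltimes y \) with \( x \in \body{T} \cap \mathcal{N} \) and \( y \in \pre{\omega}{\omega} \) (read off the even- and odd-indexed gaps; the even gaps spell out a branch of \( T \) because \( T \) is closed under initial segments).
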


\begin{proof}
By Lemma~\ref{lem:tight} we may assume that \( U \) is gapless and \( c \) is Lipschitz.
Using the homeomorphism \( \boldsymbol{h} \colon \pre{\omega}{\omega} \to \mathcal{N} \) from~\eqref{eq:BaireinCantor0} we can turn \( U \) into a pruned tree \( V \) on \( 2 \).
More precisely, let \( V\in \PrTr\sb 2 \) be such that \( \body{V} = \Cl ( \appl{\boldsymbol{h}}{ \body{U} } ) \), so that \( \body{V} \cap \mathcal{N} = \appl{\boldsymbol{h}}{ \body{U} } \).
The compact set \( K = \thirdreduction\sb c ( T ) \) is obtained as a closed offspring of the pruned tree 
\[
T \oplus V \equalsdef { \downarrow} \setof{ t \oplus v }{ t \in T \wedge v \in V \wedge \lh t = \lh v } ,
\]
with \( t \oplus v \) as in~\eqref{eq:oplus}.
As \( \overline{\body{T \oplus V}} = \setof{ \overline{x \oplus y} }{ x \in \body{T} \AND y \in \body{V}} \), it is enough to guarantee that for all \( x \in \body{T} \) and \( y \in \body{V} \)
\begin{align}
 x \notin \mathcal{N} \vee y \notin \mathcal N & \IMPLIES \oscillation\sb K ( \overline{x \oplus y} ) > 0 , \label{eq:thirdreduction-1}
\\
x \in \mathcal{N} \wedge y \in \mathcal{N} & \IMPLIES \density\sb K ( \overline{x \oplus y} ) = c \bigl ( \boldsymbol h\sp {-1} ( y ) \bigr ). \label{eq:thirdreduction-2}
\end{align}

\begin{claim}
There exists \( \varphi \colon U \to \mathbb{D} \) a dyadic approximation of \( c \) such that for all \( u \in U \)
\begin{gather}
\FORALL{k \in \omega } ( u \conc \seq{ k } \in U ) \IMPLIES \lim\sb { k \to \infty} \varphi ( u \conc \seq{ k } ) \text{ does not exist} , \label{eq:thirdreduction-4}
\\
\shortintertext{and}
 \FORALL{k , h \in \omega } ( u \conc \seq{ k } , u \conc \seq{ h } \in U \IMPLIES \card{ \varphi ( u \conc \seq{ k } ) - \varphi ( u \conc \seq{ h } ) } < 2\sp { 1 - \lh u } ) . \label{eq:thirdreduction-5}
\end{gather}
\end{claim} 

\begin{proof}[Proof of the Claim]
To get such \( \varphi \), start with the canonical dyadic approximation \( \psi \colon U \to \mathbb{D} \) of \( c \). 
For every \( u \in U \) such that \( \FORALL{k \in \omega } ( u \conc \seq{ k } \in U ) \), choose \( d\sb {k , u} \in \mathbb{D} \) so that \( k \mapsto d\sb {k , u} \) does not converge and \( \card{ d\sb {k , u} - \psi ( u \conc \seq{ k } ) } < 2\sp {- ( 1 + \lh u )} \).
Set 
\[
\varphi ( v ) = \begin{cases}
	d\sb {k , u} & \text{if \( v = u \conc \seq{ k } \) and } \FORALL{ j } ( u \conc \seq{ j } \in U ),
	\\
	\psi ( v ) & \text{otherwise.}
 	\end{cases}
\]
The choice of the \( d\sb {k , u} \) guarantees that~\eqref{eq:thirdreduction-4} holds.
Suppose \( u \conc \seq{ h } , u \conc \seq{ k } \in U \): by~\eqref{eq:errorindyadicapproximation} we have that \( \card{ \psi ( u \conc \seq{ h } ) - \psi ( u \conc \seq{ k } ) } < 2\sp {- \lh u} \), and as \( \FORALL{v \in U} ( \card{ \varphi ( v ) - \psi ( v ) } < 2\sp {- \lh v } ) \), then \( \card{ \varphi ( u \conc \seq{ h } ) - \varphi ( u \conc \seq{ k } ) } < 2\sp { 1 - \lh u} \), and hence~\eqref{eq:thirdreduction-5} is satisfied.
It follows that \( \varphi \) is the desired approximation.
\end{proof}

We must define the clopen sets \( D\sb s \) for \( s \in T \oplus V \); as usual it is enough to specify a dyadic value for \( \mu ( D\sb s ) \).
Fix \( \varphi\sp - , \varphi\sp + \colon U \to \mathbb{D} \) such that \( \varphi\sp - ( u ) < \varphi ( u ) < \varphi\sp + ( u ) \) and \( \varphi\sp + ( u ) - \varphi\sp - ( u ) < 2\sp { - \lh u } \) for all \( u \in U \).
\begin{description}
\item[Case 1] 
\( s = t \oplus v \) where \( t \in T \), \( v \in V \). 
Then
\[
\mu ( D\sb s ) = \begin{cases}
 \varphi \sp + \left ( \widehat{ v \restriction \LENGTH ( t ) } \right ) & \text{if \( \zt ( t ) \) is even,}
 \\
 \varphi \sp - \left ( \widehat{ v \restriction \LENGTH ( t ) } \right ) & \text{if \( \zt ( t ) \) is odd.}
\end{cases}
\]
\item[Case 2] 
\( s = ( t \oplus v ) \conc i \) where \( t \in T \), \( v \in V \), and \( i \in 2 \).
Then 
\[
\mu ( D\sb s ) = \varphi ( \hat{w } ) , \text{ where } w = ( v \restriction \LENGTH ( t ) ) \conc 1.
\] 
\end{description}
Given \( x \in \body{T} \) and \( y \in \body{V} \) we have three possibilities:
\begin{itemize}
\item
If \( x \notin \mathcal{N} \) then \( x \oplus y = ( t \conc 0\sp {( \omega ) }) \oplus y \), so~\eqref{eq:thirdreduction-1} holds by Case~1.
\item 
If \( x \in \mathcal{N} \), \( y \notin \mathcal{N} \), let \( v = \hd ( y ) \).
Let \( \seqof{ t\sb h }{ h \in \omega } \) be the sequence of all restrictions of \( x \) ending with a \( 1 \) and such that \( \LENGTH ( t\sb h ) \geq \lh ( v ) \), and set \( s\sb h = ( t\sb h \oplus (y \restriction \lh ( t\sb h ) ) ) \conc x ( \lh ( t\sb h ) ) \) and \( v\sb h = v \conc 0\sp {( h )} \conc \seq{1} \).
Then \( \mu ( D\sb {s\sb h} ) = \varphi \left ( \widehat{ v\sb h } \right ) = \varphi \left ( \hat{v} \conc \seq{ h } \right ) \) does not converge by~\eqref{eq:thirdreduction-4}, so~\eqref{eq:thirdreduction-1} holds.
\item 
Suppose \( x , y \in \mathcal{N} \) and fix \( \varepsilon > 0 \).
By~\eqref{eq:thirdreduction-5} there is \( k\sb 0 \in\omega \) such that for every \( k\geq k\sb 0 \) and every \( m \) one has \( \card{ \varphi ( ( \boldsymbol h\sp {-1} ( y ) \restriction k) \conc \seq{ m } ) - c \left ( \boldsymbol{h}\sp {-1} ( y ) \right ) } < \varepsilon \).
Consequently, for every such \( k \) and every \( n \) such that \( \LENGTH ( y \restriction \LENGTH ( x \restriction n ) ) > k \),
\[
\begin{split}
 \card{ \mu ( D\sb { ( x \restriction n ) \oplus ( y \restriction n) } ) - c \left ( \boldsymbol{h}\sp {-1} ( y ) \right ) } & \leq \card{ \varphi \sp \pm ( \widehat{ y \restriction \LENGTH ( x \restriction n ) }) - \varphi ( \widehat{ y \restriction \LENGTH ( x \restriction n ) }) } 
\\
& \qquad\qquad {}+ \card{ \varphi ( \widehat{ y \restriction \LENGTH ( x \restriction n ) }) - c \left ( \boldsymbol{h}\sp {-1} ( y ) \right ) }
\\
 & < 2\sp {- \lh ( \widehat{ y \restriction \LENGTH ( x \restriction n ) } )} + \varepsilon 
\\
 & = 2\sp {- \LENGTH ( y \restriction \LENGTH ( x \restriction n ) )} + \varepsilon 
\end{split}
\]
and since \( \LENGTH ( x \restriction n ) \geq \LENGTH ( y \restriction \LENGTH ( x \restriction n ) ) \) one has that
\[
 \card{ \mu (D\sb { ( ( x \restriction n ) \oplus ( y \restriction n ) ) \conc x ( n ) } ) - c \left (\boldsymbol{h}\sp {-1} ( y ) \right )} < \varepsilon .
\]
Then \( \lim\sb {n \to \infty} \mu ( D\sb { ( x \oplus y ) \restriction n } ) = c \left ( \boldsymbol{h}\sp {-1} ( y ) \right ) \) and therefore~\eqref{eq:thirdreduction-2} holds. \qedhere
\end{itemize}
\end{proof}

\section{The main result}\label{sec:mainresult}
Which analytic \( S \subseteq [0, 1] \) are of the form \( \ran \density\sb A \) for some measurable set \( A \)?
Both \( \set{0} \) and \( \set{1} \) are of this form---just take \( A = \emptyset \) and \( A = \pre{\omega}{2} \), respectively. 
If \( x \in ( 0 ; 1 ) \) belongs to some \( \ran \density\sb A \), then \( 0 < \mu ( A ) < 1 \), so both \( 0 \) and \( 1 \) belong to \( \ran \density\sb A \).
This implies that if \( \emptyset \neq S \subseteq ( 0 ; 1 ) \) is analytic, then none of \( S \), \( S \cup \set{0} \), \( S \cup \set{1} \) can be the range of a density function.
The next result yields a complete answer to the question at the beginning of this section.

\begin{theorem}\label{th:Sigma11solid}
For any analytic \( S \subseteq ( 0 ; 1 ) \) there is a solid set \( A \subseteq \pre{\omega}{2} \), which can be taken to be either closed or open, such that \( \ran \density\sb A = \setLR{ 0 , 1} \cup S \).
\end{theorem}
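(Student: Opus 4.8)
The plan is to obtain $A$ from a single application of the template (Theorem~\ref{th:template}), choosing a labelling that \emph{converges along every branch}, so that the resulting offspring has no blurry points. The case $S = \emptyset$ is disposed of by taking $A$ to be a nonempty clopen proper subset of $\pre{\omega}{2}$, for which $\ran \density_A = \set{0,1}$. So assume $S \neq \emptyset$ and fix a pruned tree $U$ on $\omega$ together with a continuous surjection $c \colon \body{U} \to S \subseteq (0;1)$; such $U$ and $c$ exist because $S$ is a nonempty analytic subset of $(0;1)$.

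First I would transport $U$ into the Cantor space exactly as in the proof of Theorem~\ref{th:thirdreduction}: let $V \in \PrTr_2$ be the pruned tree with $\body{V} = \Cl(\appl{\boldsymbol h}{\body U})$, so that $\body V \cap \mathcal N = \appl{\boldsymbol h}{\body U}$ is a homeomorphic copy of $\body U$, while $\body V \setminus \mathcal N$ consists only of the (countably many) points $\check u \conc 0^{(\omega)}$ arising from nodes $u \in U$ with infinitely many immediate successors. For every node $s \in V$ one checks that $\hat s \in U$, with $\hat s$ as in~\eqref{eq:hat(t)}. I would then fix, for each $u \in U$, some branch $b_{u} \in \body U$ with $u \subset b_{u}$ (any extension will do).

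The key step is the choice of labelling. Define $\psi \colon V \to (0;1)$ by $\psi(s) = c(b_{\hat s})$, and let $(D_s)_{s \in V}$ be a compliant sequence with $\mu(\Int D_s) = \mu(\Cl D_s) = \psi(s)$, supplied by Proposition~\ref{prop:dualisticofallmeasures} (the values $\psi(s)$ need not be dyadic, which is exactly why the proposition is needed rather than clopen sets). Let $A$ be the closed offspring $K_V$, or the open offspring $O_V$, generated by $(D_s)_{s \in V}$; these are $=_\mu$ and share the same density function. Applying Theorem~\ref{th:template}: for $z \notin \overline{\body V}$ one gets $\density_A(z) \in \set{0,1}$, and for $z = \overline{y}$ with $y \in \body V$ the value is $\lim_n \psi(y \restriction n)$. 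Since $\psi(s)$ depends only on $\hat s$, it is constant on each maximal block of trailing $0$'s of $y$: along an $\mathcal N$-branch $y = \boldsymbol h(b)$ its value on the $k$-th block equals $c(b_{b\restriction k})$, and $b_{b\restriction k} \to b$ forces $\lim_n \psi(y\restriction n) = c(b)$ by continuity of $c$; along a non-$\mathcal N$ branch $y = \check u \conc 0^{(\omega)}$ the sequence is eventually the constant $c(b_{u})$. In either case the limit exists, so no point is blurry and $A$ is solid. The range then comes out right: the $\mathcal N$-branches already realize every $c(b)$, so $S \subseteq \ran \density_A$; the non-$\mathcal N$ branches contribute only the values $c(b_u) \in S$, all other points contribute only $0$ and $1$, and $S \neq \emptyset$ forces $0 < \mu(A) < 1$ and hence $\set{0,1} \subseteq \ran \density_A$. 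Thus $\ran \density_A = S \cup \set{0,1}$.

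The main obstacle, and the whole point of the labelling $\psi(s) = c(b_{\hat s})$, is solidity. The three reductions of Section~\ref{sec:constructions} deliberately make the non-$\mathcal N$ branches blurry; here one must instead route them to values already lying in $S$, so that they neither destroy solidity nor enlarge the range. The one technical point demanding care is convergence $\lim_n \psi(y\restriction n) = c(b)$ along $\mathcal N$-branches whose blocks of $0$'s are arbitrarily long: this is precisely where the block-constancy of $\psi$ (so that long blocks produce no oscillation) together with the continuity of $c$ is essential, and it is what rules out the more naive labellings that decay with the length of the current $0$-run.
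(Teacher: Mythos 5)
Your proof is correct and is essentially the paper's own argument: both pass from \( U \) to the same binary tree (the pruned tree whose body is \( \Cl ( \appl{\boldsymbol h}{\body{U}} ) \), equivalently \( {\downarrow} \setof{ \check{u} }{ u \in U } \)), label its nodes by values in \( S \) depending only on \( \hat{s} \)---so that non-\( \mathcal{N} \) branches get eventually constant labels lying in \( S \) while \( \mathcal{N} \)-branches converge to the corresponding value of \( c \)---and realize those labels as densities via Proposition~\ref{prop:dualisticofallmeasures} and Theorem~\ref{th:template}. The only divergence is in how the labels are produced: the paper first normalizes \( c \) to be Lipschitz (Lemma~\ref{lem:tight}) and takes a canonical \( Q \)-approximation from a countable \( Q \subseteq S \) meeting each interval \( J\sb u \), whereas you evaluate \( c \) at a fixed branch \( b\sb u \supset u \) and invoke plain continuity of \( c \), a mild simplification that changes nothing structural.
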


\begin{proof}
If \( S \) is empty we can take \( A \) to be clopen, so we may assume that \( S \neq \emptyset \).
First we prove the result for \( A \) a closed set.

Let \( c \colon \body{U} \to ( 0 ; 1 ) \) be continuous and such that \( \ran c = S \).
By Lemma~\ref{lem:tight} we may assume that \( c \) is Lipschitz with \( U \) a pruned gapless tree on \( \omega \), \emph{e.g.}, \( U = \pre{ < \omega }{ \omega } \).
For every \( u \in U \) let \( i\sb u = \inf \setof{ c ( x ) }{ x \in \Nbhd\sb u } \) and \( s\sb u = \sup \setof{ c ( x ) }{ x \in \Nbhd\sb u } \) and set
\begin{equation}\label{eq:th:Sigma11solid-0}
J\sb u = \begin{cases}
\left ( i\sb u ; s\sb u \right ) & \text{if } i\sb u < s\sb u ,
\\
\setLR{ i\sb u } &\text{if } i\sb u = s\sb u .
\end{cases} 
\end{equation}
Therefore the length of \( J\sb u \) is \( \leq 2\sp {- \lh ( u ) } \).
Let \( Q = \setof{ q\sb n }{ n \in \omega } \subseteq S \) be such that \( \FORALL{u \in U} ( Q \cap J\sb u \neq \emptyset ) \).
Then \( Q \) is dense in \( S \), and let \( \psi \colon U \to Q \) be defined by
\[
 \psi ( u ) = q\sb k \text{ where \( k \) is least such that } q\sb k \in J\sb u .
\]
Then \( \psi \) is a \( Q \)-approximation of \( c \), and let \( T = {\downarrow} \setof{ \check{u} }{ u \in U } \), so that the function \( \boldsymbol{h } \) of~\eqref{eq:BaireinCantor0} maps homeomorphically \( \body{U} \) onto \( \body{T} \cap \mathcal{N } \).
Let \( \varphi \colon T \to Q \) be defined by \( \varphi ( \check{u} \conc 0\sp {( n )} ) = \psi ( u ) \).

 We will construct dualistic (albeit not necessarily clopen) sets \( D\sb t \) for all \( t \in T \) so that \( A \), the closed offspring of \( T \) determined by \( ( D\sb t )\sb { t \in T } \), satisfies
\begin{subequations}
\begin{align}
z \in \body{ T } \cap \mathcal{N} & \IMPLIES \density\sb A ( \overline{z} ) = c ( \boldsymbol{h }\sp {-1} ( z ) ) \label{eq:th:Sigma11solid-1}
\\ 
z \in \body{ T } \setminus \mathcal{N} & \IMPLIES \density\sb A ( \overline{z} ) \in Q . \label{eq:th:Sigma11solid-2}
\end{align}
\end{subequations}
As \( D\sb t \in \Dual \) then \( \density\sb A ( z ) \in \set{ 0 , 1} \) for all \( z \notin \overline{\body{T} } \), and~\eqref{eq:th:Sigma11solid-1} implies that \( S \subseteq ( 0 ; 1 ) \cap \ran \density\sb A \) while~\eqref{eq:th:Sigma11solid-2} yields the other inclusion.
Therefore \( A \) is solid and \( ( 0 ; 1 ) \cap \ran \density\sb A = S \).

Thus it is enough to define the sets \( D\sb t \).
As every node \( t \in T \) is of the form \( \check{u} \conc 0\sp { ( n ) } \) with \( u \in U \), use Proposition~\ref{prop:dualisticofallmeasures} to choose \( D\sb t \) of measure \( \varphi ( t ) = \psi ( u ) \).
Suppose \( z \in \body{T} \): by Lemma~\ref{lem:averagealongabranch} \( \density\sb A ( \overline{z} ) = \lim\sb {n \to \infty} \mu ( D\sb { z \restriction n } ) = \lim\sb {n \to \infty} \varphi ( z \restriction n ) \).
If \( z \in \mathcal{N} \), then letting \( x = \boldsymbol{h}\sp {-1} ( z ) \), we have that \( \density\sb A ( \overline{z} ) = \lim\sb {n \to \infty} \psi ( x \restriction n ) = c ( x ) \), so~\eqref{eq:th:Sigma11solid-1} holds.
If \( z \notin \mathcal{N} \) then \( \varphi ( z \restriction n ) \) is constantly equal to some \( q\sb k \) for \( n \) sufficiently large, so~\eqref{eq:th:Sigma11solid-2} holds.

As the values of \( \ran ( c ) \) are attained exactly on the frontier of \( A \), then \( \Int A \) is open and such that \( \ran ( \density\sb {\Int A} ) = S \cup \set{ 0 , 1 } \).
\end{proof}

\begin{lemma}\label{lem:Borelsolid}
If \( B \) is an uncountable Borel subset of a Polish space \( X \), then there is a partition \( B = P \cup Q \) such that
\begin{itemize}
\item
\( Q \) countable, 
\item
\( P \subseteq \Cl Q \), and 
\item
\( P \) is the continuous injective image of \( \body{U} \), with \( U \) a perfect tree on \( \omega \).
\end{itemize}
\end{lemma}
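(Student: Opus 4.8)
The plan is to produce $Q$ from a countable dense subset of $B$ and then correct two defects of the naive choice $P = B \setminus Q$: the complement must be a continuous injective image of a \emph{perfect} pruned tree, and it must be contained in $\Cl Q$. Both are arranged by moving a countable scattered remainder into the countable part.

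First I would choose a countable dense $Q\sb 0 \subseteq B$, which exists since $B$ is separable, and set $P\sb 0 = B \setminus Q\sb 0$. As $B$ is Borel and $Q\sb 0$ is countable, $P\sb 0$ is an uncountable Borel set. By the classical representation of Borel sets as continuous injective images of closed subsets of the Baire space \cite[Theorem 13.7]{Kechris:1995kc}, there are a pruned tree $T$ on $\omega$ and a continuous bijection $f \colon \body{T} \to P\sb 0$; recall that the sets $\body{T}$ with $T \in \PrTr$ are exactly the nonempty closed subsets of $\pre{\omega}{\omega}$.

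Next I would apply the Cantor--Bendixson theorem \cite[Theorem 6.4]{Kechris:1995kc} to the closed set $\body{T}$, obtaining $\body{T} = \body{U} \sqcup R'$ where $\body{U}$ is the perfect kernel and $R'$ is countable (the scattered part). Since $P\sb 0$ is uncountable and $f$ is injective, $\body{T}$ is uncountable, so $\body{U} \neq \emptyset$ and the associated pruned tree $U$ is perfect. I would then set $P = \appl{f}{\body{U}}$, $R = \appl{f}{R'}$, and $Q = Q\sb 0 \cup R$. By injectivity of $f$ the sets $\appl{f}{\body{U}}$ and $\appl{f}{R'}$ are disjoint, and both are disjoint from $Q\sb 0$ since they are contained in $P\sb 0 = B \setminus Q\sb 0$; moreover $\appl{f}{\body{U}} \cup \appl{f}{R'} = \appl{f}{\body{T}} = P\sb 0$, so $P \cup Q = P\sb 0 \cup Q\sb 0 = B$ and $B = P \sqcup Q$ is a genuine partition. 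The set $Q$ is countable, $P = \appl{f}{\body{U}}$ is the continuous injective image of $\body{U}$ with $U$ a perfect tree on $\omega$, and $\Cl Q \supseteq \Cl Q\sb 0 \supseteq B \supseteq P$, so $P \subseteq \Cl Q$; this verifies all three clauses.

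The only genuine subtlety is that the tree furnished by \cite[Theorem 13.7]{Kechris:1995kc} need not be perfect, which is precisely why I pass to the perfect kernel; since the discarded part $R'$ is countable, transferring its image $R$ into $Q$ preserves both the countability and the density of $Q$, and therefore costs nothing. I do not expect any further obstacle.
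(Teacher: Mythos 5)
Your proof is correct, but it reaches the conclusion by a somewhat different route than the paper. The paper never leaves the ambient space \( X \): it refines the Polish topology twice (first so that \( B \) becomes closed, then, after deleting a countable \( \tau \)-dense subset \( D \) of the Cantor--Bendixson kernel \( P_0 \), so that the remainder becomes closed in a \emph{zero-dimensional} Polish topology \( \tau_1 \)), applies Cantor--Bendixson in each refined topology, and finally observes that the resulting \( P \) is \( \tau_1 \)-homeomorphic to \( \body{U} \) for a perfect tree \( U \), the injection into \( X \) being the identity map, continuous because \( \tau_1 \) refines \( \tau \). You instead package all of the change-of-topology machinery into a single citation of the representation theorem \cite[Theorem 13.7]{Kechris:1995kc}, and then run Cantor--Bendixson once, inside \( \pre{\omega}{\omega} \), on the domain tree \( T \); there the zero-dimensionality issue disappears, since the perfect kernel of \( \body{T} \) is automatically the body of a pruned tree \( U \). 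Both arguments rest on the same two tools (Borel sets become closed in finer Polish topologies, plus Cantor--Bendixson), so the difference is organizational rather than conceptual, but your version is shorter and shows incidentally that the paper's first Cantor--Bendixson step is dispensable: taking \( Q_0 \) dense in all of \( B \), rather than in the kernel \( P_0 \), already yields \( P \subseteq B \subseteq \Cl Q_0 \subseteq \Cl Q \). One step you assert without argument---that the pruned tree \( U \) of the perfect kernel is a perfect \emph{tree}---is indeed routine, but worth a line: given \( t \in U \), pruning provides a branch \( x \in \body{U} \) through \( t \), and since \( x \) is not isolated there is a different \( y \in \body{U} \cap \Nbhd_t \), so \( t \) has incomparable extensions in \( U \).
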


\begin{proof}
Let \( \tau \) be the topology on \( X \), and let \( \tau \sb 0 \) be a Polish topology extending \( \tau \) such that \( B \) is \( \tau \sb 0 \)-closed.
By Cantor-Bendixson there is a countable set \( C\sb 0 \) such that \( P\sb 0 = B \setminus C\sb 0 \) is closed and perfect with respect to \( \tau\sb 0 \).
Let \( D \subseteq P\sb 0 \) be countable and \( \tau \)-dense in \( P\sb 0 \), and let \( P\sb 1 = P\sb 0 \setminus D \).
Let \( \tau \sb 1 \) be a zero-dimensional Polish topology extending \( \tau\sb 0 \) such that \( P\sb 1 \) is \( \tau \sb 1 \)-closed.
By Cantor-Bendixson again \( P\sb 1 \) can be partitioned as \( P \cup C\sb 1 \) with \( C\sb 1 \) countable and \( P \) closed and perfect with respect to \( \tau\sb 1 \).
Set \( Q = C\sb 0 \cup D \cup C\sb 1 \).
Then \( P \) is \( \tau \sb 1 \)-homeomorphic to \( \body{U} \), for some \( U \) a perfect tree on \( \omega \), and hence it is the \( \tau \)-continuous injective image of \( \body{U} \).
\end{proof}

\begin{theorem}\label{th:Borelsolid}
For any Borel \( B \subseteq ( 0 ; 1 ) \) there is a solid set \( A \in \Range\sb {\mathrm{inj}} \), which can be taken to be either closed or open, such that \( \ran \density\sb A = \setLR{ 0 , 1} \cup B \).
\end{theorem}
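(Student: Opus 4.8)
The plan is to split into two cases and then glue. If $B$ is countable, Theorem~\ref{th:solidcountablerange} already yields a solid open (or closed) set with $\ran \density_A = B \cup \set{0,1}$ in which every value of $B$ is attained exactly once, so $A \in \Range_{\mathrm{inj}}$ and there is nothing more to do. Assume then that $B$ is uncountable and apply Lemma~\ref{lem:Borelsolid} to write $B = P \cup Q$ with $Q$ countable, $P \subseteq \Cl Q$, and $P$ the range of a continuous injection $c \colon \body U \to (0;1)$ for a perfect tree $U$ on $\omega$. Record that $P \cap Q = \emptyset$ and that $c$ is a bijection of $\body U$ onto $P$.

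First I would rerun the construction of Theorem~\ref{th:Sigma11solid} for this $c$, changing one ingredient: the countable set of approximating values is taken to be $Q$ rather than a dense subset of $\ran c = P$. Since $P \subseteq \Cl Q$, each value $c(x)$ is a limit of points of $Q$, so after widening the singleton intervals $J_u$ by $2^{-\lh u}$ (exactly as in the canonical dyadic approximation) every $J_u$ meets $Q$, and indeed meets it in infinitely many points; hence there is a $Q$-valued approximation $\psi \colon U \to Q$ of $c$. Choosing dualistic sets $D_t$ of measure $\psi(\cdot)$ via Proposition~\ref{prop:dualisticofallmeasures} and forming the closed offspring $A_P$ of $T = {\downarrow}\set{\check u : u \in U}$, Lemma~\ref{lem:averagealongabranch} (equivalently Theorem~\ref{th:template}) produces a solid set whose density equals $c(\boldsymbol h^{-1}(z))$ on the branches $z \in \body T \cap \mathcal{N}$ and equals the eventual value $\psi(u) \in Q$ on the remaining branches $z = \check u \conc 0^{(\omega)}$. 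As $c$ is injective, each value of $P$ is attained exactly once, on an $\mathcal{N}$-branch; and since $P \cap Q = \emptyset$, no value of $Q$ is attained on an $\mathcal{N}$-branch. The one thing to arrange is that the values $\psi(u)$ carried by the countably many non-$\mathcal{N}$ branch-nodes be pairwise distinct: enumerating these nodes and using that each $J_u$ meets $Q$ infinitely, I pick $\psi(u)$ there avoiding the finitely many earlier choices. Thus $A_P$ is solid, $\ran \density_{A_P} = P \cup R \cup \set{0,1}$ for some $R \subseteq Q$, and every intermediate value is attained exactly once.

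Finally I would glue. Let $A_Q$ be the solid open (resp.\ closed) set given by Theorem~\ref{th:solidcountablerange} for the countable set $Q \setminus R$, so that $\ran \density_{A_Q} = (Q \setminus R) \cup \set{0,1}$ with each value attained once, and set $A = \seq 0 \conc A_P \cup \seq 1 \conc A_Q$. Because $\Nbhd_{\seq 0}$ and $\Nbhd_{\seq 1}$ are clopen and partition $\pre{\omega}{2}$, density is computed locally inside each slot, so $A$ is solid and $\density_A(\seq i \conc w) = \density_{A_{i}}(w)$; hence $\ran \density_A = P \cup R \cup (Q \setminus R) \cup \set{0,1} = B \cup \set{0,1}$. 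The three pieces $P$, $R$, $Q \setminus R$ are pairwise disjoint and each of their values is attained exactly once in its own slot, so $A \in \Range_{\mathrm{inj}}$, and taking $A_P,A_Q$ both open (both closed) makes $A$ open (closed). The main obstacle is the injectivity bookkeeping on the non-$\mathcal{N}$ branches: the stretch construction unavoidably realizes a countable ``defect'' in $(0;1)$, and one must keep this defect inside $Q$, disjoint from the $P$-values, and of multiplicity one — precisely what the combination of $c$ injective, $P \cap Q = \emptyset$, and $P \subseteq \Cl Q$ (which furnishes infinitely many admissible $Q$-values near each $c$-value) is arranged to provide.
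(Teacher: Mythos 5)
Your proposal is correct and is essentially the paper's own proof: the same decomposition \( B = P \cup Q \) via Lemma~\ref{lem:Borelsolid}, the same modification of the Theorem~\ref{th:Sigma11solid} construction in which the approximating values are chosen greedily and injectively from \( Q \) (possible because each \( J_u \) meets \( Q \), using \( P \subseteq \Cl Q \) and \( P \cap Q = \emptyset \)), and the same final gluing of the resulting offspring with a set from Theorem~\ref{th:solidcountablerange} realizing the countable leftover \( Q \setminus R \). The only difference is cosmetic: since \( P \) has no isolated points, the intervals \( J_u \) are automatically nondegenerate, so your widening of singleton intervals is unnecessary (though harmless).
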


\begin{proof}
As in Theorem~\ref{th:Sigma11solid} it is enough to prove the result when \( A \) is closed.
If \( B \) were countable, the result would follow from Theorem~\ref{th:solidcountablerange}, so we may assume that \( B \) is uncountable.
By Lemma~\ref{lem:Borelsolid} \( B = P \cup Q \) with \( c \colon \body{U} \to P \) a continuous bijection and \( U \) a pruned tree on \( \omega \).
Fix \( \seqof{ u\sb n }{ n \in \omega } \) and \( \seqof{ q\sb n }{ n \in \omega } \) be enumerations without repetitions of \( U \) and \( Q \), respectively. 
Arguing as in the proof of Theorem~\ref{th:Sigma11solid} let \( T = { \downarrow} \setof{ \check{u} }{ u \in U } \in \PrTr\sb 2 \) and for \( u \in U \) let \( J\sb u \subseteq ( 0 ; 1 ) \) be as in~\eqref{eq:th:Sigma11solid-0}.
Define \( \varphi \colon U \to Q \) by induction on \( n \) as follows: 
\[
 \varphi ( u\sb n ) = q\sb k \text{ where \( k \) is least such that } q\sb k \in J\sb {u\sb n} \setminus \setof{ \varphi ( u\sb m) }{ m < n} .
\]
This is well-defined since, as \( P \) has no isolated points, then \( J\sb {u\sb n} = ( i\sb {u\sb n } , s\sb {u\sb n } ) \) with \( i\sb {u\sb n } < s\sb {u\sb n } \), and so \( ( J\sb {u\sb n} \setminus \setof{ \varphi ( u\sb m) }{ m < n} ) \cap P \) is a nonempty, relatively open subset of \( P \).
As in Theorem~\ref{th:Sigma11solid} construct dualistic sets \( D\sb t \) with \( t \in T \) such that \( \mu ( D\sb t ) = \varphi ( u ) \), where \( \check{u} = \hd ( t ) \).
Let \( K\sb 0 \) be the closed offspring of \( T \) generated by \( ( D\sb t )\sb { t \in T} \): if \( z \in \body{T} \cap \mathcal{N} \) then \( \density\sb {K\sb 0} ( \overline{z} ) = c ( \boldsymbol{ h }\sp {-1} ( z ) ) \), and if \( z \in \body{T} \setminus \mathcal{N} \) then \( \density\sb {K\sb 0} ( \overline{z} ) \in Q \).
If \( z , w \in \body{T} \setminus \mathcal{N} \) are distinct, then \( u\sb n = \hd ( z ) \) and \( u\sb m = \hd ( w ) \) are distinct, and therefore \( \density\sb {K\sb 0} ( \overline{z} ) = \varphi ( u\sb n ) \neq \varphi ( u\sb m ) = \density\sb {K\sb 0} ( \overline{w} ) \).
Therefore by construction \( K\sb 0 \in {\KK} \cap \Range\sb {\mathrm{inj}} \) and
\[
P \subseteq \ran \density\sb { K\sb 0} \subseteq P \cup Q .
\]
The set \( S = B \setminus \ran \density\sb { K\sb 0} \) is countable, so by Theorem~\ref{th:solidcountablerange} there is \( K\sb 1 \in \KK \cap \Range\sb {\mathrm{inj}} \) such that \( \ran \density\sb { K\sb 1 } = S \).
Then \( K = 0 \conc K\sb 0 \cup 1 \conc K\sb 1 \) is the compact set we were looking for.
\end{proof}

By putting together the previous results we have:

\begin{theorem}\label{th:universalSigma11}
Let \( \mathcal{U} = \setofLR{ ( K , r ) \in \KK \times ( 0 ; 1 ) }{ \EXISTS{ z \in \pre{ \omega }{2} } ( \density\sb K ( z ) = r ) } \), and let \( \mathcal{U}\sp \complement = \left ( \KK \times ( 0 ; 1 ) \right ) \setminus \mathcal{U} \).
Then
\begin{enumerate-(a)}
\item
\( \mathcal{U} \) is \( \varSigma\sp 1\sb 1 \) and universal for \( \bSigma\sp {1}\sb {1} \) subsets of \( ( 0 ; 1 ) \), and therefore \( \mathcal{U}\sp \complement \) is \( \varPi\sp 1\sb 1 \) and universal for \( \bPi\sp {1}\sb {1} \) subsets of \( ( 0 ; 1 ) \).
\item
\( \mathcal{U} \cap \left ( \Solid \times ( 0 ; 1) \right ) \) parametrizes the \( \bSigma\sp {1}\sb {1} \) subsets of \( ( 0 ; 1 ) \), that is every analytic set \( S \subseteq ( 0 ; 1 ) \) is of the form \( \vsection{\mathcal{U}}{K} \) with \( K \) solid.
Similarly \( \mathcal{U}\sp \complement \cap \left ( \Solid \times ( 0 ; 1) \right ) \) parametrizes the \( \bPi\sp {1}\sb {1} \) subsets of \( ( 0 ; 1 ) \).
\item
\( \mathcal{U} \cap \left ( ( \Solid \cap \Range\sb {\mathrm{inj}} ) \times ( 0 ; 1) \right ) \) parametrizes the \( \bDelta\sp {1}\sb {1} \) subsets of \( ( 0 ; 1 ) \), and therefore \( \mathcal{U}\sp \complement \cap \left ( ( \Solid \cap \Range\sb {\mathrm{inj}} ) \times ( 0 ; 1) \right ) \) is \( \varPi\sp 1\sb 1 \) and parametrizes the \( \bDelta\sp {1}\sb {1} \) subsets of \( ( 0 ; 1 ) \).
\end{enumerate-(a)}
\end{theorem}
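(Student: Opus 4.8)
The plan is to assemble all three parts from the complexity bounds and realization theorems already proved; the parts differ only in the family of compact sets to which we restrict the parameter.

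First I would pin down the lightface complexity of \( \mathcal{U} \). Writing \( \mathcal{B} = \setofLR{ ( K , z , r ) }{ \density\sb K ( z ) = r } \) for the set of Lemma~\ref{lem:measureofcompactsinCantor}, which is \( \bPi\sp {0}\sb {4} \) and, the computation being uniform, effectively \( \varPi\sp 0\sb 4 \), the set \( \mathcal{U} \) is obtained by intersecting with the open condition \( r \in ( 0 ; 1 ) \) and projecting out the variable \( z \in \pre{\omega}{2} \). Hence \( \mathcal{U} \) is \( \varSigma\sp 1\sb 1 \) and \( \mathcal{U}\sp \complement \) is \( \varPi\sp 1\sb 1 \), and every vertical section \( \vsection{\mathcal{U}}{K} = \ran \density\sb K \cap ( 0 ; 1 ) \) is \( \bSigma\sp {1}\sb {1} \), as already recorded in the introduction.

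For the realization assertions in parts~(a) and~(b) I would invoke Theorem~\ref{th:Sigma11solid} verbatim: given an analytic \( S \subseteq ( 0 ; 1 ) \) it furnishes a \emph{solid} \( K \), closed or open as desired, with \( \ran \density\sb K = \set{ 0 , 1 } \cup S \), so that \( \vsection{\mathcal{U}}{K} = S \). Thus the vertical sections of \( \mathcal{U} \) exhaust exactly the analytic subsets of \( ( 0 ; 1 ) \) --- no fewer by the theorem, no more by the bound above --- and the same \( K \), being solid, witnesses the parametrization in part~(b). Passing to complements inside \( ( 0 ; 1 ) \), the sections \( \vsection{\mathcal{U}\sp \complement}{K} = ( 0 ; 1 ) \setminus \vsection{\mathcal{U}}{K} \) then exhaust exactly the coanalytic subsets, settling the \( \bPi\sp {1}\sb {1} \) halves of parts~(a) and~(b).

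Part~(c) has the same shape, with Theorem~\ref{th:Borelsolid} supplying, for every Borel \( B \subseteq ( 0 ; 1 ) \), a solid \( K \in \Range\sb {\mathrm{inj}} \) with \( \vsection{\mathcal{U}}{K} = B \). The single point needing an extra argument is the reverse inclusion, that such sections are genuinely Borel. If \( K \) is solid then \( z \mapsto \density\sb K ( z ) \) is a total Borel map on \( \pre{\omega}{2} \), and if moreover \( K \in \Range\sb {\mathrm{inj}} \) it is injective on the Borel set \( \setof{ z }{ \density\sb K ( z ) \in ( 0 ; 1 ) } \) of sharp points; by the Lusin-Souslin theorem the injective Borel image of a Borel set is Borel, so \( \vsection{\mathcal{U}}{K} = \ran \density\sb K \cap ( 0 ; 1 ) \) is \( \bDelta\sp {1}\sb {1} \). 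Hence the sections of \( \mathcal{U} \cap ( ( \Solid \cap \Range\sb {\mathrm{inj}} ) \times ( 0 ; 1 ) ) \) are precisely the Borel subsets of \( ( 0 ; 1 ) \), and since Borel sets are closed under complementation the same holds for \( \mathcal{U}\sp \complement \). This one appeal to Lusin-Souslin --- upgrading the analytic bound to Borel under the injectivity hypothesis --- is the only real obstacle; everything else is bookkeeping over the earlier theorems.
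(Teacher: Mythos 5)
Your proposal is correct and follows exactly the route the paper intends: the paper states this theorem with no written proof beyond ``by putting together the previous results,'' meaning the lightface bound on \( \mathcal{U} \), Theorem~\ref{th:Sigma11solid} for parts (a)--(b), and Theorem~\ref{th:Borelsolid} for part (c). Your explicit Lusin--Souslin argument (injective Borel images of Borel sets are Borel) is precisely the missing converse needed to see that sections over \( \Solid \cap \Range\sb{\mathrm{inj}} \) are genuinely \( \bDelta\sp{1}\sb{1} \), so you have filled in the one detail the paper leaves implicit.
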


\begin{theorem}\label{th:uniformity}
If \( S \subseteq ( 0 ; 1 ) \) is an uncountable analytic set, then \( \setof{ K \in \KK }{ \ran \density\sb K = \set{ 0 , 1 } \cup S } \) is \( \bPi\sp {1}\sb {2} \)-complete in \( \KK \).

In particular \( \setof{ K \in \KK }{ \ran \density\sb K = [ 0 ; 1 ] } \) is \( \bPi\sp {1}\sb {2} \)-complete.
\end{theorem}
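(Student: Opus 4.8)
The plan is to pair the upper bound, which is already available, with a single continuous reduction. For membership, observe that
\[
\setofLR{ K \in \KK }{ \ran \density\sb K = \set{ 0 , 1 } \cup S } = j\sp {-1} \bigl ( \widehat{\Range} ( S ) \bigr ) ,
\]
where \( j \colon \KK \to \MALG \) is the Baire class \( 1 \) map of~\eqref{eq:embeddingKintoMALG}. Since \( \widehat{\Range} ( S ) \) is \( \bPi\sp {1}\sb {2} \) by Theorem~\ref{th:upperbound}\ref{th:upperbound-c}, and the preimage of a \( \bPi\sp {1}\sb {2} \) set under a Borel map is \( \bPi\sp {1}\sb {2} \), the set is \( \bPi\sp {1}\sb {2} \) (this is the transfer recorded in the last line of Theorem~\ref{th:upperbound}). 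It remains to prove \( \bPi\sp {1}\sb {2} \)-hardness, for which I would reduce the classical \( \bPi\sp {1}\sb {2} \)-complete set
\[
\mathcal T = \setofLR{ R \in \Tr\sb { \omega \times \omega } }{ \FORALL{ y \in \pre{\omega}{\omega} } \EXISTS{ x \in \pre{\omega}{\omega} } \, ( x , y ) \in \body{ R } } ,
\]
of trees whose body projects onto all of Baire space along the second coordinate; abbreviate \( Y\sb R = \setofLR{ y }{ \EXISTS{ x } ( x , y ) \in \body{ R } } \), so \( R \in \mathcal T \iff Y\sb R = \pre{\omega}{\omega} \). The goal is to build \( K\sb R \) whose intermediate density values are exactly \( S \) with one ``hole'' for each \( y \notin Y\sb R \).

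To make a single missing \( y \) robustly detectable I first fix an injective continuous \( e \colon \pre{\omega}{\omega} \to S \) whose range \( E \) is Borel: compose the homeomorphism \( \boldsymbol h \) of~\eqref{eq:BaireinCantor0} with a homeomorphism of \( \mathcal N \) onto a Cantor subset of \( S \) (one exists by the perfect set property for the uncountable analytic \( S \)), so that \( E \cong \mathcal N \) is \( \Gdelta \), hence Borel, and \( S \setminus E \) is again analytic. The reduction then has two pieces, glued in complementary clopen sets as \( K\sb R = 0 \conc K\sb * \cup 1 \conc \firstreduction\sb c ( V\sb R ) \). For the \emph{base}, use Theorem~\ref{th:Sigma11solid} to fix one solid set \( K\sb * \) with \( \ran \density\sb { K\sb * } = ( S \setminus E ) \cup \set{ 0 , 1 } \); it supplies exactly the values of \( S \) lying outside \( E \). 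For the \emph{variable part}, identify \( \omega \times \omega \) with \( \omega \) through the recursive bijection \( J \) and, exactly as in the proof of Theorem~\ref{th:Sigma11solid}, set \( V\sb R = {\downarrow} \setof{ \check{ w } }{ w \in R } \) with \( \check{\,} \) as in~\eqref{eq:check(t)}; membership of each node of \( V\sb R \) depends on only finitely many nodes of \( R \), so \( R \mapsto V\sb R \) is continuous, and (after the routine passage to the pruned tree with the same body) \( V\sb R \in \PrTr\sb 2 \) with \( \boldsymbol h \) mapping \( \body{ R } \) homeomorphically onto \( \body{ V\sb R } \cap \mathcal N \). Define the fixed continuous \( c \colon \mathcal N \to ( 0 ; 1 ) \) by \( c ( \boldsymbol h ( x , y ) ) = e ( y ) \) and apply the first reduction (Theorem~\ref{th:firstreduction}).

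Then \( R \mapsto \firstreduction\sb c ( V\sb R ) \) is continuous and, since only branches in \( \mathcal N \) carry intermediate values, \( ( 0 ; 1 ) \cap \ran \density\sb { \firstreduction\sb c ( V\sb R ) } = \appl{ c }{ \body{ V\sb R } \cap \mathcal N } = \appl{ e }{ Y\sb R } \subseteq E \). Because the two summands occupy complementary clopen sets, the density of \( K\sb R \) on \( \Nbhd\sb {\seq 0} \) reproduces that of \( K\sb * \) and on \( \Nbhd\sb {\seq 1} \) that of \( \firstreduction\sb c ( V\sb R ) \), whence \( ( 0 ; 1 ) \cap \ran \density\sb { K\sb R } = ( S \setminus E ) \cup \appl{ e }{ Y\sb R } \). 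If \( R \in \mathcal T \) then \( Y\sb R = \pre{\omega}{\omega} \), so \( \appl{ e }{ Y\sb R } = E \) and \( \ran \density\sb { K\sb R } = S \cup \set{ 0 , 1 } \). If \( R \notin \mathcal T \), pick \( y\sb 0 \notin Y\sb R \); by injectivity of \( e \) the value \( e ( y\sb 0 ) \in E \subseteq S \) is not in \( \appl{ e }{ Y\sb R } \), and being in \( E \) it is not in \( S \setminus E \) either, so it is missing and \( \ran \density\sb { K\sb R } \neq S \cup \set{ 0 , 1 } \). Thus \( R \mapsto K\sb R \) reduces \( \mathcal T \), giving hardness; the final assertion follows by taking \( S = ( 0 ; 1 ) \), for which \( E \subseteq ( 0 ; 1 ) \) and \( S \setminus E \) is Borel.

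The main obstacle is precisely the robust detection of one missing \( y \). The first reduction can only \emph{add} intermediate values—an ill-founded \( R ( y ) \) produces the value \( e ( y ) \), while a well-founded \( R ( y ) \) merely creates blurry points that contribute nothing to the range—so one cannot signal a bad \( y \) by manufacturing a forbidden value (indeed for \( S = ( 0 ; 1 ) \) there is no room outside \( S \) at all). One must instead arrange that a bad \( y \) \emph{removes} a value, and this is exactly what forces the injective, Borel-range coding \( e \) together with the split \( S = ( S \setminus E ) \sqcup E \): the base supplies \( S \setminus E \) once and for all, while the variable part alone is responsible for the values in \( E \), one for each \( y \), so that a single omitted \( y\sb 0 \) leaves a genuine hole in the range.
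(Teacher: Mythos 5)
Your high-level strategy is sound and genuinely different from the paper's: the paper reduces an arbitrary \( \bPi\sp{1}\sb{2} \) set \( P = \setof{z}{\FORALL{y \in \mathcal{N}}((y,z) \in A)} \) directly, using the third reduction \( \thirdreduction\sb c \) with a continuous bijection \( c \colon \pre{\omega}{\omega} \to C \) onto a perfect subset \( C \subseteq S \), whereas you reduce the classical complete set \( \mathcal{T} \) using the first reduction and an injective coding \( e \) with Borel range; your splitting \( S = (S \setminus E) \sqcup E \) plays the role of the paper's \( S = (S\setminus C) \cup C \), and your detection logic (an omitted \( y\sb 0 \) leaves the hole \( e(y\sb 0) \)) is correct, as is the upper bound via \( j \). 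The genuine gap is continuity of the reduction, which is exactly where the paper spends its effort. For your \( V\sb R = {\downarrow}\setof{\check{w}}{w \in R} \), membership of a node is \emph{not} determined by finitely many nodes of \( R \): for \( s = \check{u} \conc 0\sp{(k)} \) with \( k \geq 1 \) one has \( s \in V\sb R \iff \EXISTS{m \geq k}(u \conc \seq{m} \in R) \), a condition on infinitely many nodes. This breaks continuity of \( R \mapsto \firstreduction\sb c ( V\sb R ) \): the preimage of the Vietoris-open set \( \setof{K \in \KK}{K \cap \Nbhd\sb{\overline{s}\conc\flag} = \emptyset} \) (with \( \flag \) a flag of order \( \lh s \)) is \( \setof{R}{s \notin V\sb R} \), which is closed but not open. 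The ``routine passage to the pruned tree with the same body'' is likewise not continuous (whether a node lies on an infinite branch is again an infinitary condition), and \( \firstreduction\sb c \) is defined, and claimed continuous, only on \( \PrTr\sb 2 \). So as written you obtain a Borel reduction, i.e.\ Borel-\( \bPi\sp{1}\sb{2} \)-hardness; that is strictly weaker than the theorem's claim, since hardness in this paper means reducibility by \emph{continuous} maps, and at the second projective level the equivalence of Borel-completeness with completeness is only known under projective determinacy (the \( \ZFC \) theorem of Kechris quoted in the preliminaries covers level one only). The paper's proof is built around precisely this obstacle: its Claim manufactures a tree \( T \) on \( 2\times2\times2 \) all of whose sections \( T(z) \) are \emph{already pruned}, and its map \( G(U) = \thirdreduction\sb c(\pre{<\omega}{2}) \setminus \bigcup\sb{t \in \pre{<\omega}{2} \setminus U}\Nbhd\sb{\overline{t}} \) makes the removal of nodes a continuous operation.

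Fortunately your scheme can be repaired without changing its logic: set \( V\sb R = {\downarrow}\setof{\check{w} \conc 0\sp{(n)}}{w \in R \AND n \in \omega} \), attaching the infinite tail of \( 0 \)s to \emph{every} node rather than only at dead ends. Then \( s \in V\sb R \iff \hat{s} \in R \), so membership depends on a single node of \( R \) and \( R \mapsto V\sb R \) is continuous; \( V\sb R \) is pruned outright, as every node extends to some \( \check{u} \conc 0\sp{(\omega)} \in \body{V\sb R} \); one still has \( \body{V\sb R} \cap \mathcal{N} = \appl{\boldsymbol{h}}{\body{R}} \); and the extra branches \( \check{u} \conc 0\sp{(\omega)} \) lie outside \( \mathcal{N} \), hence by Theorem~\ref{th:firstreduction} they are blurry for \( \firstreduction\sb c ( V\sb R ) \) and contribute nothing to the range. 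With this single modification your reduction is continuous and the rest of your argument goes through.
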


\begin{proof}
Let \( C \subseteq S \) be a closed perfect set. 
By Theorem~\ref{th:Sigma11solid} fix a compact set \( H \) such that \( \ran ( \density\sb H ) = \set{ 0 , 1 } \cup S \setminus C \).
For \( P \subseteq \pre{ \omega }{2} \) a \( \bPi\sp {1}\sb {2} \) set we construct a continuous map \( F \colon \pre{\omega}{2} \to \KK \) so that 
\begin{itemize}
\item
\( \ran ( \density\sb {F ( z ) } ) \subseteq \set{ 0 , 1 } \cup C \) for all \( z \in \pre{\omega}{2} \), and
\item
\( z \in P \iff \ran ( \density\sb {F ( z ) } ) = \set{ 0 , 1 } \cup C \).
\end{itemize}
Therefore \( z \mapsto 0 \conc F ( z ) \cup 1 \conc H \) witnesses that 
\[ 
P \leqW \setof{ K \in \KK }{ \ran ( \density\sb K ) = \set{ 0 , 1 } \cup S } ,
\] 
which is what we have to prove.

So fix \( P = \setof{ z \in \pre{ \omega }{2} }{ \FORALL{ y \in \mathcal{N} } \left ( ( y , z ) \in A \right )} \) with \( A \subseteq \mathcal{N} \times \pre{ \omega }{2} \) a \( \bSigma\sp {1}\sb {1} \) set.
\begin{claim}
There is \( T \in \PrTr \sb { 2 \times 2 \times 2 } \) such that 
\[ 
A = \setof{( y , z ) \in \mathcal{N} \times \pre{ \omega }{2} }{ \EXISTS{ x \in \mathcal{N}} \left ( ( x , y , z ) \in \body{T} \right ) } 
\]
and \( T ( z ) \) is pruned, for all \( z \in \pre{\omega}{2} \).
\end{claim}

\begin{proof}
Fix \( U \in \PrTr \sb { 2 \times 2 \times 2 } \) such that \( A = \setof{( y , z ) }{ \EXISTS{ x \in \mathcal{N}} \left ( ( x , y , z ) \in \body{U} \right ) } \), and let 
\begin{multline*}
T = U \cup \setl{ ( u \conc 0\sp {(k)} , v\conc 0\sp {(k)} , z \restriction k + \lh u ) }{ \lh u = \lh v \wedge z \in \pre{\omega}{2} \wedge k \in \omega}{ {} \wedge ( u , v ) \text{ is a terminal node of } T ( z ) }
\\
\setr{ ( u \conc 0\sp {(k)} , v\conc 0\sp {(k)} , z \restriction h + k ) }{ h = \lh u = \lh v \wedge z \in \pre{\omega}{2} \wedge k \in \omega}{ {} \wedge ( u , v ) \text{ is a terminal node of } U ( z ) } . \qedhere
\end{multline*}
\end{proof}

Fix \( c \colon \pre{\omega}{\omega} \to C \) a continuous bijection~\cite[Exercise 7.15]{Kechris:1995kc}, and let \( \thirdreduction = \thirdreduction\sb c \) be as in Theorem~\ref{th:thirdreduction}.
Then \( \thirdreduction ( {}\sp { < \omega }2 ) \) is the largest offspring that can be constructed using \( \thirdreduction \). 
The function \( G \colon \PrTr\sb 2\to \KK \) 
\[
G ( U ) = \thirdreduction ( {}\sp { < \omega }2 ) \setminus \bigcup\sb { t \in {}\sp { < \omega } 2 \setminus U} \Nbhd\sb { \bar t }
\]
is continuous.
Indeed, if \( U , U' \) coincide on all sequences of length at most \( n \), then for every \( z \in G ( U ) \) there is \( z' \in G ( U' ) \) with \( z \restriction \tau\sb n = z' \restriction \tau\sb n \), where \( \tau\sb n = n ( n + 1) / 2 \).
Notice also that \( G ( U ) \) is a closed offspring of \( U \).

Let now \( T \) be as in the Claim and let \( f \colon {}\sp {\omega }2\to \PrTr\sb 2 \) be the continuous function defined by
\[
 f ( z ) = {\downarrow} \setof{ u \oplus v }{ ( u , v ) \in T ( z ) } .
\]
Set \( F = G \circ f \).
By the definition of \( \thirdreduction \) and Theorem~\ref{th:template} applied to the tree \( f ( z ) \), one has that \( \ran ( \density\sb { F ( z ) } ) \subseteq \set{ 0 , 1 } \cup C \) for every \( z \in {}\sp {\omega }2 \).
Moreover, again from the definition of \( \thirdreduction \) and Theorem~\ref{th:template}
\begin{align*}
z \in P & \iff \FORALL{y \in \mathcal{N}} ( y , z ) \in A \notag
\\
 & \iff \FORALL{y \in \mathcal{N}} \EXISTS{ x \in \mathcal{N}} \left ( ( x , y , z ) \in \body{ T} \right ) 
 \\
 & \iff \FORALL{y \in \mathcal{N}} \EXISTS{ x \in \mathcal{N}} \left ( ( x , y ) \in \body{ T( z ) } \right ) 
 \\
 & \iff \FORALL{ y \in \mathcal{N} } \EXISTS{ x \in \mathcal{N}} \left ( x \oplus y \in \body{ f ( z ) } \right ) 
 \\
 & \iff \FORALL{ y \in \mathcal{N} } \EXISTS{ x \in \mathcal{N}} \left ( \density\sb {F( z ) } ( \overline{ x \oplus y } ) = c ( \boldsymbol h\sp { - 1 } ( y ) ) \right ) 
 \\
 & \iff \FORALL{ y \in \mathcal{N} } \left ( c ( \boldsymbol h\sp {-1} ( y ) ) \in \ran ( \density\sb {F ( z ) } ) \right ) & \text{(as \( c \) is injective)}
 \\
 & \iff \ran ( \density\sb { F ( z ) } ) = \set{ 0 , 1 } \cup C . 
\end{align*}
So \( F \) is the desired reduction.
\end{proof}

\section{Projective subsets of \texorpdfstring{\( \KK \)}{K} and \texorpdfstring{\( \MALG \)}{MALG}} \label{sec:projectivesubsetsofKK}
We analyze the descriptive set theoretic complexity of certain collections of elements of \( \KK \) and of \( \MALG \) defined by means of the density function---these collections are natural and they deserve to be classified within the projective hierarchy.
It is convenient to introduce the following definition: if \( \mathcal{A} \subseteq \MEAS \) and \( \bGamma \) is a pointclass, then we say that \markdef{\( \mathcal{A} \) is \( \bGamma \) inside \( \KK \)} whenever \( \mathcal{A} \cap \KK \) is in \( \bGamma ( \KK ) \).

\begin{theorem}\label{th:hardness}
Let \( n < \omega \), and work inside \( \KK \).
\begin{enumerate-(a)}
\item\label{th:hardness-a}
The following collections of sets are \( \bPi\sp {1}\sb {1} \)-complete:
\begin{enumerate}[label={\upshape (a\arabic*)}, leftmargin=2pc]
\item\label{th:hardness-a-1}
\( \SHARP\sb { n } \), \( \SHARP\sb { \leq n } \), \( \SHARP\sb { < \omega } \), \( \SHARP\sb { \leq \omega } \), \( \SHARP\sb {\omega } \),
\item\label{th:hardness-a-2}
 \( \BLR\sb {n} \), \( \BLR\sb {\leq n} \), \( \BLR\sb { < \omega } \), \( \BLR\sb { \leq \omega } \), \(\BLR\sb { \omega} \),
\item\label{th:hardness-a-3}
 \( \Range \sb { \leq n } \), \( \Range \sb { < \omega } \), \( \Range \sb { \leq \omega } \), \( \Range\sb {\mathrm{inj}} \), \( \Range\sb {\MGR} \), \( \Range \sb { \lambda \leq a } \) for \( a \in \cointerval{ 0 }{1 } \) and \( \Range \sb { \lambda < a } \) for any \( a \in \ocinterval{ 0 }{1 }\),
\item\label{th:hardness-a-4}
\( \Solid \), \( \qDual \), and \( \Dual \),
\end{enumerate}
\item\label{th:hardness-b}
The following collections of sets are \( 2 \)-\( \bSigma\sp {1}\sb {1} \)-complete: \( \Range \sb { n + 1 } \), \( \Range \sb { \omega } \), \( \Range \sb { \lambda = a } \), and \( \Range ( S ) \) with \( a \in ( 0 ; 1 ) \) and \( \emptyset \neq S \subseteq ( 0 ; 1 ) \) countable.
\item\label{th:hardness-c}
\( \Spongy \) is \( 2 \)-\( \bSigma\sp {1}\sb {1} \) and it is both \( \bSigma\sp {1}\sb {1} \)-hard and \( \bPi\sp {1}\sb {1} \)-hard.
\end{enumerate-(a)}
\end{theorem}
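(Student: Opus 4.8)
The upper bounds for all three parts are already available: the corresponding subsets of \( \MALG \) were bounded in Theorem~\ref{th:upperbound}, whose closing remark transfers those bounds to \( \KK \). Hence the whole content of the theorem is \emph{hardness}, and the plan is to reduce, by one of the three continuous maps \( \firstreduction\sb c,\secondreduction,\thirdreduction\sb c \) of Section~\ref{sec:constructions}, an appropriate complete set: the sets \( \Br{\kappa}{2} \) (\( \bPi\sp 1\sb 1 \)-complete for every \( 1\leq\kappa\leq\omega \), by the argument of~\eqref{eq:Brarecomplete}) and \( \WF\sb 2 \) for part~\ref{th:hardness-a}; the \( 2 \)-\( \bSigma\sp 1\sb 1 \)-complete set \( \WF\sb 2\times\IF\sb 2 \) for part~\ref{th:hardness-b}; and \( \IF\sb 2 \), \( \WF\sb 2 \) for part~\ref{th:hardness-c}. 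Throughout I use that every exceptional point of an offspring lies in \( \overline{\body T} \) (by Theorem~\ref{th:template}) and that \( x\mapsto\overline x \) is injective, so the number of sharp, resp.\ blurry, points of a reduct equals the number of the relevant branches.

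For part~\ref{th:hardness-a}: by Theorem~\ref{th:firstreduction}, when \( \ran c\subseteq(0;1) \) the sharp points of \( \firstreduction\sb c(T) \) are exactly \( \setof{\overline x}{x\in\body T\cap\mathcal N} \), whence \( \card{\Sharp(\firstreduction\sb c(T))}=\card{\body T\cap\mathcal N} \); thus \( \firstreduction\sb c \) reduces \( \Br{\bowtie\kappa}{2} \) to \( \SHARP\sb{\bowtie\kappa} \) for the matching subscripts, giving~\ref{th:hardness-a-1}. Dually, Theorem~\ref{th:secondreduction} yields \( \card{\Blur(\secondreduction(T))}=\card{\body T\cap\mathcal N} \), so \( \secondreduction \) reduces \( \Br{\bowtie\kappa}{2} \) to \( \BLR\sb{\bowtie\kappa} \), giving~\ref{th:hardness-a-2}; as \( \secondreduction(T) \) is always quasi-dualistic, it is solid, equivalently dualistic, iff \( T\in\WF\sb 2 \), and \( \firstreduction\sb c \) reduces \( \WF\sb 2 \) to \( \qDual=\SHARP\sb 0 \), giving~\ref{th:hardness-a-4}. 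For~\ref{th:hardness-a-3}, the classes \( \Range\sb{\leq n},\Range\sb{<\omega},\Range\sb{\leq\omega},\Range\sb{\MGR},\Range\sb{\lambda\leq a},\Range\sb{\lambda<a} \) are all reduced from \( \WF\sb 2 \) by one \( \thirdreduction\sb c \): choosing \( \ran c\subseteq(0;1) \) to be an uncountable set for the first three, a non-meager set for \( \Range\sb{\MGR} \), a set of measure \( >a \) for \( \Range\sb{\lambda\leq a} \), and a set of measure \( \geq a \) for \( \Range\sb{\lambda<a} \), Theorem~\ref{th:thirdreduction} makes \( \ran\density\sb{\thirdreduction\sb c(T)}\cap(0;1) \) empty for \( T\in\WF\sb 2 \) and equal to \( \ran c \) for \( T\in\IF\sb 2 \), so the target holds iff \( T\in\WF\sb 2 \); and \( \Range\sb{\mathrm{inj}} \) is reduced from \( \Br{\leq 1}{2} \) by \( \firstreduction\sb c \) with \( c \) constant \( r\sb 0 \), since then \( r\sb 0 \) is attained once per branch and injectivity holds iff \( \card{\body T\cap\mathcal N}\leq 1 \).

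For part~\ref{th:hardness-b} the \( 2 \)-\( \bSigma\sp 1\sb 1 \) structure is produced by superimposing two third reductions: given \( (T\sb 1,T\sb 2) \) put
\[
K = 0\conc\thirdreduction\sb{c\sb a}(T\sb 1)\cup 1\conc\thirdreduction\sb{c\sb b}(T\sb 2),
\]
a continuous function of \( (T\sb 1,T\sb 2) \); since the density of \( K \) at a point beginning with \( 0 \), resp.\ \( 1 \), equals the density of the first, resp.\ second, summand at the shifted point, \( \ran\density\sb K\cap(0;1) \) is the union of the two summands' intermediate ranges. By Theorem~\ref{th:thirdreduction} the \( j \)-th summand contributes \( \ran c \) if the \( j \)-th tree is in \( \IF\sb 2 \) and nothing if it is in \( \WF\sb 2 \). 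Taking \( \ran c\sb b \) equal to \( S \), to a set of \( n+1 \) points, to a countably infinite set, or to a set of measure \( a \), and taking \( \ran c\sb a \) equal to a single point off \( S \), an interval, an interval, or a positive-measure interval disjoint from \( \ran c\sb b \) of measure \( \neq a \), one verifies over the four cases \( (T\sb 1,T\sb 2)\in\setLR{\WF\sb 2,\IF\sb 2}\sp 2 \) that the defining condition of \( \Range(S) \), \( \Range\sb{n+1} \), \( \Range\sb\omega \), \( \Range\sb{\lambda=a} \) holds exactly when \( T\sb 1\in\WF\sb 2 \) and \( T\sb 2\in\IF\sb 2 \); thus \( K \) reduces \( \WF\sb 2\times\IF\sb 2 \). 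For part~\ref{th:hardness-c}, \( \secondreduction \) reduces \( \IF\sb 2 \) to \( \Spongy \) (as \( \secondreduction(T) \) is always quasi-dualistic and has a blurry point iff \( T\in\IF\sb 2 \)), giving \( \bSigma\sp 1\sb 1 \)-hardness, while \( \thirdreduction\sb c \) with \( \emptyset\neq\ran c\subseteq(0;1) \) reduces \( \WF\sb 2 \) to \( \Spongy \) (as \( \thirdreduction\sb c(T) \) is spongy iff \( T\in\WF\sb 2 \)), giving \( \bPi\sp 1\sb 1 \)-hardness.

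The delicate point is the \emph{exactness} required in part~\ref{th:hardness-b}. A naive superposition in which an ill-founded \( T\sb 1 \) merely added one stray density value would break the reductions for \( \Range\sb 1 \) (the case \( n=0 \)) and for \( \Range\sb{\lambda=a} \): a lone extra value, or an extra set of measure \( a \), could coincide with the target count or measure and wrongly place \( K \) in the class. The remedy is to let the \( T\sb 1 \)-summand contribute, whenever \( T\sb 1\in\IF\sb 2 \), an \emph{uncountable} family of values for the counting classes, or a positive-measure set disjoint from \( \ran c\sb b \) of measure \( \neq a \) for \( \Range\sb{\lambda=a} \), so that an ill-founded first coordinate always overshoots and can never be confused with the exact target. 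Checking this bookkeeping in all four cases is the only step requiring genuine care; everything else is a direct application of the three reduction theorems.
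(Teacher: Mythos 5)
Your proposal is correct, and it shares the paper's skeleton --- upper bounds from Theorem~\ref{th:upperbound} (whose closing remark transfers them to \( \KK \)), hardness by feeding \( \WF\sb 2 \), \( \IF\sb 2 \), \( \Br{ \bowtie \kappa }{2} \), and \( \WF\sb 2 \times \IF\sb 2 \) into the reductions of Section~\ref{sec:constructions} --- but several individual reductions are implemented differently, so a comparison is worthwhile. Where the paper proves \( \SHARP\sb{\omega} \)- and \( \BLR\sb{\omega} \)-hardness by gluing countably many copies of \( \firstreduction\sb{\mathbf{c}(r)} \bigl ( 0 \conc U \cup 1 \conc \boldsymbol{E}\sb 2 ( T ) \bigr ) \) with \( U \in \Br{1}{2} \) (thereby re-deriving, inside \( \KK \), the completeness of ``exactly \( \omega \) branches''), you instead quote that \( \Br{\omega}{2} \) is \( \bPi\sp{1}\sb{1} \)-complete; this is true and follows by the paper's own argument (glue a fixed member of \( \Br{\omega}{2} \) onto \( \boldsymbol{E}\sb 2 ( T ) \)), but since \eqref{eq:Brarecomplete} lists only \( \Br{n}{2} \), \( \Br{\leq n}{2} \), \( \Br{<\omega}{2} \), \( \Br{\leq\omega}{2} \), you should spell that step out rather than cite it. In~\ref{th:hardness-a-3} and~\ref{th:hardness-b} you route almost everything through \( \thirdreduction\sb c \) with an overshooting range (uncountable, non-meager, or of suitable measure), whereas the paper mixes tools: \( \firstreduction \)-based gluings with auxiliary fixed compact sets for \( \Range\sb{\leq n} \), \( \Range\sb{<\omega} \), \( \Range\sb{\leq\omega} \), \( \Range\sb{\mathrm{inj}} \), \( \Range\sb{\omega} \), and \( \Range ( S ) \), reserving pairs of third reductions for \( \Range\sb{n+1} \) and \( \Range\sb{\lambda = a} \) only. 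Your uniform scheme is leaner, and your ``uncountable overshoot'' is exactly the device the paper realizes via the extra value \( r\sb{n+1} \) and the constraint \( 1 - b \neq a \), so you identified the genuine pitfall (exact counts and exact measures in part~\ref{th:hardness-b}) and handled it correctly; what the paper's mixed scheme buys in exchange is that part~\ref{th:hardness-a} stays within the two simpler reductions and never needs \( \Br{\omega}{2} \). Your remaining deviations --- reducing \( \Br{\leq 1}{2} \) to \( \Range\sb{\mathrm{inj}} \) via a constant-valued first reduction instead of the paper's gluing of a fixed one-sharp-point compact, and using \( \thirdreduction\sb c \) in place of \( \firstreduction\sb c \) for the \( \bPi\sp{1}\sb{1} \)-hardness of \( \Spongy \) --- are also correct, one-line verifications in either version.
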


\begin{proof}
With Theorem~\ref{th:upperbound} establishing the upper bounds, it is enough to focus on the hardness results.
We use the functions constructed in the preceding pages:
\begin{itemize}
\item
 \( \boldsymbol{E}\sb 2 \colon \PrTr\sb 2 \to \PrTr\sb 2 \) of~\eqref{eq:Explode}, and
\item
the reductions \( \firstreduction\sb c , \secondreduction , \thirdreduction\sb c \colon \PrTr\sb 2 \to \KK \) of Theorems~\ref{th:firstreduction}, \ref{th:secondreduction}, and~\ref{th:thirdreduction}.
\end{itemize}
Recall also that \( \Br{ n }{ 2 } \), the set of all pruned trees \( T \) on \( 2 \) with exactly \( n \) branches in \( \mathcal{N} \), is \( \bPi\sp {1}\sb {1} \)-complete by~\eqref{eq:Brarecomplete}, and so are the sets \( \Br{ \leq n }{ 2 }\), \( \Br{ < \omega}{ 2 }\), \( \Br{ \leq \omega}{ 2 } \).

We are now ready to prove the various clauses of the theorem.
For \( 0 < r < 1 \) let \( \mathbf{c} ( r ) \colon \mathcal{N} \to ( 0 ; 1 ) \) be the constant function with value \( r \). 
\begin{enumerate}[leftmargin=2pc]
\item[\ref{th:hardness-a-1}]
\( \firstreduction\sb{\mathbf{c} ( r )} ( T ) \in \Range\sb { \leq 1} \) for every \( T \in \PrTr \sb 2 \) and every \( r \in ( 0 ; 1 ) \), so
\begin{itemize}
\item
 \( T \in \WF\sb 2 \IFF \firstreduction\sb{\mathbf{c} ( r )} ( T ) \in \Range\sb {0} \), and hence \( \Range\sb {0} \) is \( \bPi\sp {1}\sb {1} \)-hard,
 \item
 \( T \in \WF\sb 2 \IFF \firstreduction\sb{\mathbf{c} ( r )} ( \boldsymbol{E}\sb 2 ( T ) ) \in \SHARP\sb { < \omega }\), so \( \SHARP\sb { < \omega } \) is \( \bPi\sp {1}\sb {1} \)-hard.
 \item
 \( T \in \Br{ \kappa }{2} \IFF \firstreduction\sb{\mathbf{c} ( r )} ( T ) \in \SHARP\sb { \kappa } \) and \( T \in \Br{ \leq \kappa }{2} \IFF \firstreduction\sb{\mathbf{c} ( r )} ( T ) \in \SHARP\sb { \leq \kappa } \), for all \( \kappa \leq \omega \), and hence \( \SHARP\sb { n } \), \( \SHARP\sb { \leq n } \), \( \SHARP\sb { \leq \omega } \) are \( \bPi\sp {1}\sb {1} \)-hard. 
\end{itemize}

Let \( U \in \Br{1}{2} \) and let 
\begin{equation}\label{eq:th:hardness-a}
 I \colon \PrTr\sb 2 \to\PrTr\sb 2 , \quad T \mapsto 0 \conc U \cup 1 \conc \boldsymbol{E}\sb 2 ( T ) .
\end{equation}
Then 
\[
\PrTr\sb 2 \to \KK , \quad T \mapsto \set{ 0\sp {( \omega ) }} \cup \textstyle \bigcup\sb {n \in \omega } 0\sp {( n )} \conc 1 \conc \firstreduction\sb{\mathbf{c} ( r )} ( I ( T ) ) 
\] 
maps \( \WF\sb 2 \) to \( \SHARP\sb { \omega} \) and \( \IF\sb 2 \) to \( \SHARP\sb { > \omega}\).
Therefore \( \SHARP\sb {\omega} \) is \( \bPi\sp {1}\sb {1} \)-hard.

\item[\ref{th:hardness-a-2}]
As \( T \in \Br{ n }{2} \iff \secondreduction ( T ) \in \BLR\sb { n } \) and \( T \in \Br{ \leq \kappa }{2} \iff \secondreduction ( T ) \in \BLR\sb { \leq \kappa } \) with \( \kappa \leq \omega \), then \( \BLR\sb {n} \), \( \BLR\sb { \leq n} \), and \( \BLR\sb { \leq \omega } \) are \( \bPi\sp {1}\sb {1} \)-hard.

The function \( \secondreduction \circ \boldsymbol{E}\sb 2 \) witnesses that \( \WF\sb 2 \leqW \BLR\sb {< \omega } \), and the function
\[ 
\PrTr\sb 2 \to \KK , \quad T \mapsto \set{ 0\sp {( \omega ) }} \cup \textstyle\bigcup\sb {n \in \omega } 0\sp {( n )} \conc 1 \conc \secondreduction ( I ( T ) ) 
\] 
with \( I \) as in~\eqref{eq:th:hardness-a} witnesses that \( \BLR\sb {\omega} \) is \( \bPi\sp {1}\sb {1} \)-hard.

\item[\ref{th:hardness-a-3}]
Let \( K \in \Range\sb {n} \cap \KK \), and let \( r \in ( 0 ; 1 ) \setminus \ran \density\sb K \).
Then 
\[
\PrTr\sb 2 \to \KK , \quad T \mapsto ( 0 \conc K ) \cup \left ( 1 \conc \firstreduction\sb{\mathbf{c} ( r )} ( T ) \right ) 
\]
witnesses that \( \WF\sb 2 \leqW \Range\sb { \leq n} \), so \( \Range\sb { \leq n} \) is \( \bPi\sp {1}\sb {1} \)-hard.

If the \( r\sb n \in (0 ; 1 ) \) are distinct, then
\[
 \PrTr\sb 2 \to \KK , \qquad T \mapsto \set{ 0\sp {( \omega ) }} \cup \textstyle\bigcup\sb {n \in \omega } 0\sp {( n + 1 )} \conc 1\sp {( n  )} \conc \firstreduction\sb { \mathbf{c} ( r\sb n ) } ( T )
\]
maps \( \WF\sb 2 \) to \( \Range\sb {< \omega } \) and \( \IF\sb 2 \) to \( \Range\sb \omega \), so \( \Range\sb {< \omega } \) is \( \bPi\sp {1}\sb {1} \)-hard and \( \Range\sb \omega \) is \( \bSigma\sp {1}\sb {1} \)-hard.

If \( K = \firstreduction\sb{ \mathbf{c} ( r ) } ( U ) \) for some fixed \( U \in \Br{1}{2} \), then
\[
\PrTr\sb 2 \to \KK , \quad T \mapsto \bigl ( 0 \conc K \bigr ) \cup \bigl ( 1 \conc \firstreduction\sb{ \mathbf{c} ( r ) } ( T) \bigr )
\]
witnesses that \( \WF\sb 2 \leqW \Range\sb { \mathrm{inj} } \).

If \( c \colon \mathcal{N} \to ( 0 ; 1 ) \) is continuous and injective, then \( T \in \Br{ \leq \omega }{2} \IFF \firstreduction\sb c ( T ) \in \Range\sb { \leq \omega } \), so \( \Range\sb {\leq \omega } \) is \( \bPi\sp {1}\sb {1} \)-hard.

If \( c \colon \mathcal{N} \to ( 0 ; 1 ) \) is continuous and surjective, then the map \( \thirdreduction\sb c \) of Theorem~\ref{th:thirdreduction} witnesses that \( \WF\sb 2 \leqW \Range\sb { \MGR } \), that \( \WF\sb 2 \leqW \Range\sb {\lambda \leq a } \) for any \( a \in \cointerval{ 0 }{ 1 } \), and that \( \WF\sb 2 \leqW \Range\sb { \lambda < a } \) for any \( a \in \ocinterval{ 0 }{ 1 } \).

\item[\ref{th:hardness-a-4}]
By~\eqref{eq:dual} \( \qDual = \SHARP\sb 0 = \Range\sb 0 \) so \( \qDual \) is \( \bPi\sp {1}\sb {1} \)-hard by part~\ref{th:hardness-a-1}.
As \( \secondreduction ( T ) \in \qDual \) for all \( T \in \PrTr\sb 2 \), and 
\[
T \in \WF\sb 2 \IMPLIES \secondreduction ( T ) \in \Dual \subseteq \Solid \quad\text{and} \quad T \in \IF\sb 2 \IMPLIES \secondreduction ( T ) \in \Spongy 
\]
therefore \( \Dual \) and \( \Solid \) are \( \bPi\sp {1}\sb {1} \)-hard.

\item[\ref{th:hardness-b}]
For \( 1 \leq n < \omega \) choose distinct \( r\sb i \in ( 0 ; 1 ) \) for \( i \leq n + 1\).
Let \( c , d \colon \pre{\omega}{ \omega } \to ( 0 ; 1 ) \) be continuous and such that \( \ran ( c ) = \set{ r\sb 0 , \dots , r\sb n } \) and \( \ran ( d ) = \ran ( c ) \cup \set{ r\sb { n + 1} } \).
The map
\[ 
\PrTr\sb 2 \times \PrTr\sb 2 \to \KK , \quad ( U , T ) \mapsto 0 \conc \thirdreduction\sb { d } ( U ) \cup 1 \conc \thirdreduction\sb { c } ( T ) 
\]
witnesses that \( \WF\sb 2 \times \IF\sb 2 \leqW \Range\sb {n + 1} \), so \( \Range\sb {n + 1} \) is \( 2 \)-\( \bSigma\sp {1}\sb {1} \)-hard.

Let \( c \colon \mathcal{N} \to ( 0 ; 1 ) \) be continuous and injective, and let \( r\sb n \in (0 ; 1 ) \) be distinct.
Then \( \PrTr\sb 2 \times \PrTr\sb 2 \to \KK \)
\[
 ( U , T ) \mapsto \set{ 0\sp {( \omega )} } \cup \bigl ( \textstyle\bigcup\sb {n \in \omega } 0\sp { ( n + 1 ) } \conc 1\sp { ( n  ) } \conc \firstreduction\sb {\mathbf{c} ( r \sb n )} ( T ) \bigr ) \cup 1 \conc \firstreduction\sb c ( \boldsymbol{E}\sb 2 ( U ) )
\] 
reduces \( \WF\sb 2 \times \IF\sb 2 \) to \( \Range\sb { \omega } \), so \( \Range\sb { \omega } \) is \( 2 \)-\( \bSigma\sp {1}\sb {1} \)-hard.

Let \( \emptyset \neq S \subseteq ( 0 ; 1 ) \) be countable, say \( S = \setof{ r\sb n }{ n \in \omega } \), and let \( r \in ( 0 ; 1 ) \setminus S \).
Then the map \( \PrTr\sb 2 \times \PrTr\sb 2 \to \KK \)
\[
( U , T ) \mapsto \set{ 0\sp {( \omega )} } \cup \bigl ( \textstyle \bigcup\sb { n \in \omega } 0\sp { ( n + 1 ) } \conc 1\sp { ( n ) } \conc \firstreduction\sb { \mathbf{c} ( r \sb n ) } ( T ) \bigr ) \cup \bigl ( 1 \conc \firstreduction\sb { \mathbf{c} ( r ) } ( U ) \bigr ) 
\]
witnesses that \( \WF\sb 2 \times \IF\sb 2 \leqW \Range ( S ) \).

Finally, for \( a \in ( 0 ; 1 ) \) let us show that \( \Range \sb { \lambda = a } \) is \( 2 \)-\( \bSigma\sp {1}\sb {1} \)-hard.
Let \( c\sb 1 , c\sb 2 \colon \pre{\omega}{\omega} \to ( 0 ; 1 ) \) be continuous and such that \( \ran c\sb 1 = ( b ; 1 ) \) and \( \ran c\sb 2 = ( 0 ; a ) \), where \( a \leq b < 1 \) and \( 1 - b \neq a \).
Then 
\[ 
\PrTr\sb 2 \times \PrTr\sb 2 \to \KK , \quad ( U , T ) \mapsto 0 \conc \thirdreduction\sb { c\sb 1 } ( U ) \cup 1 \conc \thirdreduction\sb { c\sb 2 } ( T ) 
\]
witnesses that \( \WF\sb 2 \times \IF\sb 2 \leqW \Range\sb { \lambda = a } \).

\item[\ref{th:hardness-c}] 
Theorem~\ref{th:firstreduction} shows that \( \WF\sb 2 \leqW \Spongy \) for any continuous function \( c \), and Theorem~\ref{th:secondreduction} shows that \( \IF\sb 2 \leqW \Spongy \).
\qedhere
\end{enumerate}
\end{proof}

\begin{question}
Is \( \Spongy \) \( 2 \)-\( \bSigma\sp {1}\sb {1} \)-complete?
\end{question}

The next result summarizes the content of parts~\ref{th:hardness-a-4} and~\ref{th:hardness-b} of Theorem~\ref{th:hardness}, and Theorem~\ref{th:uniformity}.

\begin{theorem}\label{th:range}
Let \( S \subseteq ( 0 ; 1 ) \) be \( \bSigma\sp {1}\sb {1} \).
Then \( \Range ( S ) \cap \KK \) is 
\begin{itemize}
\item
\( \bPi\sp {1}\sb {1} \)-complete, if \( S = \emptyset \),
\item
\( 2 \)-\( \bSigma\sp {1}\sb {1} \)-complete, if \( S \neq \emptyset \) is countable,
\item
\( \bPi\sp {1}\sb {2} \)-complete, if \( S \) is uncountable.
\end{itemize}
\end{theorem}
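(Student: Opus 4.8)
The plan is to handle the three cases separately. The upper bounds have already been isolated in Theorem~\ref{th:upperbound}, and two of the three hardness statements are literally earlier results, so the only genuinely new argument concerns the case \( S = \emptyset \).

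For \( S \) uncountable there is nothing to do: the assertion that \( \Range ( S ) \cap \KK \) is \( \bPi\sp{1}\sb{2} \)-complete is exactly Theorem~\ref{th:uniformity}. Likewise, for \( \emptyset \neq S \) countable the assertion that \( \Range ( S ) \cap \KK \) is \( 2 \)-\( \bSigma\sp{1}\sb{1} \)-complete is the final clause of Theorem~\ref{th:hardness}\ref{th:hardness-b}, whose upper bound comes from Theorem~\ref{th:upperbound}\ref{th:upperbound-b} and whose hardness is the product reduction from \( \WF\sb 2 \times \IF\sb 2 \) exhibited there.

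The case \( S = \emptyset \) is where I would do the actual reasoning, reducing it to the already-established completeness of \( \qDual \). The key observation is that, on compact sets, \( \Range ( \emptyset ) \) is just quasi-dualisticity together with a measure constraint. Indeed, for \( K \in \KK \) one has \( 0 , 1 \in \ran \density\sb K \) iff \( 0 < \mu ( K ) < 1 \): if \( 0 < \mu ( K ) < 1 \) the Lebesgue density theorem produces a point of density \( 1 \) in \( \Phi ( K ) \) and a point of density \( 0 \) in \( \Phi ( K\sp\complement ) \), both of which have positive measure; conversely a compact null set has \( \ran \density\sb K \subseteq \set{ 0 } \), while a compact set of full measure must equal \( \pre{\omega}{2} \), whose density is constantly \( 1 \). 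Since quasi-dualisticity says \( \ran \density\sb K \subseteq \set{ 0 , 1 } \), this yields
\[
\Range ( \emptyset ) \cap \KK = ( \qDual \cap \KK ) \cap \setofLR{ K \in \KK }{ 0 < \mu ( K ) < 1 } .
\]
The right-hand factor is Borel (recall \( \mu \colon \KK \to [ 0 ; 1 ] \) is Borel, being upper semicontinuous), and \( \qDual \cap \KK \) is \( \bPi\sp{1}\sb{1} \) by Theorem~\ref{th:hardness}\ref{th:hardness-a-4}; hence \( \Range ( \emptyset ) \cap \KK \) is \( \bPi\sp{1}\sb{1} \).

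For the matching hardness I would recycle the reduction \( \firstreduction\sb{\mathbf{c} ( r )} \) used in Theorem~\ref{th:hardness}\ref{th:hardness-a-1}, fixing any \( r \in ( 0 ; 1 ) \). For every \( T \in \PrTr\sb 2 \) the set \( \firstreduction\sb{\mathbf{c} ( r )} ( T ) \) is a closed offspring built from clopen \( D\sb t \) of dyadic measure strictly between \( 0 \) and \( 1 \); since the offspring carries positive measure on its flagged pieces while leaving a positive-measure complement inside each flag neighbourhood, one gets \( 0 < \mu ( \firstreduction\sb{\mathbf{c} ( r )} ( T ) ) < 1 \). As \( T \in \WF\sb 2 \iff \firstreduction\sb{\mathbf{c} ( r )} ( T ) \in \SHARP\sb 0 = \qDual \), the displayed equality gives \( T \in \WF\sb 2 \iff \firstreduction\sb{\mathbf{c} ( r )} ( T ) \in \Range ( \emptyset ) \), so \( \WF\sb 2 \leqW \Range ( \emptyset ) \cap \KK \) and the latter is \( \bPi\sp{1}\sb{1} \)-hard. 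The only delicate point in the whole argument is this measure bookkeeping for the \( S = \emptyset \) case; all the substantive work has been done in the cited theorems.
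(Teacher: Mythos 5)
Your proposal is correct and takes essentially the same route as the paper: Theorem~\ref{th:range} is stated there as a pure summary, with the uncountable case being exactly Theorem~\ref{th:uniformity}, the countable nonempty case being the corresponding clause of Theorem~\ref{th:hardness}\ref{th:hardness-b}, and the empty case being attributed to Theorem~\ref{th:hardness}\ref{th:hardness-a-4}. Your one substantive addition --- the identity \( \Range ( \emptyset ) \cap \KK = ( \qDual \cap \KK ) \cap \setof{ K \in \KK }{ 0 < \mu ( K ) < 1 } \), the Borelness of the measure constraint, and the check that \( \firstreduction\sb{\mathbf{c} ( r )} ( T ) \) always has measure strictly between \( 0 \) and \( 1 \) so that the hardness reduction for \( \qDual \) transfers verbatim --- is exactly the bridging detail the paper leaves implicit (since \( \Range ( \emptyset ) \) and \( \qDual \) are not literally the same set), and your handling of it is correct.
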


\begin{corollary}
The quasi-order \( \preceq \) on \( \KK \) defined by
\[
 K \preceq H \IFF \ran ( \density\sb {K} ) \subseteq \ran ( \density\sb {H} ) 
\]
is \( \varPi\sp 1\sb 2 \setminus \bSigma\sp {1}\sb {2} \).
Similarly the induced equivalence relation
\[
 K \sim H \IFF \ran ( \density\sb {K} ) = \ran ( \density\sb {H} ) 
\]
is \( \varPi\sp 1\sb 2 \setminus \bSigma\sp {1}\sb {2} \), and its equivalence classes are either \( \bPi\sp {1}\sb {2} \)-complete or \( 2 \)-\( \bSigma\sp {1}\sb {1} \)-complete, with the exception of a single class that is \( \bPi\sp {1}\sb {1} \)-complete.
\end{corollary}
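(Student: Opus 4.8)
The plan is to view \( \preceq \) as a containment relation between the analytic sections of the universal set \( \mathcal{U} \) of Theorem~\ref{th:universalSigma11}, and then to read the complexity of the individual classes directly off Theorems~\ref{th:range} and~\ref{th:uniformity}, which already carry the substantive content.

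First I would pin down the upper bound for \( \preceq \). The key point is that \( \ran \density\sb K \) is determined by its trace on \( ( 0 ; 1 ) \) together with a Borel rule for the two endpoints: for compact \( K \) one has \( 0 \in \ran \density\sb K \iff K \neq \pre{\omega}{2} \) and \( 1 \in \ran \density\sb K \iff \mu ( K ) > 0 \), both Borel conditions (the latter \( \bSigma\sp {0}\sb {2} \) by Lemma~\ref{lem:measureofcompactsinCantor}), while \( \ran \density\sb K \cap ( 0 ; 1 ) = \vsection{\mathcal{U}}{K} \) is \( \varSigma\sp {1}\sb {1} \). Hence \( K \preceq H \) is the conjunction of the two Borel endpoint implications with the inclusion \( \vsection{\mathcal{U}}{K} \subseteq \vsection{\mathcal{U}}{H} \). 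Writing the inclusion as \( \neg \EXISTS{ r } ( ( K , r ) \in \mathcal{U} \wedge ( H , r ) \notin \mathcal{U} ) \), the matrix is a conjunction of a \( \varSigma\sp {1}\sb {1} \) and a \( \varPi\sp {1}\sb {1} \) relation, hence \( \varSigma\sp {1}\sb {2} \); the real existential quantifier keeps it \( \varSigma\sp {1}\sb {2} \), and its negation is \( \varPi\sp {1}\sb {2} \). Since \( \varPi\sp {1}\sb {2} \) is closed under intersection with Borel sets, \( \preceq \) is \( \varPi\sp {1}\sb {2} \), and it is lightface because \( \mathcal{U} \) is \( \varSigma\sp {1}\sb {1} \).

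For the lower bound I would exhibit a section of maximal complexity. By Theorem~\ref{th:Sigma11solid} fix a solid \( H\sb 0 \) with \( \ran \density\sb {H\sb 0} = [ 0 ; 1 ] \). As \( \ran \density\sb K \subseteq [ 0 ; 1 ] \) always holds, \( H\sb 0 \preceq K \iff [ 0 ; 1 ] \subseteq \ran \density\sb K \iff \ran \density\sb K = [ 0 ; 1 ] \), so the section \( \setof{ K \in \KK }{ H\sb 0 \preceq K } \) equals \( \setof{ K }{ \ran \density\sb K = [ 0 ; 1 ] } \), which is \( \bPi\sp {1}\sb {2} \)-complete by Theorem~\ref{th:uniformity}. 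Thus \( \preceq \) is \( \bPi\sp {1}\sb {2} \)-hard, hence not \( \bSigma\sp {1}\sb {2} \), and with the upper bound this yields \( \preceq \in \varPi\sp {1}\sb {2} \setminus \bSigma\sp {1}\sb {2} \). For \( \sim \) note \( K \sim H \iff K \preceq H \wedge H \preceq K \), so \( \sim \in \varPi\sp {1}\sb {2} \); the same set \( \setof{ K }{ H\sb 0 \sim K } = \setof{ K }{ \ran \density\sb K = [ 0 ; 1 ] } \) witnesses \( \bPi\sp {1}\sb {2} \)-hardness, giving \( \sim \in \varPi\sp {1}\sb {2} \setminus \bSigma\sp {1}\sb {2} \).

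Finally I would classify the classes. The \( \sim \)-class of \( K \) is exactly \( \Range ( S ) \cap \KK \) with \( S = \ran \density\sb K \cap ( 0 ; 1 ) \) analytic, at least whenever \( 0 < \mu ( K ) < 1 \) so that both endpoints lie in the range; Theorem~\ref{th:range} then gives \( \bPi\sp {1}\sb {2} \)-completeness when \( S \) is uncountable, \( 2 \)-\( \bSigma\sp {1}\sb {1} \)-completeness when \( \emptyset \neq S \) is countable, and \( \bPi\sp {1}\sb {1} \)-completeness precisely for the single class \( S = \emptyset \), i.e.\ \( \ran \density\sb K = \set{ 0 , 1 } \). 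The main obstacle here is bookkeeping rather than a new idea: the two degenerate ranges \( \set{ 0 } \) (the null compacts \( \setof{ K }{ \mu ( K ) = 0 } \), which is \( \bPi\sp {0}\sb {2} \)-complete) and \( \set{ 1 } \) (the singleton \( \set{ \pre{\omega}{2} } \)) fall outside the stated trichotomy, so the statement is to be read as concerning the classes of compact sets with \( 0 < \mu ( K ) < 1 \). Granting this, the exceptional \( \bPi\sp {1}\sb {1} \)-complete class is unique because \( \Range ( \emptyset ) \) is the only achievable nondegenerate range with empty trace, and every remaining class splits into the \( \bPi\sp {1}\sb {2} \)-complete and \( 2 \)-\( \bSigma\sp {1}\sb {1} \)-complete families according to whether its trace \( S \) is uncountable or a nonempty countable analytic set.
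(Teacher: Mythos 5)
Your proposal is correct and is essentially the paper's own argument: the failure of \( \bSigma\sp{1}\sb{2} \)-ness comes, exactly as in the paper, from the fact that the section of \( \preceq \) (or of \( \sim \)) at a compact set with density range \( [ 0 ; 1 ] \) equals \( \setof{ K \in \KK }{ \ran ( \density\sb{K} ) = [ 0 ; 1 ] } \), which is \( \bPi\sp{1}\sb{2} \)-complete by Theorem~\ref{th:uniformity}, and the classification of the equivalence classes is read off Theorem~\ref{th:range}. The only real difference is in the upper bound: the paper avoids your endpoint bookkeeping by quantifying over points rather than values, writing \( K \preceq H \IFF \FORALL{z\sb 1} \bigl ( \oscillation\sb{K} ( z\sb 1 ) = 0 \implies \EXISTS{z\sb 2} ( \density\sb{K} ( z\sb 1 ) = \density\sb{H} ( z\sb 2 ) ) \bigr ) \), which is visibly \( \varPi\sp 1\sb 2 \); your route through the universal set \( \mathcal{U} \) needs the two Borel endpoint conditions precisely because \( \mathcal{U} \) only records values in \( ( 0 ; 1 ) \), but it is equally valid and the endpoint characterizations you give are correct. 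Finally, your caveat about the two degenerate classes---\( \setof{ K \in \KK }{ \mu ( K ) = 0 } \) with range \( \set{ 0 } \), a \( \bPi\sp{0}\sb{2} \)-complete class, and the singleton class \( \set{ \pre{\omega}{2} } \) with range \( \set{ 1 } \)---is well taken: these genuinely fall outside the stated trichotomy, the paper's proof is silent about them, and your reading of the statement as concerning the classes of compact sets with \( 0 < \mu ( K ) < 1 \) is the correct way to make it accurate.
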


\begin{proof}
 \( K\sb 1 \preceq K\sb 2 \IFF \FORALL{z\sb 1 } \left ( \oscillation\sb {K\sb 1} ( z\sb 1 ) = 0 \implies \EXISTS{z\sb 2 } \left ( \density\sb {K\sb 1} ( z\sb 1 ) = \density\sb {K\sb 2} ( z\sb 2 ) \right ) \right ) \), so \( \preceq \) is \( \varPi\sp {1}\sb {2} \). 
If \( K \) is such that \( \ran \density\sb K = [ 0 ; 1 ] \), then \( \setofLR{ H \in \KK }{ K \preceq H } \) is \( \bPi\sp {1}\sb {2} \)-complete, so \( \preceq \) is not \( \bSigma\sp {1}\sb {2} \).
The argument for \( \sim \) is analogous.
\end{proof}

If \( \mathcal{C} \subseteq \MEAS \) is a collection of sets such that \( \mathcal{C} \cap \KK \) is \( \bGamma \)-hard for certain projective pointclasses \( \bGamma \), then \( \widehat{\mathcal{C}} = \setof{ \eq{A} \in \MALG }{ A \in \mathcal{C} } \) is Borel-\( \bGamma \)-hard, since the map \( j \) of~\eqref{eq:embeddingKintoMALG} is Borel.

Therefore we have at once the following result.

\begin{corollary}\label{cor:hardness}
Let \( n < \omega \).
\begin{enumerate-(a)}
\item
The following subsets of \( \MALG \) are \( \bPi\sp {1}\sb {1} \)-complete:
\begin{itemize}
\item
 \( \widehat{\SHARP}\sb { n } \), \( \widehat{\SHARP}\sb { \leq n } \), \( \widehat{\SHARP}\sb { < \omega } \), \( \widehat{\SHARP}\sb { \leq \omega } \), \( \widehat{\SHARP}\sb { \omega } \),
\item
 \( \widehat{\BLR}\sb { n} \), \( \widehat{\BLR}\sb {\leq n} \), \( \widehat{\BLR}\sb { < \omega } \), \( \widehat{\BLR}\sb { \leq \omega } \), \( \widehat{\BLR}\sb { \omega } \),
\item
 \( \widehat{\Range} \sb { \leq n } \), \( \widehat{\Range} \sb { < \omega } \), \( \widehat{\Range }\sb { \leq \omega } \), \( \widehat{\Range}\sb {\mathrm{inj}} \), \( \widehat{\Range} \sb {\MGR} \), \( \widehat{\Range} \sb { \lambda \leq a } \) for any \( a \in \cointerval{ 0 }{ 1 } \) and \( \widehat{\Range} \sb { \lambda < a } \) for any \( a \in \ocinterval{ 0 }{ 1 } \),
\item
 \( \widehat{\Solid} \), \( \widehat{\qDual} \), and \( \widehat{\Dual} \).
\end{itemize}
\item
The following subsets of \( \MALG \) are Borel-\( 2 \)-\( \bSigma\sp {1}\sb {1} \)-complete:
\begin{itemize}
\item
\( \widehat{ \Range } \sb { n + 1 } \), \( \widehat{ \Range }\sb { \omega } \), \( \widehat{ \Range } ( S ) \) and \( \widehat{ \Range }\sb {\lambda = a } \) with \( a \in ( 0 ; 1 ) \) and \( S \neq \emptyset \) countable,
\item
\( \widehat{\Spongy} \) is \( 2 \)-\( \bSigma\sp {1}\sb {1} \) and it is both \( \bSigma\sp {1}\sb {1} \)-hard and \( \bPi\sp {1}\sb {1} \)-hard.
\end{itemize}
\item
\( \widehat{ \Range } ( S ) \) is Borel-\( \bPi\sp {1}\sb {2} \)-complete if \( S \subseteq ( 0 ; 1 ) \) is uncountable and analytic.
\end{enumerate-(a)}
\end{corollary}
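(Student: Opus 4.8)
The plan is to exploit the Borel map $j \colon \KK \to \MALG$, $K \mapsto \eq{K}$ of~\eqref{eq:embeddingKintoMALG} to transport every hardness result already established for $\KK$ in Theorems~\ref{th:hardness} and~\ref{th:uniformity} up to $\MALG$, while reading off the matching membership bounds from Theorem~\ref{th:upperbound}. The mechanism is the observation recorded just before the statement: each family $\mathcal{C}$ occurring here is $=\sb \mu$-invariant, so $j\sp {-1}( \widehat{\mathcal{C}} ) = \mathcal{C} \cap \KK$, and hence if $f \colon Z \to \KK$ is a continuous reduction witnessing that $\mathcal{C} \cap \KK$ is $\bGamma$-hard for some $\bGamma$-set $B$ (so $B = f\sp {-1}(\mathcal{C} \cap \KK)$), then $j \circ f \colon Z \to \MALG$ is a \emph{Borel} reduction witnessing that $\widehat{\mathcal{C}}$ is Borel-$\bGamma$-hard, since $(j \circ f)\sp {-1}(\widehat{\mathcal{C}}) = f\sp {-1}(j\sp {-1}(\widehat{\mathcal{C}})) = f\sp {-1}(\mathcal{C} \cap \KK) = B$. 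Because $j$ is only Baire class $1$, this composition necessarily degrades continuous reductions into Borel ones, which is exactly why Borel-completeness is the natural output.

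First I would dispose of the upper bounds, which require no new work: every family listed in (a), (b), and (c) was already classified, as a subset of $\MALG$, by Theorem~\ref{th:upperbound} --- the $\bPi\sp 1\sb 1$ bounds of~\ref{th:upperbound-a}, the $2$-$\bSigma\sp 1\sb 1$ bounds of~\ref{th:upperbound-b} (including $\widehat{\Spongy}$), and the $\bPi\sp 1\sb 2$ bound for $\widehat{\Range}(S)$ from~\ref{th:upperbound-c}. So in each case it remains only to establish hardness. For part (a) I would take, family by family, the continuous reduction witnessing $\bPi\sp 1\sb 1$-hardness inside $\KK$ supplied by Theorem~\ref{th:hardness}\ref{th:hardness-a}, compose it with $j$ to obtain Borel-$\bPi\sp 1\sb 1$-hardness in $\MALG$, and combine with the $\bPi\sp 1\sb 1$ upper bound to get Borel-$\bPi\sp 1\sb 1$-completeness; then I invoke Kechris's theorem~\cite{Kechris:1997qy}, valid in $\ZFC$ at the first projective level, to upgrade Borel-$\bPi\sp 1\sb 1$-completeness to genuine $\bPi\sp 1\sb 1$-completeness.

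Parts (b) and (c) proceed identically through composition with $j$, now starting from the $2$-$\bSigma\sp 1\sb 1$-hardness reductions of Theorem~\ref{th:hardness}\ref{th:hardness-b} and the $\bPi\sp 1\sb 2$-hardness reduction of Theorem~\ref{th:uniformity}, respectively. Here, however, the target pointclasses lie above the first projective level, so Kechris's $\ZFC$ upgrade is unavailable and the conclusions stay at the Borel-complete level, precisely as asserted. For $\widehat{\Spongy}$ the separate $\bSigma\sp 1\sb 1$- and $\bPi\sp 1\sb 1$-hardness of $\Spongy \cap \KK$ from Theorem~\ref{th:hardness}\ref{th:hardness-c} transfer through $j$ to Borel-$\bSigma\sp 1\sb 1$- and Borel-$\bPi\sp 1\sb 1$-hardness; since these are again first-level notions, applying Kechris's theorem to a fixed $\bSigma\sp 1\sb 1$-complete (respectively $\bPi\sp 1\sb 1$-complete) set recovers genuine continuous $\bSigma\sp 1\sb 1$- and $\bPi\sp 1\sb 1$-hardness.

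The only delicate point, and the one I expect to be the crux, is the upgrade step, and it is entirely localized to the first projective level. The transfer through $j$ is routine once invariance is noted, but it produces only Borel reductions; it is the appeal to Kechris's theorem that converts Borel-$\bPi\sp 1\sb 1$-completeness (and first-level hardness) into the sharp $\bPi\sp 1\sb 1$-completeness of (a) and the continuous hardness claimed for $\widehat{\Spongy}$, whereas at $2$-$\bSigma\sp 1\sb 1$ and $\bPi\sp 1\sb 2$ no such upgrade is available in $\ZFC$ and none is claimed. Beyond this, I anticipate no computation other than checking, for each family, that the reduction exhibited in the $\KK$-theorems lands in the correct family after passing to $\MALG$, which is immediate from $=\sb \mu$-invariance.
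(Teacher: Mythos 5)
Your proposal follows the paper's own proof almost verbatim: the membership bounds are read off Theorem~\ref{th:upperbound}; hardness is transported from \( \KK \) to \( \MALG \) by composing the continuous reductions of Theorems~\ref{th:hardness} and~\ref{th:uniformity} with the Borel map \( j \) of~\eqref{eq:embeddingKintoMALG}, using \( =\sb{\mu} \)-invariance of each family so that \( j\sp{-1} ( \widehat{\mathcal{C}} ) = \mathcal{C} \cap \KK \); and in part (a) Kechris's theorem~\cite{Kechris:1997qy} upgrades Borel-\( \bPi\sp{1}\sb{1} \)-completeness to \( \bPi\sp{1}\sb{1} \)-completeness, which is legitimate there because, by Theorem~\ref{th:upperbound}, those sets \emph{are} \( \bPi\sp{1}\sb{1} \). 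Parts (b) (first bullet) and (c) are likewise handled exactly as in the paper, with the conclusions correctly left at the Borel level.

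The one step that fails is your upgrade of the hardness of \( \widehat{\Spongy} \) to continuous hardness. Kechris's theorem is about sets that themselves belong to \( \bSigma\sp{1}\sb{1} \) (respectively \( \bPi\sp{1}\sb{1} \)): such a set, if Borel-hard for its class, is continuously hard. It cannot be ``applied to a fixed \( \bSigma\sp{1}\sb{1} \)-complete set \( C \)'': from \( C \leq\sb{\mathrm{B}} \widehat{\Spongy} \) together with the continuous completeness of \( C \), composition still produces only \emph{Borel} reductions of arbitrary \( \bSigma\sp{1}\sb{1} \) sets into \( \widehat{\Spongy} \) (Borel followed by continuous is Borel); the implication you need concerns reductions \emph{into} \( \widehat{\Spongy} \), and therefore requires a hypothesis on \( \widehat{\Spongy} \) itself. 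That hypothesis fails: since \( \widehat{\Spongy} \) is Borel-hard for both \( \bSigma\sp{1}\sb{1} \) and \( \bPi\sp{1}\sb{1} \), it belongs to neither class (a Borel preimage of a \( \bSigma\sp{1}\sb{1} \) set is \( \bSigma\sp{1}\sb{1} \), so a \( \bSigma\sp{1}\sb{1} \) set cannot be Borel-\( \bPi\sp{1}\sb{1} \)-hard, and dually), and it is only \( 2 \)-\( \bSigma\sp{1}\sb{1} \), a pointclass to which Kechris's \( \ZFC \) result does not extend. What the transfer through \( j \) actually yields---and all the paper's ``we have at once'' derivation delivers---is Borel-\( \bSigma\sp{1}\sb{1} \)- and Borel-\( \bPi\sp{1}\sb{1} \)-hardness of \( \widehat{\Spongy} \), which is how that clause should be read. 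A genuinely continuous hardness statement would need a different argument, for instance checking that the relevant reductions \( T \mapsto \eq{ K\sb{T} } \) are in fact continuous as maps into \( \MALG \) (plausible, since trees agreeing up to length \( n \) produce offsprings whose symmetric difference has measure at most \( 2\sp{n+1} \cdot 2\sp{-\tau\sb{n+1}} \)), or determinacy hypotheses beyond \( \ZFC \); your appeal to Kechris does neither.
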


Let \( X \) be an uncountable Polish space.
Recall that a pointclass \( \bGamma \) has the separation property if every pair of disjoint nonempty sets \( A, B \subseteq X \) in \( \bGamma \) can be separated by a set in \( \Delta ( \bGamma ) \equalsdef \bGamma \cap \dual{ \bGamma} \).
Assuming enough determinacy, for every \( \bGamma \) exactly one among \( \bGamma \) and \( \dual{\bGamma} \) has the separation property~\cite{Steel:1981lh}.
It can be shown in \( \ZFC \) that \( \bSigma\sp {1}\sb {1} \) and \( \bPi\sp {1}\sb {2} \) have the separation property.
The pointclass \( 2 \)-\( \bSigma\sp {1}\sb {1} \) does not have the separation property, since it has the pre-well-ordering property, as observed by John Steel (personal communication).

Nevertheless some of the \( \bPi\sp {1}\sb {1} \) and \( 2 \)-\( \bSigma\sp {1}\sb {1} \) sets considered in this paper \emph{can} be separated:
\begin{itemize}
\item
if \( 0 < a < b \leq 1 \) then \( \widehat{ \Range }\sb { \lambda = a } \subseteq \widehat{ \Range }\sb {\lambda \leq a } \) and \( \widehat{ \Range }\sb { \lambda \leq a } \cap \widehat{ \Range }\sb { \lambda = b } = \emptyset \), so by Theorem~\ref{th:upperbound} \( \widehat{ \Range }\sb { \lambda = a } , \widehat{ \Range }\sb { \lambda = b } \in 2 \text{-} \bSigma\sp {1}\sb {1} \) are separated by a set in \( \bPi\sp {1}\sb {1} \subset \Delta ( 2 \text{-} \bSigma\sp {1}\sb {1} ) \);
\item
suppose \( S\sb 1 , S\sb 2 \subseteq ( 0 ; 1 ) \) are analytic and distinct; without loss of generality we may assume that there is \( r \in S\sb 2 \setminus S\sb 1 \).
Let \( B \) be Borel and such that \( S\sb 1 \subseteq B \) and \( r \notin B \).
Then \( \widehat{ \Range } ( S\sb 1 ) \subseteq \widehat{ \Range } ( { \subseteq } B ) \) and \( \widehat{ \Range } ( { \subseteq } B ) \cap \widehat{ \Range } ( S\sb 2 ) = \emptyset \), and by Lemma~\ref{lem:rangecontainment} \( \widehat{ \Range } ( { \subseteq } B ) \in \bPi\sp {1}\sb {1} \).
\end{itemize}

A collection of sets \( \setofLR{A\sb i}{ i \in I } \) in \( \bGamma \) is said to be \markdef{\( \Delta ( \bGamma ) \)-inseparable} if they are pairwise disjoint and for every \( i \neq j \) there is no set in \( \Delta ( \bGamma ) \) that separates \( A\sb i \) from \( A\sb j \).
The collections \( \setof{ \widehat{\SHARP}\sb {n} }{ 1 \leq n < \omega } \) and \( \setof{ \widehat{\BLR}\sb {n} }{ 1 \leq n < \omega} \) are \( \bPi\sp {1}\sb {1} \)-inseparable in \( \MALG \).
In fact a stronger result holds.
For \( H \in \KK \), let
\begin{align*}
\SHARP\sp H = \setofLR{ K \in \KK }{ \Sharp ( K ) \backsimeq H } & , & \BLR\sp H = \setof{ K\in \KK }{ \Blur ( K ) \backsimeq H }
\end{align*}
where \( \backsimeq \) is the \markdef{homeomorphism relation}.

\begin{theorem} \label{th:SHARPcomplete}
Both \( \SHARP\sp H \) and \( \BLR\sp H \) are \( \bPi\sp 1\sb 1 \)-complete subsets of \( \KK \).
\end{theorem}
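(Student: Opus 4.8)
The plan is to prove the two pointclass bounds separately, with the \( \bPi\sp 1\sb 1 \)-hardness argument being the heart of the matter and, pleasantly, uniform in \( H \).

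\textbf{Upper bound.} I would derive \( \SHARP\sp H , \BLR\sp H \in \bPi\sp 1\sb 1 \) from Theorem~\ref{th:sectionKechris}. Take \( X = \KK \), \( Y = \pre{\omega}{2} \), and \( A = \setofLR{ ( K , z ) }{ z \in \Sharp ( K ) } \); since \( \oscillation , \upperdensity , \lowerdensity \) are Borel on \( \pre{\omega}{2} \times \KK \), the set \( A \) is Borel (its sections \( \Sharp ( K ) \) are uniformly \( \bPi\sp 0\sb 3 \)). The key point is that \( B = \setofLR{ C \in \KK }{ C \backsimeq H } \) is a \emph{Borel} subset of \( \KK = \KK ( \pre{\omega}{2} ) \): the homeomorphism class of a fixed compact metrizable space is Borel (by Stone duality this is the isomorphism class of the countable Boolean algebra of clopen subsets of \( H \), which is Borel by Scott's isomorphism theorem). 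Theorem~\ref{th:sectionKechris} then gives that \( \SHARP\sp H = \setofLR{ K }{ \vsection{A}{K} \in B } \) is coanalytic; note that \( \vsection{A}{K} \in B \) already forces \( \Sharp ( K ) \) to be compact, exactly as needed since \( H \) is. Running the same argument with \( A = \setofLR{ ( K , z ) }{ z \in \Blur ( K ) } \) (uniformly \( \bSigma\sp 0\sb 3 \)) yields \( \BLR\sp H \in \bPi\sp 1\sb 1 \).

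\textbf{Hardness: realizer and reduction.} For hardness I would reduce \( \WF\sb 2 \), which is \( \bPi\sp 1\sb 1 \)-complete (see the discussion around~\eqref{eq:Brarecomplete}). First fix a realizer: writing \( H \backsimeq \body{U\sb 0} \) for a finitely branching pruned tree \( U\sb 0 \) on \( \omega \), put \( T\sb 0 = {\downarrow} \setof{ \check{u} }{ u \in U\sb 0 } \), so that \( \boldsymbol{h} \) maps \( \body{U\sb 0} \) homeomorphically onto the compact set \( \body{T\sb 0} \cap \mathcal{N} \backsimeq H \). With \( c \colon \mathcal{N} \to ( 0 ; 1 ) \) continuous and injective, Theorem~\ref{th:firstreduction} gives \( \Sharp ( \firstreduction\sb c ( T\sb 0 ) ) = \setof{ \overline{x} }{ x \in \body{T\sb 0} \cap \mathcal{N} } \backsimeq H \); call this set \( K\sb H \) (for \( H = \emptyset \) take \( K\sb H \) clopen). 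Now introduce the continuous grafting map \( R \colon \PrTr\sb 2 \to \PrTr\sb 2 \), \( R ( T ) = {\downarrow} \setof{ 0\sp {( n )} \conc 1 \conc v }{ n \in \omega , \ v \in T } \), which plants a copy of \( T \) below each node \( 0\sp {( n )} \conc 1 \) while retaining the branch \( 0\sp {( \omega )} \notin \mathcal{N} \). Set \( G ( T ) = \firstreduction\sb c ( R ( T ) ) \) and \( F ( T ) = 0 \conc K\sb H \cup 1 \conc G ( T ) \). Since densities are computed locally inside the clopen pieces \( \Nbhd\sb 0 , \Nbhd\sb 1 \), one has \( \Sharp ( F ( T ) ) = 0 \conc \Sharp ( K\sb H ) \cup 1 \conc \Sharp ( G ( T ) ) \). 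If \( T \in \WF\sb 2 \) then \( \body{R ( T )} \cap \mathcal{N} = \emptyset \), so \( \Sharp ( G ( T ) ) = \emptyset \) and \( \Sharp ( F ( T ) ) = 0 \conc \Sharp ( K\sb H ) \backsimeq H \). If \( T \in \IF\sb 2 \), fix \( y\sp * \in \body{T} \cap \mathcal{N} \); then the sharp points \( \overline{ 0\sp {( n )} \conc 1 \conc y\sp * } \) converge to \( \overline{ 0\sp {( \omega )} } \), which is blurry (as \( 0\sp {( \omega )} \notin \mathcal{N} \)), so \( \Sharp ( G ( T ) ) \) is not closed, hence not compact. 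As \( 1 \conc \Sharp ( G ( T ) ) \) is relatively clopen in \( \Sharp ( F ( T ) ) \), the latter is non-compact and so \( \not\backsimeq H \). Thus \( T \in \WF\sb 2 \iff F ( T ) \in \SHARP\sp H \), witnessing \( \WF\sb 2 \leqW \SHARP\sp H \). Replacing \( \firstreduction\sb c \) by \( \secondreduction \) (Theorem~\ref{th:secondreduction}) and \( K\sb H \) by a \( \secondreduction \)-realizer of \( H \) as a \emph{blurry} set yields \( \WF\sb 2 \leqW \BLR\sp H \); here the accumulation point \( \overline{ 0\sp {( \omega )} } \) has density in \( \set{ 0 , 1 } \), hence again lies outside the blurry set, and the non-compactness argument is identical.

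\textbf{The main obstacle.} The difficulty is to make hardness work \emph{uniformly across all} compact \( H \). The naive strategy — attach a tree-dependent piece disjointly and detect ill-foundedness through a change of homeomorphism type — breaks down for self-similar \( H \) (a Cantor set, or any space absorbing convergent sequences), where adding a disjoint perfect set or a disjoint copy of \( \omega + 1 \) preserves the homeomorphism type. The device above circumvents this entirely by detecting ill-foundedness through \emph{non-compactness} of \( \Sharp \) (resp.\ \( \Blur \)): the grafting \( R \) forces the exceptional points contributed by \( T \) to accumulate onto a non-exceptional point precisely when \( T \in \IF\sb 2 \). Because compactness is a homeomorphism invariant that no choice of \( H \) can defeat, this replaces delicate topological bookkeeping by a single robust dichotomy; the only genuinely external ingredient is the Borelness of the homeomorphism class, needed only for the upper bound.
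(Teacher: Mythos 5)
Your proposal is correct and is essentially the paper's own proof: the upper bound is obtained exactly as in the paper (Theorem~\ref{th:sectionKechris} applied to the Borel sets \( \setof{ ( K , z ) }{ z \in \Sharp ( K ) } \) and \( \setof{ ( K , z ) }{ z \in \Blur ( K ) } \) together with the Borelness of the homeomorphism class of \( H \) in \( \KK \)), and your hardness argument uses the same mechanism as the paper's --- a disjoint realizer whose sharp (resp.\ blurry) set is a copy of \( H \), plus countably many copies of the \( T \)-dependent part whose exceptional points accumulate at a non-exceptional point precisely when \( T \in \IF\sb 2 \), so that ill-foundedness is detected through non-compactness of \( \Sharp \) (resp.\ \( \Blur \)). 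The only difference is where the accumulating copies are built: you graft them at the tree level via \( R \) before applying \( \firstreduction\sb c \) or \( \secondreduction \), whereas the paper forms \( \bigcup\sb {n} 0\sp { ( n + 1 ) } \conc 1\sp { ( n ) } \conc \firstreduction\sb c ( T ) \cup 1 \conc \firstreduction\sb c ( U\sp \star ) \) directly in \( \KK \); this is a cosmetic variation (your version even makes compactness of the reduction's values automatic).
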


\begin{proof}
The homeomorphism classes in \( \KK \) are Borel, see \emph{e.g.}~\cite{Camerlo:2001qf}.
As \( \setof{ ( K , z ) \in \KK \times \pre{\omega}{2} }{ z \in \Sharp ( K ) } \) and \( \setof{ ( K , z ) \in \KK \times \pre{\omega}{2} }{ z \in \Blur ( K ) } \) are Borel, Theorem~\ref{th:sectionKechris} yields at once that \( \SHARP\sp H \) and \( \BLR\sp H \) are coanalytic.

For any \( U \) tree on \( \omega \) let
\[
U\sp \star = { \downarrow} \setof{ \check{u} }{ u \in U } \cup \setof{ \check{u} \conc 0\sp { (n) }}{ u \text{ is a terminal node of } U \text{ and } n\in\omega} .
\]
where \( \check{u} \) is as in~\eqref{eq:check(t)}.
For later use, notice that the map \( \Tr \to \PrTr\sb 2 \), \( U \mapsto U\sp \star \), is Borel.
Moreover for all \( U \in \Tr \), the map \( \boldsymbol{h} \restriction \body{ U } \) is a homeomorphism between \( \body{ U } \) and \( \body{U\sp \star } \cap \mathcal{N} \), where \( \boldsymbol{h}\) is as in~\eqref{eq:BaireinCantor0}.

Fix any continuous function \( c \colon \mathcal{N} \to ( 0 ; 1 ) \).
Since the map \( x \mapsto \overline{x} \) is injective and continuous, it is a homeomorphism onto its range, and the same is true for its restrictions.
Consequently, by Theorem~\ref{th:firstreduction} and using the function \( \firstreduction\sb c \) defined there,
\[
 \body{U} \backsimeq \body{ U\sp \star } \cap \mathcal{N} \backsimeq \Sharp ( \firstreduction\sb c ( U \sp \star ) )
\]
for every \( U \in \Tr \).
In particular, choosing \( U \) such that \( \body{U} \backsimeq H \), one has \( \firstreduction\sb c ( U\sp {\star } ) \in \SHARP\sp H \).
The map \( f \colon \PrTr \sb 2 \to \KK \) 
\[ 
f ( T ) = \bigcup\sb{n\in\omega } 0\sp{ ( n + 1 ) } \conc 1\sp{ ( n ) } \conc \firstreduction\sb c ( T ) \cup 1 \conc \firstreduction\sb{c} ( U\sp{\star } )
\]
is continuous.
Moreover, if \( T\in \WF\sb 2 \) then \( f ( T ) \in \SHARP\sp H \), while if \( T\in \IF\sb 2 \) then \( \Sharp ( f ( T ) ) \) is not compact.
This shows that \( \SHARP\sp H \) is \( \bPi\sp 1\sb 1 \)-complete.

For \( \BLR\sp H \) employ a similar argument, using \( \secondreduction \) instead of \( \firstreduction\sb c \).
\end{proof}

We can now give an example of a large collection of Borel-inseparable, complete coanalytic subsets of \( \KK \). 

\begin{theorem}\label{th:Pi11inseparable}
Let \( H , H' \in \KK \), with \( H \not\backsimeq H' \).
Then:
\begin{enumerate-(a)}
\item\label{th:Pi11inseparable-a}
the sets \( \SHARP\sp H, \SHARP\sp {H'} \) are disjoint and Borel-inseparable
\item\label{th:Pi11inseparable-b}
 the sets \( \BLR\sp H, \BLR\sp {H'} \) are disjoint and Borel-inseparable
\end{enumerate-(a)}
\end{theorem}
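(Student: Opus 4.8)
Disjointness is immediate: since \( \backsimeq \) is an equivalence relation, \( \Sharp ( K ) \backsimeq H \) together with \( \Sharp ( K ) \backsimeq H' \) would give \( H \backsimeq H' \), contrary to hypothesis; the same argument applies to \( \Blur \). The whole content is Borel-inseparability, and the plan is to reduce \ref{th:Pi11inseparable-a} and \ref{th:Pi11inseparable-b} simultaneously to a single statement about trees. Recall from the proof of Theorem~\ref{th:SHARPcomplete} that, taking any continuous \( c \colon \mathcal{N} \to ( 0 ; 1 ) \), the map \( \firstreduction\sb c \) satisfies \( \Sharp ( \firstreduction\sb c ( T ) ) = \setof{ \overline{ x } }{ x \in \body{ T } \cap \mathcal{N} } \backsimeq \body{ T } \cap \mathcal{N} \) via the homeomorphism \( x \mapsto \overline{ x } \), while \( \secondreduction \) (which is quasi-dualistic) satisfies \( \Sharp ( \secondreduction ( T ) ) = \emptyset \) and \( \Blur ( \secondreduction ( T ) ) \backsimeq \body{ T } \cap \mathcal{N} \). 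Consequently
\[
\firstreduction\sb c \sp { - 1 } ( \SHARP\sp H ) = \secondreduction \sp { - 1 } ( \BLR\sp H ) = \mathcal{H} \equalsdef \setofLR{ T \in \PrTr\sb 2 }{ \body{ T } \cap \mathcal{N} \backsimeq H } ,
\]
and likewise for \( H' \). Thus \( \firstreduction\sb c \) continuously reduces \( ( \mathcal{H} , \mathcal{H}' ) \) to \( ( \SHARP\sp H , \SHARP\sp {H'} ) \), and \( \secondreduction \) reduces it to \( ( \BLR\sp H , \BLR\sp {H'} ) \); a Borel set separating either target pair pulls back to a Borel set separating \( \mathcal{H} \) from \( \mathcal{H}' \). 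Hence it suffices to prove that
\[
\mathcal{H} \text{ and } \mathcal{H}' \text{ are Borel-inseparable in } \PrTr\sb 2 . \qquad ( \star )
\]

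To establish \( ( \star ) \) I would fix a disjoint pair \( A\sb 0 , A\sb 1 \) of \( \bPi\sp 1\sb 1 \) subsets of \( Z = \PrTr\sb 2 \times \PrTr\sb 2 \) that is Borel-inseparable — such a pair exists because \( \bPi\sp 1\sb 1 \) lacks the separation property, and an explicit one can be produced from a \( \bPi\sp 1\sb 1 \)-norm on \( \WF\sb 2 \) by comparing ranks — and construct a single continuous \( \delta \colon Z \to \PrTr\sb 2 \) with \( \delta ( A\sb 0 ) \subseteq \mathcal{H} \) and \( \delta ( A\sb 1 ) \subseteq \mathcal{H}' \); then a Borel separator of \( ( \mathcal{H} , \mathcal{H}' ) \) pulls back to one of \( ( A\sb 0 , A\sb 1 ) \), a contradiction. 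I would realize \( H \) and \( H' \) as the \( \mathcal{N} \)-branch spaces of fixed trees: choosing finitely branching pruned \( U\sb H , U\sb {H'} \) on \( \omega \) with \( \body{ U\sb H } \backsimeq H \) and \( \body{ U\sb {H'} } \backsimeq H' \), the trees \( U\sb H \sp \star , U\sb {H'} \sp \star \) (as in the proof of Theorem~\ref{th:SHARPcomplete}) have compact \( \mathcal{N} \)-branch spaces \( \body{ U\sb H \sp \star } \cap \mathcal{N} \backsimeq H \) and \( \body{ U\sb {H'} \sp \star } \cap \mathcal{N} \backsimeq H' \). The tree \( \delta ( z ) \) is then built so that, steered by the rank comparison defining \( A\sb 0 , A\sb 1 \), exactly one of the two fixed patterns is completed into \( \mathcal{N} \): on \( A\sb 0 \) the \( U\sb H \sp \star \)-branches keep meeting \( 1 \) infinitely often while the \( U\sb {H'} \sp \star \)-branches are diverted into the tail region \( \pre{\omega}{2} \setminus \mathcal{N} \), and symmetrically on \( A\sb 1 \). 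As branches landing outside \( \mathcal{N} \) do not enter \( \body{ \delta ( z ) } \cap \mathcal{N} \), they leave the homeomorphism type untouched, so \( \body{ \delta ( z ) } \cap \mathcal{N} \backsimeq H \) throughout \( A\sb 0 \) and \( \backsimeq H' \) throughout \( A\sb 1 \).

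The main obstacle is a parity obstruction that rules out the naive disjoint-union construction. For any continuous \( \delta \) the set \( \setofLR{ z }{ \body{ \delta ( z ) } \cap \mathcal{N} \neq \emptyset } = \delta \sp { - 1 } ( \IF\sb 2 ) \) is \( \bSigma\sp 1\sb 1 \); hence no single gadget can have a nonempty \( \mathcal{N} \)-branch space exactly on a properly coanalytic set, and a disjoint union \( 0 \conc \delta\sb 0 ( z ) \cup 1 \conc \delta\sb 1 ( z ) \) would, on \( A\sb 0 \), pollute the \( H \)-copy with the unwanted (and typically non-compact) branches of the \( H' \)-gadget, which is ill-founded there, thereby wrecking the homeomorphism type. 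One also cannot repair this by gating a single clean copy of \( H \) through a witness search, since uniqueness of the witness over the whole coanalytic set \( A\sb 0 \) would make \( A\sb 0 \) Borel. The resolution is to fuse the two realizations into one tree governed by the rank comparison, so that the choice between the \( H \)-pattern and the \( H' \)-pattern is made coherently along each branch and every competing branch is forced out of \( \mathcal{N} \); where needed one inflates the competitors by \( \boldsymbol{E}\sb 2 \) so they form a perfect, manifestly non-\( \mathcal{N} \) set. Checking that on each side precisely the intended compact homeomorphism type survives into \( \mathcal{N} \) — neither extra nor missing branches, and with compactness preserved — is the technical heart of the argument.
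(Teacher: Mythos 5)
Your disjointness observation and your transfer step are correct, and the transfer is the same one the paper uses: since \( \Sharp ( \firstreduction\sb c ( T ) ) \backsimeq \body{T} \cap \mathcal{N} \), and \( \Blur ( \secondreduction ( T ) ) \backsimeq \body{T} \cap \mathcal{N} \) while \( \Sharp ( \secondreduction ( T ) ) = \emptyset \), a Borel set separating \( \SHARP\sp H \) from \( \SHARP\sp {H'} \) (or \( \BLR\sp H \) from \( \BLR\sp {H'} \)) pulls back under \( \firstreduction\sb c \) (resp.\ \( \secondreduction \)) to a Borel set separating \( \mathcal{H} = \setof{ T \in \PrTr\sb 2 }{ \body{T} \cap \mathcal{N} \backsimeq H } \) from \( \mathcal{H}' \). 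So the theorem does reduce to your statement \( ( \star ) \).

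The gap is that \( ( \star ) \) is never proved, and it is the entire content of the theorem. The paper itself does not prove it: it invokes \cite[Theorem 1.4]{Camerlo:2002sf}, which states precisely that distinct homeomorphism classes \( \setof{ U \in \Tr }{ \body{U} \backsimeq H } \) are complete coanalytic and pairwise Borel-inseparable; this transfers to your \( \PrTr\sb 2 \)/\( \mathcal{N} \) formulation through the Borel map \( U \mapsto U\sp \star \) from the proof of Theorem~\ref{th:SHARPcomplete}. Your plan---build a continuous \( \delta \) sending a rank-comparison pair \( ( A\sb 0 , A\sb 1 ) \) into \( ( \mathcal{H} , \mathcal{H}' ) \)---is a reasonable strategy, but the map \( \delta \) is never defined: you describe what it must achieve (``exactly one of the two fixed patterns is completed into \( \mathcal{N} \)'') and then explicitly set aside ``the technical heart of the argument'', namely the construction itself and the verification that exactly the intended compact homeomorphism type survives into \( \mathcal{N} \) on each side. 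That is not a routine check, and the existence of such a \( \delta \) is sensitive to the choice of the source pair. For instance, no continuous \( \delta \) can send the Borel-inseparable pair \( ( \WF\sb 2 , \IF\sb 2 ) \) into \( ( \mathcal{H} , \mathcal{H}' ) \): since \( \WF\sb 2 \cup \IF\sb 2 = \PrTr\sb 2 \) and \( \mathcal{H} \cap \mathcal{H}' = \emptyset \), the preimage \( \delta\sp{-1} ( \mathcal{H}' ) \) would be exactly \( \IF\sb 2 \), yet \( \mathcal{H}' \) is \( \bPi\sp 1\sb 1 \) (by Theorem~\ref{th:sectionKechris} and Borelness of homeomorphism classes in \( \KK \)), whereas \( \IF\sb 2 \) is properly \( \bSigma\sp 1\sb 1 \). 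So the inseparability of \( ( \mathcal{H} , \mathcal{H}' ) \) rests on a genuinely nontrivial construction tailored to the pair; your proposal contains only its specification, not the construction or its verification. To complete the proof, either carry out that construction in full or, as the paper does, cite the known result.
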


\begin{proof}
Recall the Borel map \( \Tr \to \PrTr\sb 2 \), \( U \mapsto U\sp \star \) from the proof of Theorem~\ref{th:SHARPcomplete} and notice that the Borel function \( \Tr \to \KK \), \( U \mapsto \firstreduction\sb c ( U\sp \star ) \) reduces \( \setof{ U\in \Tr }{ \body{U} \backsimeq H } \) to \( \SHARP\sp H \), for any \( H \in \KK \).
Given \( H \not\backsimeq H' \), since \( \setofLR{ U\in \Tr }{ \body{U} \backsimeq H} \), \( \setofLR{ U\in \Tr }{ \body{U} \backsimeq H' } \) are complete coanalytic and Borel-inseparable by~\cite[Theorem 1.4]{Camerlo:2002sf}, the same holds for \( \SHARP\sp H , \SHARP\sp {H'} \).
This yields~\ref{th:Pi11inseparable-a}.

For~\ref{th:Pi11inseparable-b}, employ a similar argument, using the function \( \secondreduction \) from Theorem~\ref{th:secondreduction} instead of \( \firstreduction\sb c \).
\end{proof}

\printbibliography

\end{document}